\newtheorem{theorem}{Theorem}[section]
\newtheorem{lemma}[theorem]{Lemma}
\theoremstyle{definition}
\newtheorem{definition}[theorem]{Definition}
\numberwithin{equation}{section} 
\newcommand{\Rot}{\operatorname{\mathbf{curl}}}
\newcommand{\Div}{\operatorname{\mathrm{div}}}
\newcommand{\rot}{\operatorname{\mathrm{curl}}}
\newcommand{\loc}{\mathrm{loc}}
\newcommand  {\C}{{\mathbb C}}
\newcommand  {\N}{{\mathbb N}}
\newcommand  {\R}{{\mathbb R}}
\newcommand {\Id}{\mathrm {I}}
\newcommand  {\D}{\mathsf D}
\renewcommand{\SS}{\boldsymbol{\mathsf S}}
\newcommand  {\TT}{\boldsymbol{\mathsf T}}
\newcommand  {\LL}{\boldsymbol{\mathsf L}}
\newcommand  {\HH}{\boldsymbol{\mathsf H}}
\newcommand  {\EE}{\boldsymbol{\mathsf E}}
\newcommand  {\Pp}{\boldsymbol{\mathsf P}} 
\newcommand {\Tr}{\operatorname{Trace}}
\newcommand  {\hh}{{\boldsymbol{ h}}}
\newcommand  {\nn}{\boldsymbol{ n}}
\newcommand  {\uu}{\boldsymbol{ u}}
\newcommand  {\jj}{\boldsymbol{ j}}
\newcommand  {\mm}{\boldsymbol{ m}}
\newcommand  {\vv}{\boldsymbol{ v}}
\newcommand{\transposee}[1]{{#1}^{\mathsf T}}    
\newcommand{\Bk}{\color{black}}
\newcommand{\Rd}{\color{red}} 
\let\Rd\Bk      
\begin{document}

\title[Shape derivatives II: Dielectric scattering]{Shape derivatives of boundary integral operators in electromagnetic scattering. Part II: Application to  scattering by a homogeneous dielectric obstacle }
\author{Martin Costabel}
  \address{IRMAR, Institut Math\'ematique, Universit\'e de Rennes 1, 35042
    Rennes, France} \email{martin.costabel@univ-rennes1.fr}
\author{Fr\'ed\'erique Le Lou\"er}
 \address{ Institut f\"ur Numerische und Andgewandte Mathematik, Universit\"at G\"ottingen, 37083
   G\"ottingen, Germany} \email{f.lelouer@math.uni-goettingen.de }

\begin{abstract} We develop the shape  derivative analysis of solutions to the problem of scattering of time-harmonic electromagnetic waves by a penetrable bounded obstacle. Since boundary integral equations are a classical tool to solve  electromagnetic scattering problems, we study the shape differentiability properties of the standard electromagnetic boundary integral operators.  The latter  are typically bounded on the space of tangential vector fields of mixed regularity  $\TT\HH\sp{-\frac{1}{2}}(\Div_{\Gamma},\Gamma)$. Using Helmholtz decomposition, we can base their analysis on the study of  pseudo-differential integral operators in standard Sobolev spaces, but we then have to study the G\^ateaux differentiability of surface differential operators. We prove that the electromagnetic boundary integral operators are infinitely differentiable without loss of regularity. We  also give a characterization of the first  shape derivative  of the solution of the dielectric scattering problem as a solution of a new electromagnetic scattering problem. 
\end{abstract}      

\keywords{Maxwell's equations, boundary integral operators, surface differential operators, shape derivatives,  Helmholtz decomposition.}

\date{}
\maketitle

\section{Introduction}
Consider the scattering of time-harmonic electromagnetic waves by a bounded obstacle $\Omega$ in $\R^3$ with a smooth and simply connected boundary $\Gamma$ filled with an homogeneous dielectric material. This problem  is described by the system of Maxwell's equations with piecewise constant electric permittivity and magnetic permeability, valid in the sense of distributions, which implies two transmission conditions on the boundary  of the obstacle guaranteeing the continuity of the tangential components of the electric and magnetic fields across the interface.  The transmission problem is completed by the Silver--M\"uller radiation condition at infinity (see \cite{Monk} and \cite{Nedelec}).  Boundary integral equations are an efficient  method to solve such problems for low and high frequencies. The dielectric scattering  problem is usually reduced to a system of two boundary integral equations for two unknown tangential vector fields on the interface (see \cite{BuffaHiptmairPetersdorffSchwab} and \cite{Nedelec}). We refer to  \cite{CostabelLeLouer2} for methods developed by the authors to solve this problem using a single boundary integral equation.

 Optimal shape design with a goal function involving the modulus of the far field pattern of the  dielectric scattering problem has important applications, such as antenna design for telecommunication systems and radars. The analysis of shape optimization methods is based on the analysis of the dependency of the solution on the shape of the dielectric scatterer, and a local analysis involves the study of derivatives with respect to the shape. An explicit form of the shape derivatives is desirable in view of their implementation in shape optimization algorithms such as gradient methods or Newton's method.

 In this paper, we present a complete analysis of the shape differentiability of the solution of the dielectric scattering problem and of its far field pattern, using integral representations. Even if numerous works exist on the calculus of shape derivatives of various shape functionals \cite{DelfourZolesio, DelfourZolesio2,Hadamard, PierreHenrot,  Zolesio}, in the framework of boundary integral equations the scientific literature is not extensive. However, one can cite the papers  \cite{Potthast2}, \cite{Potthast1} and  \cite{Potthast3}, where  R.~Potthast has considered the question, starting with his PhD thesis \cite{Potthast4}, for the Helmholtz equation with Dirichlet or Neumann boundary conditions  and the perfect conductor problem, in spaces of continuous and H\"older continuous functions. Using the  integral representation of the solution, one is lead to study  the G\^ateaux differentiability of  boundary integral operators and potential operators with weakly singular and hypersingular  kernels.
 
  The natural space of distributions (energy space) which occurs in the electromagnetic potential theory is $\TT\HH\sp{-\frac{1}{2}}(\Div_{\Gamma},\Gamma)$, the set of tangential vector fields whose components are in the Sobolev space $H\sp{-\frac{1}{2}}(\Gamma)$ and whose surface divergence is in $H\sp{-\frac{1}{2}}(\Gamma)$. 
We face two main difficulties: 
On one hand, the solution of the scattering problem is given in terms of products of boundary integral operators and their inverses. In order 
to be able to construct shape derivatives of such products, it is not sufficient to find shape derivatives of the boundary integral operators, but it is imperative to prove that the derivatives are bounded operators between the same spaces as the boundary integral operators themselves. 
On the other hand, the very definition of shape differentiability of operators defined on the shape-dependent space $\TT\HH\sp{-\frac{1}{2}}(\Div_{\Gamma},\Gamma)$ poses non-trivial problems. Our strategy consists in using the Helmholtz decomposition of this Hilbert space which gives a representation of a tangential vector field in terms of (tangential derivatives of) two scalar potentials. In this way, we split the analysis into two steps:  First the G\^ateaux differentiability analysis of scalar boundary integral operators and potential operators with strongly and weakly singular kernels,  and second the study of shape derivatives of surface differential operators.
  
 This work contains results from the thesis \cite{FLL} where this analysis has been used to develop a shape optimization algorithm of dielectric lenses in order to obtain  a prescribed radiation pattern. 
 
This is the second of two papers on shape derivatives of boundary integral operators, the first one \cite{CostabelLeLouer} being aimed at a general theory of shape derivatives of singular integral operators appearing in boundary integral equation methods.
 
 \medskip
 
The paper is organized as follows: 

In Section \ref{BoundIntOp}  we  recall some standard results about trace mappings and regularity properties of the boundary integral operators in electromagnetism. In Section \ref{ScatProb} we define the scattering problem for time-harmonic electromagnetic waves at a dielectric interface.  We then give an integral representation of the solution --- and of the quantity of interest, namely the far field of the dielectric scattering problem --- following the single source integral equation method developed in \cite{CostabelLeLouer2}. 

The remaining parts of the paper are dedicated to the shape differentiability analysis of the solution of the dielectric scattering problem. We use the results of our first paper \cite{CostabelLeLouer} on the G\^ateaux differentiability of boundary integral operators with pseudo-homogeneous kernels. We refer to this paper for a discussion of the notion of G\^ateaux derivatives in Fr\'echet spaces and of some of their basic properties. 
In Section \ref{Helmholtzdec} we discuss the difficulties posed by the shape dependency of the function space $\TT\HH\sp{-\frac{1}{2}}(\Div_{\Gamma},\Gamma)$ on which the integral operators are defined, and we present a strategy for dealing with this difficulty, namely using the well-known tool \cite{delaBourdonnaye} of Helmholtz decomposition. 
In our approach, we map the variable spaces $\TT\HH\sp{-\frac{1}{2}}(\Div_{\Gamma_{r}},\Gamma_{r})$ to a fixed reference space with a transformation that preserves the Hodge structure. This technique involves the analysis of surface differential operators that have to be considered in suitable Sobolev spaces. Therefore in Section \eqref{SurfDiffOp} we recall and extend the results on the differentiability properties of surface differential operators established in \cite[Section 5]{CostabelLeLouer}. Using the rules on derivatives of composite and inverse functions, we obtain in Section \ref{ShapeSol} the shape differentiability properties of the solution of the scattering problem.  More precisely, we prove that  the boundary integral operators are infinitely G\^ateaux differentiable without loss of regularity, whereas previous results allowed such a loss \cite{Potthast3}, and we prove that the shape derivatives of the potentials are smooth away from the boundary but they lose regularity in the neighborhood of the boundary.  This implies that the far field is infinitely G\^ateaux differentiable, whereas the shape derivatives of the solution of the scattering problem lose regularity.

These new results generalize existing results: In the acoustic case, using a variational formulation, a characterization of the first G\^ateaux derivative was given by A. Kirsch \cite{Kirsch} for the Dirichlet problem and then  by  Hettlich  \cite{Hettlich, HettlichErra} for the impedance problem and the transmission problem. An alternative technique was introduced by Kress and P\"aiv\"arinta in \cite{KressPaivarinta} to investigate Fr\'echet differentiability   in acoustic scattering by the use of a  factorization of the difference of  the far-field pattern of the scattered wave for  two different obstacles.
In the electromagnetic case,  Potthast used the integral equation method to obtain a characterization of the first shape derivative of the solution of the perfect conductor scattering problem. In \cite{Kress}, Kress improved this result by using  a far-field identity and in \cite{HadarKress} Kress and Haddar  extended this technique to acoustic and electromagnetic impedance boundary value problems.
 
At the end of Section \ref{ShapeSol} we obtain a characterization of the first shape derivative of the solution of the dielectric scattering problem as the solution of a new electromagnetic transmission problem. We show  by deriving the integral representation of the solution that the first derivative satisfies the homogeneous Maxwell equations, and by directly deriving the boundary values of the solution itself we see that the first derivative satisfies two new  transmission conditions on the boundary. 

In the end we will have obtained two different algorithms for computing the shape derivative of the solution of the dielectric scattering problem and of the far field pattern: A first one by differentiating the integral representations and a second one by solving the new transmission problem associated with the first derivative. 

The characterization of the derivatives as solutions to boundary value problems has been obtained in the acoustic case by  Kress \cite{ColtonKress}, Kirsch \cite{Kirsch}, Hettlich and Rundell \cite{HettlichRundell} and Hohage \cite{Hohage} and has been used for the construction of  Newton-type or second degree iterative methods in acoustic inverse obstacle scattering. Whereas the use of these characterizations requires high order regularity assumption for the boundary, we expect that the differentiation of the boundary integral operators does not require much regularity.  Although in this paper we treat the case of a smooth boundary, in the last section we give some ideas on possible extensions of the results of this paper to non-smooth domains. 

\Rd
A final remark on the terminology of derivatives with respect to the variation of the domain: For the solutions of boundary value problems with a moving boundary, in the context of continuum mechanics one distinguishes frequently between \emph{material derivatives} and \emph{shape derivatives}. The former correspond to the G\^ateaux derivative with respect to the deformation, which is then interpreted as a flow associated with a velocity field, when the solution is pulled back to a fixed undeformed reference domain (Lagrangian coordinates). Depending on the interpretation of the flow, other names are used, such as ``Lagrangian derivative'', or ``substantial derivative''. The shape derivatives, on the other hand, correspond to the G\^ateaux derivative of the solution in the deformed domain (Eulerian coordinates). One also talks about ``Eulerian derivative''. The difference between the two has the form of a convection term, which can lead to a loss of one order of regularity of the shape derivative on the support of the velocity field \cite{LeugeringetalAMOptim11}. In the context of this terminology, the derivatives of the boundary integral operators that we studied in Part I of this paper \cite{CostabelLeLouer} for general pseudohomogeneous kernels and in this paper for the boundary integral operators of electromagnetism, correspond to the Lagrangian point of view, because they are obtained by pull-back to a fixed reference boundary. However, the derivative of the solution of dielectric scattering problem that we construct in Section \ref{ShapeSol} -- with the help of the derivatives of the boundary integral operators -- is the Eulerian shape derivative. We also observe the loss of one order of regularity of the shape derivative with respect to the solution of the transmission problem.
Finally, for the far field, the two notions of derivative coincide, because the deformation has compact support. 
\Bk

\section{Boundary integral operators and their main properties} \label{BoundIntOp}
Let $\Omega$ be a bounded domain in $\R^{3}$  and let $\Omega^c$ denote the exterior domain $\R^3\backslash\overline{\Omega}$. Throughout this paper, we will for simplicity assume that the boundary $\Gamma$ of $\Omega$ is a smooth and simply connected closed surface, so that $\Omega$ is diffeomorphic to a ball. 

We use standard notation for surface differential operators and boundary traces.  More details can be found in \cite{Nedelec}.
For a vector function $\vv\in\mathscr{C}^k(\R^3,\C^3)$ with $k\in\N^*$, we denote by $[\nabla\vv]$ the matrix the $i$-th column of which is the gradient of the $i$-th component of $\vv$, and we set $[\D\vv]=\transposee{[\nabla\vv]}$.
Let $\nn$ denote the outer unit normal vector on the boundary $\Gamma$.
The tangential gradient of a complex-valued scalar function $u\in\mathscr{C}^k(\Gamma,\C)$ is defined by 
\begin{equation}\label{G}
\nabla_{\Gamma}u=\nabla\tilde{u}_{|\Gamma}-\left(\nabla\tilde{u}_{|\Gamma}\cdot\nn\right)\nn,
\end{equation}
and the tangential vector curl is defined by 
\begin{equation}\label{RR}
\Rot_{\Gamma}u=\nabla\tilde{u}_{|\Gamma}\wedge\nn,
\end{equation}
where $\tilde{u}$ is a smooth extension of $u$ to the whole space $\R^3$.
For a complex-valued vector function $\uu\in\mathscr{C}^k(\Gamma,\C^3)$, we denote $[\nabla_{\Gamma}\uu]$ the matrix the $i$-th column of which is the tangential gradient of the $i$-th component of $\uu$ and we set  $[\D_{\Gamma}\uu]=\transposee{[\nabla_{\Gamma}\uu]}$.

The surface divergence of $\uu\in\mathscr{C}^k(\Gamma,\C^3)$ is defined   by 
\begin{equation}\label{e:D}
\Div_{\Gamma}\uu=\Div\tilde{\uu}_{|\Gamma}-\left([\nabla\tilde{\uu}_{|\Gamma}]\nn\cdot\nn\right),\end{equation}
 and the surface scalar curl is defined  by 
\begin{equation*}\label{R}
\rot_{\Gamma}\uu=\nn\cdot\left(\Rot\tilde{\uu}\right)\,.
\end{equation*}
These definitions do not depend on the choice of the extension $\tilde{\uu}$. 

\begin{definition}\label{2.1}
 For  a vector function $\vv\in \mathscr{C}^{\infty}(\overline{\Omega},\C^3)$ a scalar function $v\in\mathscr{C}^{\infty}(\overline{\Omega},\C)$ and $\kappa\in\C\setminus\{0\}$, we define the
traces :
$$\gamma v=v_{|_{\Gamma}} ,$$
$$\gamma_{n} v=\frac{\partial}{\partial\nn}v=\nn\cdot(\nabla v)_{|_{\Gamma}},$$
$$\gamma_{D}\vv:=\nn\wedge (\vv)_{|_{\Gamma}}\textrm{ (Dirichlet),}$$
$$\gamma_{N_{\kappa}}\vv:=\dfrac{1}{\kappa}\nn\wedge(\Rot \vv)_{|_{\Gamma}}\textrm{ (Neumann).}$$
\end{definition}
We define in the same way the exterior traces $\gamma^c$, $\gamma_{n}^c$, $\gamma_{D}^c$ and $\gamma_{N_{\kappa}}^c$. 

\medskip

For a domain $G\subset\R^3$ we denote by $H^s(G)$ the usual $L^2$-based Sobolev space of order $s\in\R$, and by $H^s_{\loc}(\overline G)$ the space of functions whose restrictions to any bounded subdomain $B$ of $G$ belong to $H^s(B)$. Spaces of vector functions will be denoted by boldface letters, thus 
$$
 \HH^s(G)=(H^s(G))^3\,.
$$ 
If $\D$ is a differential operator, we write:
\begin{eqnarray*}
\HH^s(\D,\Omega)& = &\{ u \in \HH^s(\Omega) : \D u \in \HH^s(\Omega)\}\\
\HH^s_{\loc}(\D,\overline{\Omega^c})&=& \{ u \in \HH^s_{\loc}(\overline{\Omega^c}) : \D u \in
\HH^s_{\loc}(\overline{\Omega^c}) \}.
\end{eqnarray*}
The space $\HH^s(\D, \Omega)$ is endowed with the natural graph norm. When $s=0$, this defines in particular the Hilbert spaces $\HH(\Rot,\Omega)$ and $\HH(\Rot\Rot,\Omega)$.
We introduce the Hilbert spaces $H^s(\Gamma)=\gamma\left(H^{s+\frac{1}{2}}(\Omega)\right),$ and $\TT\HH^{s}(\Gamma)=\gamma_{D}\left(\HH^{s+\frac{1}{2}}(\Omega)\right)$. For $s>0$,  the trace mappings 
 $$\gamma:H^{s+\frac{1}{2}}(\Omega)\rightarrow H^{s}(\Gamma),$$
 $$\gamma_{n}:H^{s+\frac{3}{2}}(\Omega)\rightarrow H^{s}(\Gamma),$$ 
 $$\gamma_{D}:\HH^{s+\frac{1}{2}}(\Omega)\rightarrow \TT\HH^{s}(\Gamma) $$  are then continuous. 
The dual of $H^s(\Gamma)$ and $\TT\HH^{s}(\Gamma)$ with respect to the $L^2$ (or $\LL^2$) scalar product is denoted by $H^{-s}(\Gamma)$ and $\TT\HH^{-s}(\Gamma)$, respectively.  
 
The surface differential operators defined above can be extended to  Sobolev spaces: For $s\in\R$ the tangential gradient and the tangential vector curl are obviously linear and continuous operators from $H^{s+1}(\Gamma)$ to $\TT\HH^s(\Gamma)$. The surface divergence and  the  surface scalar curl can then be defined on tangential vector fields by duality, extending duality relations valid for smooth functions  
\begin{equation}\label{dualgrad}
\int_{\Gamma}(\Div_{\Gamma}\jj)\cdot\varphi\,ds=-\int_{\Gamma}\jj\cdot\nabla_{\Gamma}\varphi\,ds
\qquad
\text{for all }\jj\in\TT\HH^{s+1}(\Gamma),\;\varphi\in H^{-s}(\Gamma),
\end{equation}
\begin{equation}\label{dualrot}
\int_{\Gamma}(\rot_{\Gamma}\jj)\cdot\varphi\,ds=\int_{\Gamma}\jj\cdot\Rot_{\Gamma}\varphi\,ds
\qquad\text{for all }\jj\in\TT\HH^{s+1}(\Gamma),\;\varphi\in H^{-s}(\Gamma).
\end{equation}
 We have the following equalities:
\begin{equation}\label{eqd2}
  \rot_{\Gamma}\nabla_{\Gamma}=0\text{ and }\Div_{\Gamma}\Rot_{\Gamma}=0
\end{equation}
\begin{equation}\label{eqd3}
 \Div_{\Gamma}(\nn\wedge \jj)=-\rot_{\Gamma}\jj\text{ and }\rot_{\Gamma}(\nn\wedge \jj)=\Div_{\Gamma}\jj\end{equation}

\begin{definition} Let $s\in\R$. We define the Hilbert space
$$
  \TT\HH^{s}(\Div_{\Gamma},\Gamma)=\left\{ \jj\in\TT\HH^{s}(\Gamma) \,;\, \Div_{\Gamma}\jj \in H^{s}(\Gamma)\right\}
$$
endowed with the norm 
$$
||\cdot||_{\TT\HH^{s}(\Div_{\Gamma},\Gamma)}=
\left(||\cdot||_{\TT\HH^{s}(\Gamma)}^{2}+||\Div_{\Gamma}\cdot||_{H^{s}(\Gamma)}^{2}\right)^{\frac12}.
$$ 
\end{definition}

\begin{lemma}\label{2.3} 
The operators $\gamma_{D}$ and $\gamma_{N}$ are linear 
continuous from $\mathscr{C}^{\infty}(\overline{\Omega},\C^3)$ to $\TT\LL^2(\Gamma)$ and they
can be extended to continuous linear operators from $\HH(\Rot,\Omega)$ and 
$\HH(\Rot,\Omega)\cap\HH(\Rot\Rot,\Omega)$, respectively, to $\TT\HH^{-\frac{1}{2}}(\Div_{\Gamma},\Gamma)$. 
\end{lemma}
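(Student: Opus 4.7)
The plan is to establish the lemma via integration by parts (Green's identities) combined with the standard duality characterization of negative-order Sobolev spaces, followed by a density argument. I would first dispose of the easy claim: for $\vv \in \mathscr{C}^\infty(\overline\Omega,\C^3)$, the trace $\vv|_\Gamma$ lies in $\LL^2(\Gamma)$, hence $\gamma_D\vv = \nn\wedge\vv|_\Gamma$ is tangential and in $\TT\LL^2(\Gamma)$; similarly $\gamma_{N_\kappa}\vv = \kappa^{-1}\gamma_D(\Rot\vv) \in \TT\LL^2(\Gamma)$.

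For the main assertion, the key step is to bound $\gamma_D\vv$ in $\TT\HH^{-1/2}(\Gamma)$ and $\Div_\Gamma(\gamma_D\vv)$ in $H^{-1/2}(\Gamma)$ in terms of $\|\vv\|_{\HH(\Rot,\Omega)}$. For the first, given a tangential test field $\jj \in \TT\HH^{1/2}(\Gamma)$, I lift it to $\tilde\jj \in \HH^1(\Omega)$ with $\|\tilde\jj\|_{\HH^1(\Omega)} \leq C\|\jj\|_{\TT\HH^{1/2}(\Gamma)}$, and use the classical vector Green identity
\begin{equation*}
\int_\Gamma (\nn\wedge\vv)\cdot\tilde\jj\,ds = \int_\Omega \Rot\vv\cdot\tilde\jj\,dx - \int_\Omega \vv\cdot\Rot\tilde\jj\,dx,
\end{equation*}
which produces the bound $|\langle \gamma_D\vv,\jj\rangle| \leq C\|\vv\|_{\HH(\Rot,\Omega)}\|\jj\|_{\TT\HH^{1/2}(\Gamma)}$. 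For the surface divergence, I test against $\varphi \in H^{1/2}(\Gamma)$, lift to $\tilde\varphi \in H^1(\Omega)$, and apply the same Green identity with $\tilde\jj = \nabla\tilde\varphi$; since $\Rot\nabla\tilde\varphi = 0$ and $\gamma_D\vv$ is tangential so that $(\gamma_D\vv)\cdot\nabla\tilde\varphi|_\Gamma = (\gamma_D\vv)\cdot\nabla_\Gamma\varphi$, combined with the duality formula \eqref{dualgrad} this yields
\begin{equation*}
\int_\Gamma \Div_\Gamma(\gamma_D\vv)\,\varphi\,ds = -\int_\Omega \Rot\vv\cdot\nabla\tilde\varphi\,dx,
\end{equation*}
whence $|\langle\Div_\Gamma(\gamma_D\vv),\varphi\rangle| \leq C\|\vv\|_{\HH(\Rot,\Omega)}\|\varphi\|_{H^{1/2}(\Gamma)}$. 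Summing the two estimates gives continuity of $\gamma_D$ from $\mathscr{C}^\infty(\overline\Omega,\C^3)$ into $\TT\HH^{-1/2}(\Div_\Gamma,\Gamma)$ with respect to the $\HH(\Rot,\Omega)$-norm; the extension to all of $\HH(\Rot,\Omega)$ then follows from density of $\mathscr{C}^\infty(\overline\Omega,\C^3)$ in $\HH(\Rot,\Omega)$.

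The Neumann trace is handled by reduction to the Dirichlet case: the identity $\gamma_{N_\kappa}\vv = \kappa^{-1}\gamma_D(\Rot\vv)$ shows that if $\vv \in \HH(\Rot,\Omega)\cap\HH(\Rot\Rot,\Omega)$, then $\Rot\vv \in \HH(\Rot,\Omega)$ (with $\Rot(\Rot\vv) \in \LL^2(\Omega)$), and applying the already proven statement to $\Rot\vv$ delivers $\gamma_{N_\kappa}\vv \in \TT\HH^{-1/2}(\Div_\Gamma,\Gamma)$ with the corresponding norm bound.

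The main obstacle I anticipate is purely bookkeeping: ensuring that the duality formula \eqref{dualgrad} is applied at the correct Sobolev level, and that the lifting of tangential fields $\jj \in \TT\HH^{1/2}(\Gamma)$ to $\HH^1(\Omega)$ indeed satisfies the claimed continuity (this uses that $\TT\HH^{1/2}(\Gamma) \subset \HH^{1/2}(\Gamma)$ and the standard trace right-inverse). Apart from this, the entire argument is routine once the correct Green identity is identified.
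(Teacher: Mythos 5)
Your proof is correct. The paper itself gives no proof of Lemma \ref{2.3}: it is recalled as a standard result, with the relevant references (N\'ed\'elec and the trace papers of Buffa et al.) cited nearby, and the argument in those sources is exactly the one you give --- the Stokes/Green identity $\int_\Gamma (\nn\wedge\vv)\cdot\ww\,ds=\int_\Omega(\Rot\vv\cdot\ww-\vv\cdot\Rot\ww)\,dx$ tested against an $\HH^1$-lifting to control $\gamma_D\vv$ in $\TT\HH^{-1/2}(\Gamma)$, the choice $\ww=\nabla\tilde\varphi$ (equivalently, the identity $\Div_\Gamma(\nn\wedge\vv)=-\nn\cdot\Rot\vv$, i.e.\ the normal trace of the divergence-free field $\Rot\vv$) to control $\Div_\Gamma\gamma_D\vv$ in $H^{-1/2}(\Gamma)$, density, and the reduction $\gamma_{N_\kappa}=\kappa^{-1}\gamma_D\circ\Rot$ for the Neumann trace. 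The only point to tidy is the one you flag yourself: for $\tilde\varphi\in H^1(\Omega)$ the field $\nabla\tilde\varphi$ has no boundary trace, so you should first take $\varphi$ smooth (where the boundary term is legitimately $\int_\Gamma(\nn\wedge\vv)\cdot\nabla_\Gamma\varphi\,ds$ by tangentiality and \eqref{dualgrad}) and then conclude by density of smooth functions in $H^{1/2}(\Gamma)$; this is routine and does not affect the validity of the argument.
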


For  $\uu\in \HH_{\loc}(\Rot,\overline{\Omega^c})$ and $\vv \in \HH_{\loc}(\Rot\Rot,\overline{\Omega^c}))$
we define $\gamma_{D}^c\uu $ and $\gamma_{N}^c\vv $ in the same way and the same mapping properties hold true.

Recall that we assume that the boundary $\Gamma$ is smooth and topologically trivial. For a proof of the following result, we refer to \cite{BuffaCiarlet,MartC,Nedelec}.
\begin{lemma}\label{LapBel}
 Let $s\in\R$. The Laplace--Beltrami operator defined by 
 \begin{equation}\label{eqd1}
 \Delta_{\Gamma}u=\Div_{\Gamma}\nabla_{\Gamma}u=-\rot_{\Gamma}\Rot_{\Gamma}u.
 \end{equation}
  is linear and continuous from $H^{s+2}(\Gamma)$ to $H^s(\Gamma)$. 
For $f\in H^s(\Gamma)$ and $u\in H^{s+2}(\Gamma)$, the equation 
$\Delta_{\Gamma}u=f$ has the equivalent formulation
\begin{equation}\label{ippLB}
\int_{\Gamma}\nabla_{\Gamma}u\cdot\nabla_{\Gamma}\varphi\,ds
=
-\int_{\Gamma}f\cdot\varphi\,ds,\qquad\text{ for all }\varphi\in H^{-s}(\Gamma).
\end{equation}
The operator $\Delta_{\Gamma}:H^{s+2}(\Gamma)\to H^{s}(\Gamma)$ is Fredholm of index zero, its kernel and cokernel consisting of constant functions, so that
 $\Delta_{\Gamma}:H^{s+2}(\Gamma)\slash\C\to H^{s}_{*}(\Gamma)$ is an isomorphism. Here we define the space $H^s_{*}(\Gamma)$  by
$$
f\in H^s_{*}(\Gamma)\quad\Longleftrightarrow\quad f\in H^s(\Gamma)\textrm{ and }\int_{\Gamma}f\,ds=0.
$$
For $f\in H^s_{*}(\Gamma)$ we denote the unique solution $u\in H^{s+2}(\Gamma)\slash\C$ of \eqref{ippLB} by $u=\Delta_{\Gamma}^{-1}f$.
\end{lemma}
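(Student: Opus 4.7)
I would treat the three assertions in order --- continuity (together with the second identity), equivalence with the weak formulation \eqref{ippLB}, and the Fredholm statement --- reducing the last to the case $s=0$ by elliptic regularity and duality.

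\emph{Continuity and the second identity.} The map $\nabla_\Gamma:H^{s+2}(\Gamma)\to\TT\HH^{s+1}(\Gamma)$ is continuous by the definition recalled in the text, and the duality \eqref{dualgrad} immediately shows that $\Div_\Gamma:\TT\HH^{s+1}(\Gamma)\to H^s(\Gamma)$ is continuous. Composition gives the continuity of $\Delta_\Gamma:H^{s+2}(\Gamma)\to H^s(\Gamma)$. For the alternative expression $-\rot_\Gamma\Rot_\Gamma u$, a direct computation using \eqref{G} and \eqref{RR} yields $\nn\wedge\Rot_\Gamma u=\nabla_\Gamma u$, so that the first identity in \eqref{eqd3} applied to $\jj=\Rot_\Gamma u$ gives $\Delta_\Gamma u=\Div_\Gamma(\nn\wedge\Rot_\Gamma u)=-\rot_\Gamma\Rot_\Gamma u$. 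Applying \eqref{dualgrad} with $\jj=\nabla_\Gamma u\in\TT\HH^{s+1}(\Gamma)$ and arbitrary $\varphi\in H^{-s}(\Gamma)$ is exactly the equivalence of $\Delta_\Gamma u=f$ with \eqref{ippLB}.

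\emph{Fredholm property at $s=0$.} The sesquilinear form $a(u,v)=\int_\Gamma\nabla_\Gamma u\cdot\overline{\nabla_\Gamma v}\,ds$ is continuous on $H^1(\Gamma)$, non-negative, and symmetric. The Poincaré--Wirtinger inequality on the smooth, compact, connected surface $\Gamma$ makes $a$ coercive on $H^1(\Gamma)/\C$, so Lax--Milgram produces, for every $f\in H^{0}_{*}(\Gamma)$, a unique $u\in H^1(\Gamma)/\C$ satisfying \eqref{ippLB}. Local elliptic regularity in coordinate charts, where $\Delta_\Gamma$ is a second-order elliptic operator with smooth coefficients, then upgrades $u$ to $H^2(\Gamma)/\C$ with continuous dependence on $f$. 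The kernel of $\Delta_\Gamma:H^2(\Gamma)\to H^{0}(\Gamma)$ consists of constants (take $\varphi=\bar u$ in \eqref{ippLB} to force $\nabla_\Gamma u=0$, then use connectedness of $\Gamma$), and the symmetry of $a$ shows that the cokernel is likewise $\C$. Thus $\Delta_\Gamma$ has index zero at $s=0$ and $\Delta_\Gamma:H^2(\Gamma)/\C\to H^{0}_{*}(\Gamma)$ is an isomorphism.

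\emph{Promotion to general $s$, which I expect to be the main obstacle.} For integer $s>0$ I would bootstrap: if $u\in H^2(\Gamma)/\C$ solves $\Delta_\Gamma u=f\in H^s(\Gamma)$, then chart-by-chart interior regularity for smooth second-order elliptic operators yields $u\in H^{s+2}(\Gamma)/\C$ with continuous dependence; non-integer $s>0$ then follows by interpolation between consecutive integer cases. For $s<0$ I would argue by duality: the identity \eqref{ippLB} exhibits $\Delta_\Gamma$ as formally self-adjoint for the $L^2$-pairing, so the transpose of the already established isomorphism $\Delta_\Gamma:H^{-s+2}(\Gamma)/\C\to H^{-s}_{*}(\Gamma)$ (applied with $-s>0$) produces, using the standard identifications $(H^{\sigma}(\Gamma)/\C)'=H^{-\sigma}_{*}(\Gamma)$ and $(H^{\sigma}_{*}(\Gamma))'=H^{-\sigma}(\Gamma)/\C$, an isomorphism from $H^{s+2}(\Gamma)/\C$ to $H^{s}_{*}(\Gamma)$. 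The delicate point is precisely this dualization: one has to track the annihilator/quotient identifications on both sides and verify that the transposed operator really coincides with $\Delta_\Gamma$, which is where the symmetry built into \eqref{ippLB} is essential.
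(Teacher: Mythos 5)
Your argument is correct and follows essentially the same route the paper indicates for this recalled result: injectivity of $\nabla_{\Gamma}$ on $H^{s+2}(\Gamma)/\C$ (your Poincar\'e--Wirtinger coercivity), the Lax--Milgram lemma in the $H^1$ variational setting (the paper's ``$s=-1$'' case), standard elliptic regularity, and duality/interpolation to reach all $s\in\R$. One small bookkeeping point: transposing the already-established positive-order isomorphisms directly yields only the orders $s\le-2$, so the range $-2<s<0$ must also be covered by your interpolation step (or anchored at the self-adjoint $s=-1$ Lax--Milgram case); with that noted, the proof is complete.
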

This result is due to the injectivity of the operator $\nabla_{\Gamma}$ from $H^{s+2}(\Gamma)\slash\C$ to $\TT\HH^{s+1}(\Gamma)$, the Lax-Milgram lemma applied to \eqref{ippLB} for $s=-1$, and standard elliptic regularity theory. 
Note that $\Rot_{\Gamma}$ is also injective from $H^{s+2}(\Gamma)\slash\C$ to $\TT\HH^{s+1}(\Gamma)$, and by duality both $\Div_{\Gamma}$ and $\rot_{\Gamma}$ are surjective from $\TT\HH^{s+1}(\Gamma)$ to $H^s_{*}(\Gamma)$

Notice  that $\rot_{\Gamma}$ is defined in a natural way on all of $\HH^{s+1}(\Gamma)$ and maps to $H^s_{*}(\Gamma)$, because we have $\rot_{\Gamma}(\varphi\nn)=0$ for any scalar function $\varphi\in H^{s+1}(\Gamma)$. Thus  \eqref{dualrot} is still valid for a not necessarily tangential density $\jj\in\HH^{s+1}(\Gamma)$. An analogous property for $\Div_{\Gamma}$ defined by \eqref{e:D} is not available.
 
\bigskip

We now recall some well known results about electromagnetic potentials. Details can be found 
in \cite{BuffaCiarlet, BuffaCostabelSchwab, BuffaCostabelSheen,BuffaHiptmairPetersdorffSchwab,HsiaoWendland,Nedelec}.

Let $\kappa$ be a positive real number  and let 
$G_{a}(\kappa,|x-y|)=\dfrac{e^{i\kappa|x-y|}}{4\pi| x-y|}
$
be the fundamental solution of the Helmholtz equation $ {\Delta u + \kappa^2u =0}.
$
The single layer potential  $\Psi_{\kappa}$ is given by   
\begin{center}$\quad\Psi_{\kappa}u(x) = \displaystyle{\int_{\Gamma}G_{a}(\kappa,|x-y|)u(y) ds(y)}\qquad x
\in\R^3\backslash\Gamma$,\end{center}
and its trace by 
$$
 V_{\kappa}u(x)= \int_{\Gamma}G_{a}(\kappa,|x-y|)u(y) ds(y)\qquad x
\in\Gamma.
$$
As discussed in the first part of this paper \cite{CostabelLeLouer}, the fundamental solution is pseudo-homogeneous of class $-1$. The single layer potential $\Psi_{\kappa}u$ is continuous across the boundary $\Gamma$. As a consequence we have the following result :
\begin{lemma}
\label{3.1}  Let $s\in\R$.
The operators
$$ 
\begin{array}{ll} \Psi_{\kappa} & : 
H^{s-\frac{1}{2}}(\Gamma)\rightarrow 
  H^{s+1}(\Omega)\cap H^{s+1}_{\loc}(\overline{\Omega^{c}}) 
  \;
  \Big(H^{s-\frac{1}{2}}(\Gamma)\rightarrow H^{s+1}_{\loc}(\R^3)\text{ if }s<\frac12\Big)
  \\[1ex]
V_{\kappa}& : H^{s-\frac{1}{2}}(\Gamma)\rightarrow H^{s+\frac{1}{2}}(\Gamma) \end{array}
$$  
are continuous.
\end{lemma}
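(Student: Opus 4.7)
\bigskip

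\noindent\textbf{Proof plan for Lemma~\ref{3.1}.}
The plan is to deduce the mapping property of $V_{\kappa}$ from the pseudodifferential calculus on the smooth closed surface $\Gamma$, and then to bootstrap the mapping property of the potential $\Psi_{\kappa}$ from that of its trace by exploiting the fact that $\Psi_{\kappa}u$ is a solution of the Helmholtz equation off $\Gamma$ whose values and fluxes across $\Gamma$ are controlled by $u$.

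First I would handle $V_{\kappa}$. The kernel $G_a(\kappa,|x-y|)$ differs from the Laplace kernel $\frac{1}{4\pi|x-y|}$ by a smooth (even analytic) remainder, so it is a pseudo-homogeneous kernel of class $-1$ in the sense introduced in the first part of the series \cite{CostabelLeLouer}. By the pseudodifferential calculus on the smooth compact manifold $\Gamma$, the associated operator $V_{\kappa}$ is a classical pseudodifferential operator of order $-1$. In particular, for every $s\in\R$ it extends to a continuous map $H^{s-\frac12}(\Gamma)\to H^{s+\frac12}(\Gamma)$, which is the second assertion.

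Next I would treat the volume potential $\Psi_{\kappa}$. The key structural facts are: (i) $\Psi_{\kappa}u$ satisfies the homogeneous Helmholtz equation $(\Delta+\kappa^{2})\Psi_{\kappa}u=0$ in $\Omega$ and in $\Omega^{c}$; (ii) $\Psi_{\kappa}u$ is continuous across $\Gamma$, with interior and exterior trace both equal to $V_{\kappa}u\in H^{s+\frac12}(\Gamma)$; (iii) $\Psi_{\kappa}u$ satisfies the Silver--M\"uller radiation condition at infinity. By the standard regularity theory for the interior Dirichlet problem of the Helmholtz equation on a smooth domain, the unique solution in $H^{1}(\Omega)$ with trace in $H^{s+\frac12}(\Gamma)$ lies in $H^{s+1}(\Omega)$, with a norm controlled by the trace norm. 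Combining this with the continuity of $V_{\kappa}$ already obtained yields $\|\Psi_{\kappa}u\|_{H^{s+1}(\Omega)}\lesssim \|u\|_{H^{s-\frac12}(\Gamma)}$. The same argument applied to the exterior domain, together with the radiation condition (and a cut-off to localize), gives the corresponding estimate in $H^{s+1}_{\loc}(\overline{\Omega^{c}})$.

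It remains to upgrade this to global $H^{s+1}_{\loc}(\R^{3})$ regularity in the range $s<\tfrac12$. Here I would invoke the classical characterization: for $s+1<\tfrac32$, a function belongs to $H^{s+1}$ on a neighborhood of $\Gamma$ if and only if its restrictions to the two sides both belong to $H^{s+1}$ and the interior and exterior traces agree, no condition on normal derivatives being required in this regularity range. Both conditions have just been verified, so $\Psi_{\kappa}u\in H^{s+1}_{\loc}(\R^{3})$ with the claimed continuous dependence on~$u$. The main obstacle is the exterior piece: one has to justify the elliptic regularity estimate in $\Omega^{c}$ without appealing to global ellipticity on an unbounded domain. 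This is dealt with by a standard bounded-ball-plus-cutoff argument together with the well-posedness of the exterior Dirichlet Helmholtz problem (using the radiation condition to ensure uniqueness and the $H^{1}_{\loc}$-regularity of $\Psi_{\kappa}u$ as the starting point for the bootstrap).
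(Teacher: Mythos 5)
The paper does not actually prove Lemma~\ref{3.1}: it recalls it as a standard mapping property, pointing to the fact that $G_a$ is pseudo-homogeneous of class $-1$ (Part I of the series) and to the references \cite{MartC,HsiaoWendland,Nedelec}. So your proposal has to stand on its own, and as written it has two genuine gaps, both in the treatment of $\Psi_\kappa$ (the $V_\kappa$ part via the pseudodifferential calculus of order $-1$ on the smooth compact surface is fine).

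First, the a priori estimate you invoke for the interior Dirichlet problem is false when $\kappa^2$ is a Dirichlet eigenvalue of $-\Delta$ in $\Omega$: there is then a nonzero $w\in H^1_0(\Omega)$ with $(\Delta+\kappa^2)w=0$ and $\gamma w=0$, so neither uniqueness nor the bound $\|w\|_{H^{s+1}(\Omega)}\lesssim\|\gamma w\|_{H^{s+1/2}(\Gamma)}$ can hold. Elliptic regularity up to the boundary still gives the shift, but only in the form $\|w\|_{H^{s+1}(\Omega)}\lesssim\|\gamma w\|_{H^{s+1/2}(\Gamma)}+\|w\|_{H^{1}(\Omega)}$, and you must then bound the lower-order term separately, e.g.\ by the classical variational estimate $\|\Psi_\kappa u\|_{H^1_{\loc}}\lesssim\|u\|_{H^{-1/2}(\Gamma)}$. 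Second, and more seriously, that base case only covers $s\ge 0$: for $u\in H^{s-1/2}(\Gamma)$ with $s<0$ you do not know a priori that $\Psi_\kappa u$ lies in $H^1(\Omega)$, so there is no solution of the Dirichlet problem to which your regularity bootstrap applies, and identifying $\Psi_\kappa u$ with the transposition (very weak) solution with datum $V_\kappa u$ is essentially equivalent to what you are trying to prove. The standard way to close this is to prove the low-regularity case directly: write $\Psi_\kappa u=\mathcal{N}_\kappa(\gamma^{*}u)$ with $\mathcal{N}_\kappa$ the Newton potential; the adjoint trace $\gamma^{*}:H^{s-1/2}(\Gamma)\to H^{s-1}_{\mathrm{comp}}(\R^3)$ is bounded exactly for $s<\tfrac12$, and $\mathcal{N}_\kappa:H^{s-1}_{\mathrm{comp}}\to H^{s+1}_{\loc}$ is a pseudodifferential operator of order $-2$. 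This yields the global statement $\Psi_\kappa:H^{s-1/2}(\Gamma)\to H^{s+1}_{\loc}(\R^3)$ for $s<\tfrac12$ in one step (which is precisely why the lemma singles out that range), and your one-sided elliptic-regularity bootstrap then correctly handles $s\ge\tfrac12$. A minor point: the scalar potential $\Psi_\kappa u$ satisfies the Sommerfeld radiation condition, not the Silver--M\"uller condition, which is the Maxwell analogue.
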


The electric potential operator $\Psi_{E_{\kappa}}$ is defined for $\jj\in
\TT\HH^{-\frac{1}{2}}(\Div_{\Gamma},\Gamma) $ by 
$$
\Psi_{E_{\kappa}}\jj :=
\kappa\,\Psi_{\kappa}\,\jj + \dfrac{1}{\kappa}\nabla\Psi_{\kappa}\Div_{\Gamma}\jj \,.
$$
In $\R^{3}\setminus\Gamma$, this can be written as 
$\Psi_{E_{\kappa}}\jj :=
\dfrac{1}{\kappa}\Rot\Rot\Psi_{\kappa}\jj$ 
because of the Helmholtz equation and
the identity $\strut\Rot\Rot = -\Delta +\nabla\Div$.

The magnetic potential operator 
$\Psi_{M_{\kappa}}$ is defined for $\mm\in \TT\HH^{-\frac{1}{2}}(\Div_{\Gamma},\Gamma) $
by 
$$
 \Psi_{M_{\kappa}}\mm := \Rot\Psi_{\kappa}\mm.
$$
 We denote the identity operator by $\Id$.
\begin{lemma}
\label{3.2}
The potentials operators $\Psi_{E_{\kappa}}$ and $\Psi_{M_{\kappa}}$ are
continuous from $\TT\HH^{-\frac{1}{2}}(\Div_{\Gamma},\Gamma)$ to
$\HH_{\loc}(\Rot,\R^3)$. 
For $\jj\in \TT\HH^{-\frac{1}{2}}(\Div_{\Gamma},\Gamma)$ we have 
$$
(\Rot\Rot -\kappa^2\Id)\Psi_{E_{\kappa}}\jj = 0\textrm{ and }(\Rot\Rot
-\kappa^2\Id)\Psi_{M_{\kappa}}\mm = 0\textrm{ in }\R^3\backslash\Gamma,
$$ and  $\Psi_{E_{\kappa}}\jj$
and $\Psi_{M_{\kappa}}\mm$ satisfy the Silver-M\"uller condition.
\end{lemma}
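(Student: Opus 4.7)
My plan is to reduce both assertions to the mapping properties of the scalar single-layer potential $\Psi_\kappa$ (Lemma~\ref{3.1}), by exploiting the defining formulas
$\Psi_{E_\kappa}\jj = \kappa\,\Psi_\kappa\jj + \kappa^{-1}\nabla\Psi_\kappa(\Div_\Gamma\jj)$
and $\Psi_{M_\kappa}\mm = \Rot\Psi_\kappa\mm$, combined with the vector-calculus identity
$\Rot\Rot = -\Delta + \nabla\Div$ and the scalar Helmholtz equation satisfied by $\Psi_\kappa$ away from $\Gamma$. The hypothesis $\jj\in\TT\HH^{-1/2}(\Div_\Gamma,\Gamma)$ provides \emph{both} $\jj\in\HH^{-1/2}(\Gamma)$ and $\Div_\Gamma\jj\in H^{-1/2}(\Gamma)$, and this second piece of information is precisely what makes the second-order object $\nabla\Psi_\kappa(\Div_\Gamma\jj)$ belong to $\LL^2_{\loc}$.

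For the continuity statement I would first apply Lemma~\ref{3.1} componentwise with $s=0$ to place $\Psi_\kappa\jj$ in $\HH^1(\Omega)\cap\HH^1_{\loc}(\overline{\Omega^c})$ and $\Psi_\kappa(\Div_\Gamma\jj)$ in $H^1(\Omega)\cap H^1_{\loc}(\overline{\Omega^c})$, so that $\Psi_{E_\kappa}\jj$ and $\Psi_{M_\kappa}\mm$ automatically lie in $\LL^2$ on each side of $\Gamma$. To control their curls, I would first verify, by moving $\Div$ under the integral sign and invoking the duality \eqref{dualgrad}, the identity $\Div\Psi_\kappa\jj = \Psi_\kappa(\Div_\Gamma\jj)$. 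Combined with $-\Delta\Psi_\kappa\jj = \kappa^2\Psi_\kappa\jj$ on $\R^3\setminus\Gamma$ and the identity $\Rot\Rot = -\Delta + \nabla\Div$, this yields the pointwise identities
$$
\Rot\Psi_{E_\kappa}\jj = \kappa\,\Psi_{M_\kappa}\jj, \qquad \Rot\Psi_{M_\kappa}\mm = \kappa\,\Psi_{E_\kappa}\mm \qquad \text{on }\R^3\setminus\Gamma,
$$
whose right-hand sides are in $\LL^2$ on each side of $\Gamma$ by the previous step. Applying $\Rot$ once more and using $\Rot\nabla=0$ then immediately gives the Maxwell--Helmholtz equations $(\Rot\Rot-\kappa^2\Id)\Psi_{E_\kappa}\jj = 0$ and $(\Rot\Rot-\kappa^2\Id)\Psi_{M_\kappa}\mm = 0$ on $\R^3\setminus\Gamma$.

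For the Silver-M\"uller radiation condition, I would differentiate the kernel representation under the integral sign, using that $G_a(\kappa,|x-y|)$ satisfies the Sommerfeld radiation condition uniformly for $y$ in the compact set $\Gamma$, together with matching outgoing asymptotics for $\nabla_x G_a$. These are inherited first by $\Psi_\kappa\jj$ and $\Psi_\kappa(\Div_\Gamma\jj)$ and then by their gradient and curl, which amounts exactly to the Silver-M\"uller condition for $\Psi_{E_\kappa}\jj$ and $\Psi_{M_\kappa}\mm$.

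The main obstacle, in my view, is essentially book-keeping rather than hard analysis: one must interpret $\HH_{\loc}(\Rot,\R^3)$ as $\HH(\Rot,\Omega)\cap\HH_{\loc}(\Rot,\overline{\Omega^c})$ (since $\Psi_{M_\kappa}\mm$ has in general a nontrivial tangential jump across $\Gamma$, which prevents the distributional curl on all of $\R^3$ from being $\LL^2_{\loc}$), and one must genuinely exploit the graph-space hypothesis $\Div_\Gamma\jj\in H^{-1/2}(\Gamma)$ --- a bare $\TT\HH^{-1/2}(\Gamma)$ regularity for $\jj$ alone would not be enough to keep either $\Psi_{E_\kappa}\jj$ or $\Rot\Psi_{M_\kappa}\mm$ in $\LL^2_{\loc}$ on each side of $\Gamma$.
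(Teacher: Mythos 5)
Your argument is correct, and it is the standard one. The paper itself does not prove Lemma~\ref{3.2}; it recalls it as a known result and points to the references listed just above it (N\'ed\'elec, Buffa--Costabel--Schwab, Buffa--Hiptmair--von~Petersdorff--Schwab, \dots), so there is no in-paper proof to compare against. Your route --- reduce everything to Lemma~\ref{3.1} via the identity $\Div\Psi_{\kappa}\jj=\Psi_{\kappa}(\Div_{\Gamma}\jj)$ (a consequence of \eqref{dualgrad} applied with the smooth test function $y\mapsto G_{a}(\kappa,|x-y|)$ for $x\notin\Gamma$), deduce $\Rot\Psi_{E_{\kappa}}\jj=\kappa\Psi_{M_{\kappa}}\jj$ and $\Rot\Psi_{M_{\kappa}}\mm=\kappa\Psi_{E_{\kappa}}\mm$, and read off both the $\LL^{2}_{\loc}$ bounds on the curls and the Maxwell equation --- is exactly the mechanism the paper itself relies on later (these two curl identities are quoted verbatim in the proof of Theorem~\ref{thpsi}). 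Your closing caveat is also well taken and worth keeping: since $\gamma_{D}\Psi_{M_{\kappa}}\mm$ jumps by $\mm$ across $\Gamma$, the target space $\HH_{\loc}(\Rot,\R^3)$ in the statement must be read as $\HH(\Rot,\Omega)\cap\HH_{\loc}(\Rot,\overline{\Omega^c})$ for the magnetic potential (for $\Psi_{E_{\kappa}}$ the tangential trace is continuous, so the global statement does hold), and the hypothesis $\Div_{\Gamma}\jj\in H^{-\frac{1}{2}}(\Gamma)$ is genuinely needed for the term $\kappa^{-1}\nabla\Psi_{\kappa}\Div_{\Gamma}\jj$. No gaps.
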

We define the electric and the magnetic far field operators for a density $\jj$ and an element $\hat{x}$ of the unit sphere $S^2$ of $\R^3$ by
\begin{equation}\label{FF}
\begin{split}
 \Psi_{E_{\kappa}}^{\infty}\,\jj(\hat{x})=&\;\kappa\;\hat{x}\wedge\left(\int_{\Gamma}e^{-i\kappa\hat{x}\cdot y}\jj(y)ds(y)\right)\wedge\hat{x},\\\Psi_{M_{\kappa_{e}}}^{\infty}\,\jj(\hat{x})=&\;i\kappa\;\hat{x}\wedge\left(\int_{\Gamma}e^{-i\kappa\hat{x}\cdot y}\jj(y)ds(y)\right).
\end{split}
\end{equation}
 These operators are bounded from $\TT\HH^{s}(\Div_{\Gamma},\Gamma)$ to \\
 \mbox{$\TT\LL^2(S^2)=\{\hh\in\LL^2(S^2);\;\hh(\hat{x})\cdot\hat{x}=0\}$}, for all $s\in\R$.

We can now define the main boundary integral operators:
\begin{align}
C_{\kappa}\jj(x)&=-\displaystyle{\int_{\Gamma}\nn(x)\wedge\Rot\Rot^x\{G_{a}(\kappa,|x-y|)\jj(y)\}ds(y)}\nonumber\\
\label{Ck}
 &=\left(-\kappa\;\;\nn\wedge V_{\kappa}\,\jj+\dfrac{1}{\kappa}\Rot_{\Gamma}V_{\kappa}\Div_{\Gamma}\jj\right)(x),\\
 \intertext{\quad\; and}
 M_{\kappa}\jj(x)&=-\displaystyle{\int_{\Gamma}\nn(x)\wedge\Rot^x\{G_{a}(\kappa,|x-y|)\jj(y)\}ds(y)}\nonumber\\
\label{MkDkBk}
 &=\;\;({D_{\kappa}}\,\jj-{B_{\kappa}}\,\jj)(x),\\
\intertext{\quad\; with}
{B_{\kappa}}\,\jj(x)&=\displaystyle{\int_{\Gamma}\nabla^xG_{a}(\kappa,|x-y|)\left(\jj(y)\cdot\nn(x)\right)ds(y),}\nonumber\\
{D_{\kappa}}\,\jj(x)&=\displaystyle{\int_{\Gamma}\left(\nabla^xG_{a}(\kappa,|x-y|)\cdot\nn(x)\right)\jj(y)ds(y).}\nonumber
\end{align}

The operators $M_{\kappa}$ and $C_{\kappa}$ are bounded operators from
$\TT\HH^{-\frac{1}{2}}(\Div_{\Gamma},\Gamma)$ to itself.
 
\section{The dielectric scattering problem} \label{ScatProb}

We consider the scattering of time-harmonic waves at a fixed frequency $\omega$ by a three-dimensional bounded and  non-conducting homogeneous dielectric  obstacle represented by the domain $\Omega$.  The electric permittivity $\epsilon$ and the magnetic permeability $\mu$ are assumed to take  constant positive real values in $\Omega$ and $\Omega^c$. Thus they will be discontinuous across the interface $\Gamma$, in general.  The  wave number is given by $\kappa=\omega\sqrt{\mu\epsilon}$. We  distinguish the dielectric quantities related to the interior domain $\Omega$ through the index $i$ and to the exterior domain $\Omega^c$ through the index $e$.
The time-harmonic Maxwell system can be reduced to second order equations for the electric field only.  The time-harmonic dielectric scattering problem is then formulated as follows.
\medskip
 
\textbf{The solution of the dielectric scattering problem :} 
Consider the scattering of a given incident electric wave $\EE^{inc}\in\HH_{\loc}(\Rot,\R^3)$ that satisfies 
$\Rot\Rot \EE^{inc} - \kappa_{e}^2\EE^{inc} =0$ in a neighborhood of $\overline{\Omega}$. The
interior electric   field $\EE^{i}\in \HH(\Rot,\Omega)$ and the exterior electric scattered field $\EE^{s}\in
\HH_{\loc}(\Rot,\overline{\Omega^c})$ satisfy the time-harmonic Maxwell equations
\begin{eqnarray}
\label{(1.2a)}
  \Rot\Rot \EE^{i} - \kappa_{i}^2\EE^{i}& = 0&\text{ in }\Omega,
\\
\label{(1.2b)} 
  \Rot\Rot \EE^{s} - \kappa_{e}^2\EE^{s} &= 0&\text{ in }\Omega^c,
  \end{eqnarray}
the two transmission conditions, 
\begin{eqnarray}
\label{T1}&\,\,\;\;\nn\wedge \EE^{i}=\nn\wedge( \EE^{s}+\EE^{inc})&\qquad\text{ on }\Gamma
\\
\label{T2} &\mu_{i}^{-1}(\nn\wedge\Rot \EE^{i}) = \mu_{e}^{-1}\nn\wedge\Rot(\EE^{s}+\EE^{inc})&\qquad\text{ on }\Gamma
\end{eqnarray} 
and the Silver-M\"uller radiation condition:
\begin{equation}\label{T3}
\lim_{|x|\rightarrow+\infty}|x|\left| \Rot \EE^{s}(x)\wedge\frac{x}{| x |}- i\kappa_{e}\EE^{s}(x) \right| =0.
\end{equation}
 
It is well known that the problem \eqref{(1.2a)}-\eqref{T3} admits a unique solution for any positive real values of the exterior wave number $\kappa_{e}$. We refer the reader to \cite{BuffaHiptmairPetersdorffSchwab, CostabelLeLouer2,MartinOla} for a proof via boundary integral equation methods.\medskip

To analyze the  dependency of the solution on the shape of the scatterer $\Omega$, we will use an  integral representation of the solution,  obtained by the single boundary integral equation method developped by the autors in \cite{CostabelLeLouer2}. It is based on the layer ansatz
for the exterior  electric field $\EE^s$:
\begin{equation} \label{u2c}
\EE^{s} =  -{\Psi_{E_{\kappa_{e}}}}\jj -i\eta{\Psi_{M_{\kappa_{e}}}}{C_{0}^*}\jj\quad \text{ in }\R^3\setminus\overline{\Omega} 
\end{equation}
where $\eta$ is a positive real number, 
$\jj\in \TT\HH^{-\frac{1}{2}}(\Div_{\Gamma},\Gamma)$ and  the operator ${C}_{0}^{*}$ is defined for $\jj\in \TT\HH^{-\frac{1}{2}}(\Div_{\Gamma},\Gamma)$ by 
$$
C_{0}^*\jj=-\nn\wedge V_{0}\,\jj-\Rot_{\Gamma}V_{0}\Div_{\Gamma}\jj.
$$
Thanks to the transmission conditions and the Stratton-Chu formula, we have the integral representation of the interior field
\begin{equation} \label{u12} \EE^{i} =-\frac{1}{\rho}(\Psi_{E_{\kappa_{i}}}\{\gamma_{N_{e}}^c\EE^{inc} +N_{e}\jj\}) -
(\Psi_{M_{\kappa_{i}}}\{\gamma_{D}^c\EE^{inc} + L_{e}\jj\})\text{ in }\Omega\end{equation}
 where $\rho=\dfrac{\kappa_{i}\mu_{e}}{\kappa_{e}\mu_{i}}$ and 
 $$ L_{e}={C_{\kappa_{e}}}+i\eta\left(-\frac{1}{2}\Id+{M_{\kappa_{e}}}\right){C_{0}^*},$$ $$N_{e}=\left(-\frac{1}{2}\Id+{M_{\kappa_{e}}}\right)+i\eta {C_{\kappa_{e}}}{C_{0}^*}.$$
The exterior Dirichlet trace applied to the right-hand side \eqref{u12} vanishes. The density $\jj$ then solves the following boundary integral equation

\begin{equation} \label{SS}
{\SS} \jj \equiv \rho\left(-\tfrac{1}{2}\Id + M_{\kappa_{i}}\right)L_{e}\jj + C_{\kappa_{i}}N_{e}\jj = -\rho\left(-\tfrac{1}{2}\Id + M_{\kappa_{i}}\right)\gamma_{D}\EE^{inc}+ C_{\kappa_{i}}\gamma_{N_{\kappa_{e}}}\EE^{inc}.
\end{equation} 

 \begin{theorem}
\label{thT}
  The operator $\SS$  from $\TT\HH^{-\frac{1}{2}}(\Div_{\Gamma},\Gamma)$ to itself is linear, bounded and invertible. Moreover, given the electric incident field  $\EE^{inc}\!\in\HH_{\loc}(\Rot,\R^3)$, the integral representations {\rm \eqref{u2c}, \eqref{u12}} of $\EE^{i}$ and $\EE^{s}$ give the unique solution of the dielectric scattering problem  for all positive real values of the dielectric constants $\mu_{i}$, $\mu_{e}$, $\epsilon_{i}$ and $\epsilon_{e}$.
 \end{theorem}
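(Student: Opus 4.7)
The plan is to follow the single-source integral equation method developed by the authors in \cite{CostabelLeLouer2}, of which this statement is essentially a reformulation in the present notation. I would break the argument into three steps: boundedness of $\SS$, invertibility of $\SS$, and the fact that the resulting formulas \eqref{u2c} and \eqref{u12} solve the transmission problem uniquely.

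Boundedness is immediate: each of $M_{\kappa_i}, M_{\kappa_e}, C_{\kappa_i}, C_{\kappa_e}$ is bounded on $\TT\HH^{-\frac{1}{2}}(\Div_{\Gamma},\Gamma)$ by the results recalled in Section \ref{BoundIntOp}, and $C_0^*$ maps this space to itself by the mapping properties of $V_0$ together with $\Rot_\Gamma$ and $\Div_\Gamma$. Hence $L_e$, $N_e$ and $\SS$, being finite sums of compositions of such operators, are continuous on the energy space.

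Invertibility is established by proving injectivity and Fredholmness of index zero separately. For injectivity, I take $\jj_0\in\ker\SS$ together with $\EE^{inc}=0$ and define $\EE^s, \EE^i$ by \eqref{u2c} and \eqref{u12}. Lemma \ref{3.2} shows that both fields solve the homogeneous Maxwell equations (with the Silver--M\"uller condition in the exterior), and the ansatz \eqref{u12} is deliberately written as a Stratton--Chu representation so that one of the two transmission conditions \eqref{T1}, \eqref{T2} is satisfied automatically, while the other is equivalent to $\SS\jj_0=0$. Therefore $(\EE^i,\EE^s)$ solves the homogeneous dielectric transmission problem, and the standard uniqueness result (see \cite{Nedelec, CostabelLeLouer2}) forces $\EE^i\equiv 0$ and $\EE^s\equiv 0$. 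From $\EE^s\equiv 0$ one then applies the exterior Dirichlet and Neumann jump relations for $\Psi_{E_{\kappa_e}}$ and $\Psi_{M_{\kappa_e}}$ to the ansatz \eqref{u2c}; the CFIE-like stabilization by $i\eta C_0^*$ is precisely what guarantees $\jj_0=0$ also at frequencies that would otherwise be interior Maxwell resonances for $\kappa_e$. For Fredholmness of index zero, I would exploit that $M_{\kappa_i}-M_{\kappa_e}$ and $C_{\kappa_i}-C_{\kappa_e}$ are compact on the energy space (their kernels are smoother by one order) to reduce $\SS$ modulo compact terms to an operator depending only on $\kappa_e$, which can be analysed by means of the Calder\'on identities for the exterior Maxwell system.

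For the second statement, once $\jj=\SS^{-1}(\mathrm{rhs})$ is defined, the integral representations \eqref{u2c}, \eqref{u12} yield a pair $(\EE^i,\EE^s)$ that satisfies the Maxwell equations and the Silver--M\"uller condition by Lemma \ref{3.2}, the first transmission condition by the Stratton--Chu structure of \eqref{u12}, and the second transmission condition by the equation $\SS\jj=\mathrm{rhs}$. Uniqueness of this pair is then the uniqueness of the transmission problem itself. The hard part of the whole argument is the injectivity step for $\SS$: one has to prove, at any positive wave number $\kappa_e$, that the single-source ansatz with coupling $i\eta C_0^*$ admits no nontrivial density in its kernel, i.e.\ that this stabilization removes every interior resonance. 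This hinges on the mapping properties of the static operator $C_0^*$ combined with the Calder\'on projectors for exterior Maxwell problems, and it is the principal technical point carried out in \cite{CostabelLeLouer2}.
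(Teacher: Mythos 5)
The paper does not actually prove Theorem \ref{thT}; it imports it from \cite{CostabelLeLouer2}, and your outline reproduces the single-source (Kleinman--Martin type) strategy of that reference: boundedness from the mapping properties of Section \ref{BoundIntOp}, injectivity from uniqueness of the transmission problem together with the $i\eta C_0^*$ stabilization, Fredholmness of index zero, and the Stratton--Chu/Calder\'on argument showing that \eqref{u2c}, \eqref{u12} then solve the transmission problem. The architecture is the right one, and you correctly identify the injectivity step as the principal technical point.

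One step, however, fails as written: the claim that $C_{\kappa_i}-C_{\kappa_e}$ is compact on $\TT\HH^{-\frac{1}{2}}(\Div_{\Gamma},\Gamma)$ because ``the kernels are smoother by one order''. That reasoning is valid for $M_{\kappa_i}-M_{\kappa_e}$, but the representation \eqref{Ck} of $C_{\kappa}$ carries $\kappa$-dependent scalar coefficients in front of operators that are \emph{not} compact on the energy space. Writing $\frac{1}{\kappa_i}V_{\kappa_i}-\frac{1}{\kappa_e}V_{\kappa_e}=\frac{1}{\kappa_i}\left(V_{\kappa_i}-V_{\kappa_e}\right)+\left(\frac{1}{\kappa_i}-\frac{1}{\kappa_e}\right)V_{\kappa_e}$, only the first summand has a smooth kernel; the second contributes $\left(\frac{1}{\kappa_i}-\frac{1}{\kappa_e}\right)\Rot_{\Gamma}V_{\kappa_e}\Div_{\Gamma}$, a nonzero multiple of the non-compact hypersingular part of $C_{\kappa_e}$ whenever $\kappa_i\neq\kappa_e$ (and the term $-\kappa\,\nn\wedge V_{\kappa}$ likewise gains no regularity in the $\Div_{\Gamma}$-component). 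So $\SS$ cannot be reduced modulo compacts to a single-wavenumber operator this way; the Fredholm analysis in \cite{CostabelLeLouer2} instead exploits the structure of the products $C_{\kappa_i}N_e$ and $\left(-\frac{1}{2}\Id+M_{\kappa_i}\right)L_e$, where $\Div_{\Gamma}\Rot_{\Gamma}=0$ annihilates the composition of two hypersingular parts and the Calder\'on relations identify the remaining principal part as invertible plus compact. A smaller imprecision: equation \eqref{SS} is the condition that the exterior Dirichlet trace of the right-hand side of \eqref{u12} vanishes, which makes the Stratton--Chu ansatz self-consistent and yields \emph{both} transmission conditions at once; it is not the case that one condition is automatic and the other is equivalent to the integral equation.
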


An important quantity that is of interest in many shape optimization problems is the far field pattern of the electric field, defined on the unit sphere of $\R^3$ by
$$
\EE^{\infty}(\hat{x})=\lim_{|x|\rightarrow\infty}4\pi|x|\dfrac{\EE^s(x)}{e^{i\kappa_{e}|x|}},\qquad \text{ with }\dfrac{x}{|x|}=\hat{x}.
$$
We have $\EE^{\infty}\in\TT\LL^2(S^2)\cap\mathscr{C}^{\infty}(S^2,\C^3)$.
To obtain the integral representation of the far field $\EE^{\infty}$ of the solution, it suffices to replace in \eqref{u2c} the potential operators $\Psi_{E_{\kappa_{e}}}$ and $\Psi_{M_{\kappa_{e}}}$ by the far field operators $\Psi_{E_{\kappa_{e}}}^{\infty}$ and $\Psi_{M_{\kappa_{e}}}^{\infty}$ defined in  \eqref{FF}, respectively.

In the method we have described, the solution $\EE=(\EE^{i},\EE^{s})$ and the far field $\EE^{\infty}$ are constructed from operators defined by integrals on the boundary $\Gamma$ and the incident field. For a fixed incident field and fixed constants $\kappa_{i}$, $\kappa_{e}$, $\mu_{i}$, $\mu_{e}$,  these quantities therefore depend on the geometry of the boundary $\Gamma$ of the scatterer $\Omega$ only.  In the sequel we analyze the $\Gamma$-dependence of the solution following the definition of  shape derivatives and the notations of section 4 of the paper \cite{CostabelLeLouer}.

\section{Shape dependence via Helmholtz decomposition}\label{Helmholtzdec}

Let us fix a reference domain $\Omega$. We consider  variations of $\Omega$ generated by transformations of the form $x\mapsto x+r(x)$ of points $x\in\R^3$, where $r$ is a smooth vector function defined on $\Gamma$. This transformation deforms the domain $\Omega$  in a  domain $\Omega_{r}$ of boundary $\Gamma_{r}$. The functions $r$ are assumed to belong to the Fr\'echet space $\mathscr{C}^{\infty}(\Gamma,\R^3)$. For $\varepsilon>0$ and some metric $d_{\infty}$ on $\mathscr{C}^{\infty}(\Gamma,\R^3)$, we set
$$
 B^{\infty}(0,\varepsilon)= \left\{r\in\mathscr{C}^{\infty}(\Gamma,\R^3),\; d_{\infty}(0,r)<\varepsilon\right\}.
$$ 
In the following, we choose $\varepsilon$ small enough so that for any $r\in B^{\infty}(0,\varepsilon)$, $(\Id+r)$ is a diffeomorphism from $\Gamma$ to  $\Gamma_{r}=(\Id+r)\Gamma=\left\{x_{r}=x+r(x); x\in\Gamma\right\}$.

The aim of this paper is to study the shape differentiability, that is, the G\^ateaux differentiability with respect to $r$, of the functionals mapping $r$ to the solution 
$$
  (\mathscr{E}^{i}(r),\mathscr{E}^{s}(r))=\left(\EE^i(\Gamma_{r}),\EE^s(\Gamma_{r})\right)
$$
of the dielectric scattering problem with obstacle $\Omega_{r}$, and to the far field 
$\mathscr{E}^{\infty}(r)=\EE^\infty(\Gamma_{r})$.

In the following, we use the superscript $r$ for integral operators and trace mappings pertaining to $\Gamma_{r}$, while functions defined on $\Gamma_{r}$ will often have a subscript $r$. According to the boundary integral equation method described in the previous section, we have 
\begin{equation}
\mathscr{E}^s(r)= \left(-\Psi_{E_{\kappa_{e}}}^r-i\eta\Psi_{M_{\kappa_{e}}}^rC_{0}^{*,r}\right)\jj_{r}\qquad\text{ in }\Omega_{r}^c=\R^3\backslash\overline{\Omega_{r}},
\end{equation}
where $\jj_{r}$ solves the integral equation
$$
\SS^r\jj_{r}=-\rho\left(-\frac{1}{2}\Id+M_{\kappa_{i}}^r\right)\gamma_{D}^r\EE^{inc}-C_{\kappa_{i}}^r\gamma_{N_{\kappa_{e}}}^r\EE^{inc},
$$
and 
\begin{equation}\label{risr2} 
 \mathscr{E}^{i}(r) = -\frac{1}{\rho}\Psi_{E_{\kappa_{i}}}^r\gamma_{N_{\kappa_{e}}}^{c,r}\mathscr{E}^{tot}(r) - \Psi_{M_{\kappa_{i}}}^r\gamma_{D}^{c,r}\mathscr{E}^{tot}(r)\qquad\text{ in }\Omega_{r},
\end{equation}
with 
\begin{equation}\label{risr1}
\mathscr{E}^{tot}(r)=\EE^{inc}+\mathscr{E}^s(r)\,.
\end{equation}
The far field pattern of the dielectric scattering problem by the interface $\Gamma_{r}$ is
 $$
\mathscr{E}^{\infty}(r)= \left(-\Psi^{\infty,r}_{E_{\kappa_{e}}}-i\eta\Psi_{M_{\kappa_{e}}}^{\infty,r}C_{0}^{*,r}\right)\jj_{r}.
 $$
 As defined in \eqref{SS}, the operator  $\SS^r$  is composed of the operators  $C_{\kappa_{e}}^r$, $M_{\kappa_{e}}^r$, $C_{\kappa_{i}}^r$ et $M_{\kappa_{i}}^r$, which are all bounded operators on the space $\TT\HH^{-\frac{1}{2}}(\Div_{\Gamma_{r}},\Gamma_{r})$. Therefore we have to study  the G\^ateaux differentiability of  the following mappings on $B^{\infty}(0,\varepsilon)$
 $$
 \begin{array}{ll}
 r\mapsto M_{\kappa}^r, C_{\kappa}^r\in\mathscr{L}(\TT\HH^{-\frac{1}{2}}(\Div_{\Gamma_{r}},\Gamma_{r}))
 \\
 r\mapsto \Psi_{M_{\kappa}}^r,\Psi_{E_{\kappa}}^r\in\mathscr{L}(\TT\HH^{-\frac{1}{2}}(\Div_{\Gamma_{r}},\Gamma_{r}),\HH(\Rot,\Omega_{r})\cup\HH_{loc}(\Rot,\overline{\Omega_{r}^c}))
 \\
 r\mapsto \Psi^{\infty,r}_{M_{\kappa}},\Psi^{\infty,r}_{E_{\kappa}}\in\mathscr{L}(\TT\HH^{-\frac{1}{2}}(\Div_{\Gamma_{r}},\Gamma_{r}),\TT\LL^2(S^2)\cap\mathscr{C}^{\infty}(S^2,\C^3)).
\end{array}
$$
Finally, the differentiability properties of the mapping $r\mapsto C_{0}^{*r}$ can be deduced from those of the mapping $r\mapsto C_{\kappa}^r$.

In this approach, several difficulties have to be overcome. The first one is that if we want to find the derivatives of the solution of the scattering problem, which is given as a product of operators and of their inverses, all defined on the same space $\TT\HH^{-\frac{1}{2}}(\Div_{\Gamma},\Gamma)$ (for the derivative at $r=0$), it is necessary to prove that the derivatives themselves are defined as bounded operators on the same space, too. On the other hand, the very definition of the differentiability of operators defined on $\TT\HH\sp{-\frac{1}{2}}(\Div_{\Gamma},\Gamma)$ raises non-trivial questions. For reducing the variable space  $\TT\HH^{-\frac{1}{2}}(\Div_{\Gamma_{r}},\Gamma_{r})$ to a fixed reference space, it is not sufficient, as we did in the scalar case studied in the first part \cite{CostabelLeLouer}, to use a change of variables. Let us discuss this question: \emph{How to define the shape derivative of operators defined on the variable space  $\TT\HH^{-\frac{1}{2}}(\Div_{\Gamma_{r}},\Gamma_{r})$?} in detail.
 
We recall the notation $\tau_{r}$ for the ``pullback'' induced by the change of variables. It maps a function $u_{r}$ defined on $\Gamma_{r}$ to the function $\tau_{r}u_{r}=u_{r}\circ(\Id+r)$ defined on $\Gamma$. For $r\in B^{\infty}(0,\varepsilon)$, the transformation $\tau_{r}$ is an isomorphism from   $H^t(\Gamma_{r})$ to  $H^t(\Gamma)$. We have
$$
(\tau_{r}u_{r})(x)=u_{r}(x+r(x))\text{ and }(\tau_{r}^{-1}u)(x_{r})=u(x).
$$
The natural idea to use this for a product of operators, proposed by Potthast in \cite{Potthast2} in the acoustic case, is to insert the identity  $\tau_{r}^{-1}\tau_{r}=\Id_{\HH^{-\frac{1}{2}}(\Gamma_{r})}$ between the factors. This allows to consider integral operators on the fixed  boundary $\Gamma$ only and to would require study the differentiability of the mappings
\begin{equation}
\label{taurM}
r\mapsto\tau_{r}C_{\kappa}^{r}\tau_{r}^{-1},
\quad
r\mapsto\tau_{r}M_{\kappa}^{r}\tau_{r}^{-1},
\quad
r\mapsto\Psi_{E_{\kappa}}^{r}\tau_{r}^{-1},
\quad
r\mapsto\Psi_{M_{\kappa}}^{r}\tau_{r}^{-1},
\end{equation}
but as has been already pointed out in \cite{Potthast3}, difficulties remain. The main cause for this is that $\tau_{r}$ does not map vector fields tangential to $\Gamma_{r}$ to vector fields tangential to $\Gamma$, and in particular,
$$
\tau_{r}(\TT\HH^{-\frac{1}{2}}(\Div_{\Gamma_{r}},\Gamma_{r}))\not=\TT\HH^{-\frac{1}{2}}(\Div_{\Gamma},\Gamma).
$$
This will lead to a loss of regularity if we simply try to differentiate the mappings in \eqref{taurM}. Let us explain this for the operator $M_{\kappa}$. The operator $M_{\kappa}^r$, when acting on vector fields tangential to $\Gamma_{r}$, has additional regularity like what is known for the scalar double layer potential, namely it has a pseudo-homogeneous kernel of class $-1$, whereas it is of class $0$ when considered on all vector fields (see \eqref{MkDkBk}).   
If we differentiate the kernel  of $\tau_{r}M_{\kappa}^r\tau_{r}^{-1}$, we will not obtain a pseudo-homogeneous kernel of class $-1$ on the set of vector fields tangential to $\Gamma$, so that we find a loss of regularity for the  G\^ateaux derivative of $\tau_{r}M_{\kappa}^r\tau_{r}^{-1}$.

For mapping tangent vector fields to tangent vector fields, the idea of Potthast was to use projectors from one tangent plane to the other. Let us denote by $\pi(r)$ the pullback $\tau_{r}$ followed by orthogonal projection to the tangent plane to $\Gamma$. This maps any vector function on $\Gamma_{r}$ to a tangential vector function on $\Gamma$, and we have 
$$
(\pi(r)\uu_{r})(x)=\uu_{r}(x+r(x))-\left(\nn(x)\cdot \uu_{r}(x+r(x))\right)\nn(x).
$$ 
The restriction of $\pi(r)$ to tangential functions on $\Gamma_{r}$ admits an inverse, denoted by $\pi^{-1}(r)$, if $r$ is sufficiently small. The mapping $\pi^{-1}(r)$ is defined by 
$$
(\pi^{-1}(r)\uu)(x+r(x))=\uu(x)-\nn(x)\frac{\nn_{r}(x+r(x))\cdot \uu(x)}{\nn_{r}(x+r(x))\cdot \nn(x)},
$$ 
and it is easy to see that $\pi(r)$ is an isomorphism between he space of continuous tangential vector functions on $\Gamma_{r}$ and on $\Gamma$, and for any $t$ between $\TT\HH^t(\Gamma_{r})$ and $\TT\HH^t(\Gamma)$.

In the framework of continuous tangential functions it  suffices to insert the product $\pi^{-1}(r)\pi(r)=\Id_{\TT\mathscr{C}^0(\Gamma_{r})}$ between factors in the integral representation of the solution to reduce the analysis to the study of boundary integral operators defined on $\TT\mathscr{C}^0(\Gamma)$, which does not  depend  on $r$. In our case,  we would obtain operators defined on the space
$$
\pi(r)\left(\TT\HH^{-\frac{1}{2}}(\Div_{\Gamma_{r}},\Gamma_{r})\right)=\left\{u\in \TT\HH^{-\frac{1}{2}}(\Gamma), \Div_{\Gamma_{r}}(\pi^{-1}(r)\uu)\in H^{-\frac{1}{2}}(\Gamma_{r})\right\},
$$
which still depends on $r$ and is, in general, different from 
$\TT\HH^{-\frac{1}{2}}(\Div_{\Gamma},\Gamma)$.

We propose a different approach, using the Helmholtz decomposition of the space $\TT\HH^{-\frac{1}{2}}(\Div_{\Gamma_{r}},\Gamma_{r})$ to  introduce a new   pullback operator $\Pp_{r}$ that defines an isomorphism between $\TT\HH^{-\frac{1}{2}}(\Div_{\Gamma_{r}},\Gamma_{r})$ and $\TT\HH^{-\frac{1}{2}}(\Div_{\Gamma},\Gamma)$.
 
Recall that we assume that the boundary $\Gamma$ is smooth and simply connected. We have the following decomposition. We refer to \cite{delaBourdonnaye} for the proof.
\begin{theorem} 
The Hilbert space $\TT\HH^{-\frac{1}{2}}(\Div_{\Gamma},\Gamma)$ admits the following Helmholtz decomposition:\begin{equation}
\TT\HH^{-\frac{1}{2}}(\Div_{\Gamma},\Gamma)= \nabla_{\Gamma} H^{\frac{3}{2}}(\Gamma) \oplus {\Rot}_{\Gamma} H^{\frac{1}{2}}(\Gamma) .
\end{equation}
\end{theorem}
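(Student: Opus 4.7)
The plan is to construct the decomposition by solving two scalar Poisson problems on $\Gamma$: given $\jj \in \TT\HH^{-\frac{1}{2}}(\Div_{\Gamma},\Gamma)$, the natural candidates for the scalar potentials are $p = \Delta_{\Gamma}^{-1}(\Div_{\Gamma}\jj)$ and $q = -\Delta_{\Gamma}^{-1}(\rot_{\Gamma}\jj)$, so that $\Div_{\Gamma}\nabla_{\Gamma} p = \Div_{\Gamma}\jj$ and, via $\Delta_{\Gamma} = -\rot_{\Gamma}\Rot_{\Gamma}$, also $\rot_{\Gamma}\Rot_{\Gamma} q = \rot_{\Gamma}\jj$. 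I would first verify the existence of $p$ and $q$ in the correct Sobolev spaces using Lemma \ref{LapBel}, then show that the remainder $\vv := \jj - \nabla_{\Gamma} p - \Rot_{\Gamma} q$ vanishes, and finally establish directness of the sum.

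For the existence step, the zero-mean compatibility conditions required by Lemma \ref{LapBel} follow from the duality formulas \eqref{dualgrad} and \eqref{dualrot} applied to the constant test function $\varphi \equiv 1$: one gets $\int_{\Gamma}\Div_{\Gamma}\jj\,ds = -\int_{\Gamma}\jj\cdot\nabla_{\Gamma}1\,ds = 0$ and likewise $\int_{\Gamma}\rot_{\Gamma}\jj\,ds = 0$. Since the hypothesis gives $\Div_{\Gamma}\jj \in H^{-\frac{1}{2}}_{*}(\Gamma)$, Lemma \ref{LapBel} with $s=-\frac{1}{2}$ produces a unique $p \in H^{\frac{3}{2}}(\Gamma)/\C$. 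Since $\jj \in \TT\HH^{-\frac{1}{2}}(\Gamma)$, the remark following Lemma \ref{LapBel} gives $\rot_{\Gamma}\jj \in H^{-\frac{3}{2}}_{*}(\Gamma)$, and Lemma \ref{LapBel} with $s=-\frac{3}{2}$ produces a unique $q \in H^{\frac{1}{2}}(\Gamma)/\C$. Both $\nabla_{\Gamma} p \in \TT\HH^{\frac{1}{2}}(\Gamma) \subset \TT\HH^{-\frac{1}{2}}(\Gamma)$ and $\Rot_{\Gamma} q \in \TT\HH^{-\frac{1}{2}}(\Gamma)$ have surface divergences in $H^{-\frac{1}{2}}(\Gamma)$ (the second is zero by \eqref{eqd2}), so both summands belong to $\TT\HH^{-\frac{1}{2}}(\Div_{\Gamma},\Gamma)$.

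Setting $\vv := \jj - \nabla_{\Gamma} p - \Rot_{\Gamma} q$, the construction together with the identities \eqref{eqd2} yields $\Div_{\Gamma}\vv = 0$ in $H^{-\frac{1}{2}}(\Gamma)$ and $\rot_{\Gamma}\vv = 0$ in $H^{-\frac{3}{2}}(\Gamma)$. The main obstacle is to show that such a ``harmonic'' tangent field necessarily vanishes; this is where the hypothesis of simple connectivity of $\Gamma$ enters essentially. The combined operator $(\Div_{\Gamma},\rot_{\Gamma})$ acting on tangent fields is elliptic (it is, up to musical isomorphisms, the Hodge–de Rham operator on $1$-forms), so elliptic regularity promotes $\vv$ to a smooth tangent field. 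The equation $\rot_{\Gamma}\vv = 0$ then says that the $1$-form associated to $\vv$ is closed, and simple connectivity of $\Gamma$ forces it to be exact, so $\vv = \nabla_{\Gamma} f$ for some smooth $f$; finally $\Delta_{\Gamma} f = \Div_{\Gamma}\vv = 0$ on the closed compact surface $\Gamma$ forces $f$ to be constant, whence $\vv = 0$. For directness: if $\nabla_{\Gamma} p = \Rot_{\Gamma} q$ in $\TT\HH^{-\frac{1}{2}}(\Div_{\Gamma},\Gamma)$, applying $\Div_{\Gamma}$ yields $\Delta_{\Gamma} p = 0$, so $p$ is constant and $\nabla_{\Gamma} p = 0$, and symmetrically $q$ is constant. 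Observe moreover that the two summands are $\LL^{2}$-orthogonal by \eqref{dualgrad} together with $\Div_{\Gamma}\Rot_{\Gamma}=0$, which justifies the notation $\oplus$.
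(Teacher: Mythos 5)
Your proof is correct. The paper does not actually prove this theorem --- it refers to \cite{delaBourdonnaye} --- but your argument is the standard one and uses exactly the ingredients assembled in Section \ref{BoundIntOp}: the invertibility of $\Delta_{\Gamma}$ on zero-mean data from Lemma \ref{LapBel}, the duality formulas \eqref{dualgrad}--\eqref{dualrot} with $\varphi\equiv1$ for the compatibility conditions, the identities \eqref{eqd2}, and the vanishing of harmonic tangent fields on a simply connected closed surface. The one point worth stating explicitly is that your construction $p=\Delta_{\Gamma}^{-1}\Div_{\Gamma}\jj$, $q=-\Delta_{\Gamma}^{-1}\rot_{\Gamma}\jj$ also shows the two projections are bounded, so the decomposition is topological --- which is what the paper actually needs when it defines the pullback $\Pp_{r}$ in \eqref{Prsmooth}.
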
 
Since $\varepsilon$ is chosen such that for all $r\in B^{\infty}(0,\varepsilon)$ the surfaces  $\Gamma_{r}$ are still regular and simply connected, the spaces $\TT\HH^{-\frac{1}{2}}(\Div_{\Gamma_{r}},\Gamma_{r})$ admit similar decompositions.

The operator of change of variables $\tau_{r}$ is an isomorphism from $H^{\frac{3}{2}}(\Gamma_{r})$ to $H^{\frac{3}{2}}(\Gamma)$ and from $H^{\frac{1}{2}}(\Gamma_{r})$ to $H^{\frac{1}{2}}(\Gamma)$, and it maps constant functions to constant functions.
Let $\jj_{r}\in \TT\HH^{-\frac{1}{2}}(\Div_{\Gamma_{r}},\Gamma_{r})$ and let $\jj_{r}=\nabla_{\Gamma_{r}}\;p_{r}+\Rot_{\Gamma_{r}}\;q_{r}$ be its Helmholtz decomposition. The scalar functions $p_{r}$ and $q_{r}$ are determined uniquely up to additive constants. The following operator :   
\begin{equation}
\label{Prsmooth}
\begin{array}{llcl}
\Pp_{r}:&\TT\HH^{-\frac{1}{2}}(\Div_{\Gamma_{r}},\Gamma_{r})&\longrightarrow& \TT\HH^{-\frac{1}{2}}(\Div_{\Gamma},\Gamma)
\\
&\jj_{r}=\nabla_{\Gamma_{r}}\;p_{r}+\Rot_{\Gamma_{r}}\;q_{r}&\mapsto&\jj=\nabla_{\Gamma}\;(\tau_{r}p_{r})+\Rot_{\Gamma}\;(\tau_{r}q_{r})
\end{array}
\end{equation}
is therefore well defined, linear, continuous and invertible. Its inverse $\Pp_{r}^{-1}$ is given by
\begin{equation} 
\begin{array}{llcl}
\Pp_{r}^{-1}:&\TT\HH^{-\frac{1}{2}}(\Div_{\Gamma},\Gamma)&\longrightarrow &\TT\HH^{-\frac{1}{2}}(\Div_{\Gamma_{r}},\Gamma_{r})
\\
&\jj=\nabla_{\Gamma}\;p+\Rot_{\Gamma}\;q&\mapsto&\jj_{r}= \nabla_{\Gamma_{r}}\;\tau_{r}^{-1}(p)+\Rot_{\Gamma_{r}}\;\tau_{r}^{-1}(q).
\end{array}
\end{equation}
Obviously for $r=0$ we have $\Pp_{r}=\Pp_{r}^{-1}=\Id_{\TT\HH^{-\frac{1}{2}}(\Div_{\Gamma},\Gamma)}$. 
We can now insert the  identity $\Id_{\TT\HH^{-\frac{1}{2}}(\Div_{\Gamma_{r}},\Gamma_{r})}=\Pp_{r}^{-1}\Pp_{r}$ between factors in the integral  representation  of the  solution  $(\mathscr{E}^{i}(r),\mathscr{E}^s(r))$, and we are finally led to study the G\^ateaux differentiability properties of the following mappings, defined on $r$-independent spaces. 
 \begin{equation}\label{reformulation}
 \begin{array}{lclll}
 \;  B^{\infty}(0,\varepsilon) &\rightarrow& \mathscr{L}(\TT\HH^{-\frac{1}{2}}(\Div_{\Gamma},\Gamma), \HH(\Rot, K_{p})) & : & r\mapsto \Psi^{r}_{E_{\kappa}}\Pp_{r}^{-1} 
 \\ 
 \; B^{\infty}(0,\varepsilon)& \rightarrow& \mathscr{L}(\TT\HH^{-\frac{1}{2}}(\Div_{\Gamma},\Gamma), \HH(\Rot,K_{p})) & : & r\mapsto \Psi^{r}_{M_{\kappa}}\Pp_{r}^{-1}
 \\
 \;   B^{\infty}(0,\varepsilon_{p})& \rightarrow& \mathscr{L}(\TT\HH^{s}(\Div_{\Gamma},\Gamma), \TT\HH^{-\frac{1}{2}}(\Div_{\Gamma},\Gamma))& : & r\mapsto \Pp_{r}M^{r}_{\kappa}\Pp_{r}^{-1}
 \\
  \;B^{\infty}(0,\varepsilon_{p})&\rightarrow& \mathscr{L}(\TT\HH^{-\frac{1}{2}}(\Div_{\Gamma},\Gamma), \TT\HH^{-\frac{1}{2}}(\Div_{\Gamma},\Gamma))& : & r\mapsto \Pp_{r}C^{r}_{\kappa}\Pp_{r}^{-1}
\end{array}
\end{equation}
where $K_{p}$ is a compact subset of $\R^3\backslash\Gamma$.
These mappings are composed of scalar singular integral operators, the shape derivatives of which we studied in the first part \cite{CostabelLeLouer}, of surface differential operators, and of the inverse of the Laplace--Beltrami operator, which appears in the construction of the Helmholtz decomposition. 

Let us look at the representation of the operators in \eqref{reformulation} in terms of the Helmholtz decomposition.

\noindent\textbf{Helmholtz representation of $\Psi^{r}_{E_{\kappa}}\Pp_{r}^{-1}$}\\
The operator $\Psi^{r}_{E_{\kappa}}\mathbf{P_{r}}^{-1}$  is defined for $\jj=\nabla_{\Gamma}\;p+\mathbf{\Rot}_{\Gamma}\;q\in \TT\HH^{-\frac{1}{2}}(\Div_{\Gamma},\Gamma)$ and $x\in K_{p}$ by:  
\begin{equation*}\label{PE}
\begin{split}
\Psi^{r}_{E_{\kappa}}\Pp_{r}^{-1}\jj(x) =&\;\kappa\displaystyle{\int_{\Gamma_{r}}G_{a}(\kappa,|x-y_{r}|)\left(\nabla_{\Gamma_{r}}\tau_{r}^{-1}p\right)(y_{r})ds(y_{r})}
\\
&\;+\kappa\displaystyle{\int_{\Gamma_{r}}G_{a}(\kappa,|x-y_{r}|)\left(\Rot_{\Gamma_{r}} \tau_{r}^{-1}q\right)(y_{r})ds(y_{r})} 
\\
 &\;+ \dfrac{1}{\kappa}\nabla\displaystyle{\int_{\Gamma_{r}}G_{a}(\kappa,|x-y_{r}|)\left(\Delta_{\Gamma_{r}}\tau_{r}^{-1} p\right)(y_{r})ds(y_{r})}.
 \end{split}
 \end{equation*}
\medskip

\noindent\textbf{Helmholtz representation of  $\Psi^{r}_{M_{\kappa}}\Pp_{r}^{-1}$}\\
 The operator $\Psi^{r}_{M_{\kappa}}\mathbf{P_{r}}^{-1}$  is defined for $\jj=\nabla_{\Gamma}\;p+\mathbf{\Rot}_{\Gamma}\;q\in \TT\HH^{-\frac{1}{2}}(\Div_{\Gamma},\Gamma)$ and $x\in K_{p}$ by:   
\begin{equation*}\label{PM}
\begin{split}
\Psi^{r}_{M_{\kappa}}\Pp_{r}^{-1}\jj(x)=&\;\Rot\displaystyle{\int_{\Gamma_{r}}G_{a}(\kappa,|x-y_{r}|)\left(\nabla_{\Gamma_{r}}\tau_{r}^{-1}p\right)(y_{r})ds(y_{r})}
\\
&+\Rot\displaystyle{\int_{\Gamma_{r}}G_{a}(\kappa,|x-y_{r}|)\left(\mathbf{\Rot}_{\Gamma_{r}}\tau_{r}^{-1} q\right)(y_{r})(y_{r})ds(y_{r})}.
\end{split}
\end{equation*}
\medskip

\noindent\textbf{Helmholtz representation of $\Pp_{r}C^{r}_{\kappa}\Pp_{r}^{-1}$}\\ 
Recall that for $\jj_{r}\in\TT\HH^{-\frac{1}{2}}(\Div_{\Gamma_{r}},\Gamma_{r})$, the operator $C_{\kappa}^r$ is defined by
\begin{equation*}
\begin{array}{ll}
C^{r}_{\kappa}\jj_{r}(x_{r})=&-\kappa\,\nn_{r}(x_{r})\wedge\displaystyle{\int_{\Gamma_{r}}G_{a}(\kappa,|x_{r}-y_{r}|)\jj_{r}(y_{r})ds(y_{r})}
\vspace{2mm}\\
&-\dfrac{1}{\kappa}\nn_{r}(x_{r})\wedge\nabla_{\Gamma_{r}}^{x_{r}}\displaystyle{\int_{\Gamma_{r}}G_{a}(\kappa,|x_{r}-y_{r}|)\Div_{\Gamma_{r}}\jj_{r}(y_{r})ds(y_{r})}.
\end{array}
\end{equation*} 
We want to write  ${C^{r}_{\kappa}}\jj_{r}$ in the form $\nabla_{\Gamma_{r}}P_{r}+\Rot_{\Gamma_{r}}Q_{r}$. 
Using formulas \eqref{eqd2}--\eqref{eqd3}, we find
$$ 
\Div_{\Gamma_{r}}{C^{r}_{\kappa}}\jj_{r}=\Delta_{\Gamma_{r}} P_{r}\;\text{ and } \rot_{\Gamma_{r}}{C^{r}_{\kappa}}\jj_{r}=-\Delta_{\Gamma_{r}} Q_{r}.
$$
As a consequence we have for $x_{r}\in\Gamma_{r}$
\begin{equation}
\begin{array}{ll}
P_{r}(x_{r})=&-\kappa\;\Delta_{\Gamma_{r}}^{-1}\Div_{\Gamma_{r}}\left(\nn_{r}(x_{r})\wedge\displaystyle{\int_{\Gamma_{r}}G_{a}(\kappa,|x_{r}-y_{r}|)\jj_{r}(y_{r})ds(y_{r})}\right)
\end{array}
\end{equation}
and
\begin{equation*}
\begin{array}{rl}
Q_{r}(x_{r})=&-\kappa\;(-\Delta_{\Gamma_{r}}^{-1})\rot_{\Gamma_{r}}\left(\nn_{r}(x_{r})\wedge\displaystyle{\int_{\Gamma_{r}}G_{a}(\kappa,|x_{r}-y_{r}|)\jj_{r}(y_{r})ds(y_{r})}\right)
\\
&\mkern-20mu
-\dfrac{1}{\kappa}(-\Delta_{\Gamma_{r}})\rot_{\Gamma_{r}}(-\Rot_{\Gamma_{r}})\displaystyle{\int_{\Gamma_{r}}\!\!G_{a}(\kappa,|x_{r}-y_{r}|)\Div_{\Gamma_{r}}\jj_{r}(y_{r})ds(y_{r})}
\\
=&\kappa\;\Delta_{\Gamma_{r}}^{-1}\rot_{\Gamma_{r}}\left(\nn_{r}(x_{r})\wedge\displaystyle{\int_{\Gamma_{r}}G_{a}(\kappa,|x_{r}-y_{r}|)\jj_{r}(y_{r})ds(y_{r})}\right)
\\
&+\dfrac{1}{\kappa}\displaystyle{\int_{\Gamma_{r}}G_{a}(\kappa,|x_{r}-y_{r}|)\Div_{\Gamma_{r}}\jj_{r}(y_{r})ds(y_{r})}.
\end{array}
\end{equation*}
The operator $\Pp_{r}C^{r}_{\kappa}\Pp_{r}^{-1}$ is defined for $\jj=\nabla_{\Gamma}\;p+\mathbf{\Rot}_{\Gamma}\;q\in \TT\HH^{-\frac{1}{2}}(\Div_{\Gamma},\Gamma)$ by 
$$
\Pp_{r}C^{r}_{\kappa}\Pp_{r}^{-1}=\nabla_{\Gamma}P(r)+\Rot_{\Gamma}Q(r),
$$ 
with
\begin{multline*}
P(r)(x)=\\
 -\kappa\left(\tau_{r}\Delta_{\Gamma_{r}}^{-1}\Div_{\Gamma_{r}}\tau_{r}^{-1}\right)\Big((\tau_{r}\nn_{r})(x)\wedge\tau_{r}\Big\{\displaystyle{\int_{\Gamma_{r}}G_{a}(\kappa,|\cdot-y_{r}|)(\nabla_{\Gamma_{r}}\tau_{r}^{-1}p)(y_{r})ds(y_{r})}
\\
 +\displaystyle{\int_{\Gamma_{r}}G_{a}(\kappa,|\cdot-y_{r}|)(\Rot_{\Gamma_{r}}\tau_{r}^{-1}q)(y_{r})ds(y_{r})}\Big\}(x)\Big)
\end{multline*}
and
\begin{multline*}
Q(r)(x)=\\
\kappa\left(\tau_{r}\Delta_{\Gamma_{r}}^{-1}\rot_{\Gamma_{r}}\tau_{r}^{-1}\right)\Big((\tau_{r}\nn_{r})(x)\wedge\tau_{r}\Big\{\displaystyle{\int_{\Gamma_{r}}
\!\!
G_{a}(\kappa,|\cdot-y_{r}|)(\nabla_{\Gamma_{r}}\tau_{r}^{-1}p)(y_{r})ds(y_{r})}
\\
 +\displaystyle{\int_{\Gamma_{r}}G_{a}(\kappa,|\cdot-y_{r}|)(\Rot_{\Gamma_{r}}\tau_{r}^{-1}q)(y_{r})ds(y_{r})}\Big\}(x)\Big)
\\
+\dfrac{1}{\kappa}\tau_{r}\Big(\displaystyle{\int_{\Gamma_{r}}G_{a}(\kappa,|\cdot-y_{r}|)(\Delta_{\Gamma_{r}}\tau_{r}^{-1}p)(y_{r})ds(y_{r})}\Big)(x).
\end{multline*}
\medskip

\noindent\textbf{Helmholtz representation of $\Pp_{r}M^{r}_{\kappa}\Pp_{r}^{-1}$}\\ 
Recall that for all $\jj_{r}\in\TT\HH^{-\frac{1}{2}}(\Div_{\Gamma_{r}},
\Gamma_{r})$, the operator $M_{\kappa}^r$ is defined by
\begin{equation*}
\begin{array}{ll}
M^{r}_{\kappa}\jj_{r}(x_{r})=&\displaystyle{\int_{\Gamma_{r}}\big((\nabla^{x_{r}}G_{a}(\kappa,|x_{r}-y_{r}|))\cdot\nn_{r}(x_{r})\big)\jj_{r}(y_{r})ds(y_{r})}
\vspace{2mm}\\
&-\displaystyle{\int_{\Gamma_{r}}\nabla^{x_{r}}G_{a}(\kappa,|x_{r}-y_{r}|)\big(\nn_{r}(x_{r})\cdot\jj_{r}(y_{r})\big) ds(y_{r})}.
\end{array}
\end{equation*}
Using the equalities \eqref{eqd3} and the identity $\Rot\Rot=-\Delta+\nabla\Div$, we have
$$
\begin{array}{rl} 
\Div_{\Gamma_{r}}M^r_{\kappa}\jj_{r}(x_{r})=&\nn_{r}(x_{r})\cdot\!\displaystyle{\int_{\Gamma_{r}}
\!\!
\Rot\Rot^{x_{r}} \left\{G_{a}(\kappa,|x_{r}-y_{r}|)\jj_{r}(y_{r}) \right\} ds(y_{r})}
\\ 
=&\kappa^2\nn_{r}(x_{r})\cdot\displaystyle{\int_{\Gamma_{r}}\left\{G_{a}(\kappa,|x_{r}-y_{r}|)\jj_{r}(y_{r}) \right\}ds(y_{r})} 
\vspace{2mm}\\
 & +\displaystyle{\int_{\Gamma_{r}}\dfrac{\partial}{\partial\nn_{r}(x_{r})} \left\{G_{a}(\kappa,|x_{r}-y_{r}|)\Div_{\Gamma_{r}}\jj_{r}(y_{r}) \right\}ds(y_{r})}.
\end{array}
$$
 Proceeding in the same way as with the operator $\Pp_{r}C^{r}_{\kappa}\Pp_{r}^{-1}$, we obtain that the operator $\Pp_{r}M^{r}_{\kappa}\Pp_{r}^{-1}$ is defined for  $\jj=\nabla_{\Gamma}\;p+\mathbf{\Rot}_{\Gamma}\;q\in \TT\HH^{-\frac{1}{2}}(\Div_{\Gamma},\Gamma)$ by: 
 $$
 \Pp_{r}M^{r}_{\kappa}\Pp_{r}^{-1}\jj=\nabla_{\Gamma}P'(r)+\Rot_{\Gamma}Q'(r),
 $$ 
 with
\begin{multline*}
P'(r)(x)=\\
\left(\tau_{r}\Delta_{\Gamma_{r}}^{-1}\tau_{r}^{-1}\right)\tau_{r}\left\{\kappa^2 \int_{\Gamma_{r}}\nn_{r}\cdot \left\{G_{a}(\kappa,|\cdot-y_{r}|)\Rot_{\Gamma_{r}}\tau_{r}^{-1}q(y_{r})\right\}ds(y_{r}) \right.
\\
+\,\kappa^{2}\int_{\Gamma_{r}}\nn_{r}\cdot \left\{G_{a}(\kappa,|\cdot-y_{r}|)\nabla_{\Gamma_{r}}\tau_{r}^{-1}p(y_{r})\right\}ds(y_{r})
\\
+\left.\int_{\Gamma_{r}}\dfrac{\partial}{\partial\nn_{r}}G_{a}(\kappa,|\cdot-y_{r}|)(\Delta_{\Gamma_{r}}\tau_{r}^{-1}p)(y_{r})ds(y_{r})\right\}(x),
\end{multline*}
and
\begin{multline*}
Q_{r}'(x)=\\
\left(\tau_{r}\Delta_{\Gamma_{r}}^{-1}\rot_{\Gamma_{r}}\tau_{r}^{-1}\right)\tau_{r}\left\{\int_{\Gamma_{r}}\left(\nabla G_{a}(\kappa,|\cdot-y_{r}|)\cdot\nn_{r}\right)(\Rot_{\Gamma_{r}}\tau_{r}^{-1}q)(y_{r})ds(y_{r}) \right.
\\
+\int_{\Gamma_{r}}(\left( \nabla G_{a}(\kappa,|\cdot-y_{r}|)\cdot\nn_{r}\right)(\nabla_{\Gamma_{r}}\tau_{r}^{-1} p)(y_{r})ds(y_{r})
\\
- \int_{\Gamma_{r}}\nabla G_{a}(\kappa,|\cdot-y_{r}|)\left(\nn_{r}\cdot(\Rot_{\Gamma_{r}}\tau_{r}^{-1} q)(y_{r})\right) ds(y_{r})
\\
-\left.\int_{\Gamma_{r}}\nabla G_{a}(\kappa,|\cdot-y_{r}|)\left(\nn_{r}\cdot(\nabla_{\Gamma_{r}}\tau_{r}^{-1}p)(y_{r})\right) ds(y_{r})\right\}(x).
\end{multline*}
 These operators are composed of boundary integral operators with weakly singular kernel and of the surface differential operators defined in section \ref{BoundIntOp}. Each of these weakly singular boundary integral operators  has a pseudo-homogeneous kernel of class -1. The $\mathscr{C}^{\infty}$-G\^ateaux differentiability properties of such boundary integral operators has been established in the preceding paper \cite{CostabelLeLouer}. It remains now to show that the surface differential operators,  more precisely $\tau_{r}\nabla_{\Gamma_{r}}\tau_{r}^{-1},\;\tau_{r}\Rot_{\Gamma_{r}}\tau_{r}^{-1},\;\tau_{r}\Div_{\Gamma_{r}}\tau_{r}^{-1},\;\tau_{r}\rot_{\Gamma_{r}}\tau_{r}^{-1}$, as well as $\tau_{r}\Delta_{\Gamma_{r}}\tau_{r}^{-1}$ and its inverse, preserve their mapping properties  by differentiation with respect to $r$.

\section{G\^ateaux differentiability of surface differential operators}\label{SurfDiffOp}
The analysis of the surface differential operators requires the differentiability properties of some auxiliary functions, such as the outer unit normal vector $\nn_{r}$ and the Jacobian $J_{r}$ of the change of variable $x\mapsto x+r(x)$.  We recall some results established in the first part \cite[section 4]{CostabelLeLouer}. For the definition of G\^ateaux derivatives and the corresponding analysis, see \cite{Schwartz}.

\begin{lemma} \label{N} 
The mapping $\mathcal{N}:B^{\infty}(0,\varepsilon)\ni r\mapsto\tau_{r}\nn_{r}=\nn_{r}\circ(\Id+r)\in\mathscr{C}^{\infty}(\Gamma,\R^3)$ is $\mathscr{C}^{\infty}$-G\^ateaux-differentiable and its first derivative in the direction of $\xi\in\mathscr{C}^{\infty}(\Gamma,\R^3)$ is given by  
$$
d \mathcal{N}[r,\xi]=-\left[\tau_{r}\nabla_{\Gamma_{r}}(\tau_{r}^{-1}\xi)\right]\mathcal{N}(r).
$$ 
\end{lemma}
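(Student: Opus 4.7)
The plan is to characterize $\mathcal{N}(r)=\tau_{r}\nn_{r}$ implicitly by a condition on the tangential Jacobian $[\D_{\Gamma}r]$, obtain a closed-form expression that makes $\mathscr{C}^{\infty}$-G\^ateaux differentiability a matter of composition of smooth pointwise operations, and then extract the explicit formula for the derivative by linearizing the implicit characterization rather than by differentiating the closed form (which would require handling the pointwise square-root denominator).

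The starting point is the observation that for each $x\in\Gamma$ and each tangent vector $\ttt\in T_{x}\Gamma$, the push-forward $(\Id+[\D_{\Gamma}r](x))\ttt$ lies in $T_{x+r(x)}\Gamma_{r}$. Orthogonality of $\mathcal{N}(r)(x)$ to every such vector and the unit-length condition translate into
\begin{equation*}
(\Id+[\D_{\Gamma}r](x))^{\mathsf{T}}\mathcal{N}(r)(x)=\mu(r)(x)\,\nn(x),\qquad |\mathcal{N}(r)(x)|=1,
\end{equation*}
the sign of $\mu(r)$ being fixed by the orientation condition $\mathcal{N}(0)=\nn$. Since $[\D_{\Gamma}r]\nn=0$, the matrix $\Id+[\D_{\Gamma}r]$ is the identity on the normal line and a small perturbation of the identity on the tangent plane for $r\in B^{\infty}(0,\varepsilon)$, hence pointwise invertible, and one obtains the closed form
\begin{equation*}
\mathcal{N}(r)=\frac{((\Id+[\D_{\Gamma}r])^{\mathsf{T}})^{-1}\nn}{|((\Id+[\D_{\Gamma}r])^{\mathsf{T}})^{-1}\nn|}.
\end{equation*}
The map $r\mapsto[\D_{\Gamma}r]$ is continuous linear from $\mathscr{C}^{\infty}(\Gamma,\R^{3})$ to $\mathscr{C}^{\infty}(\Gamma,\R^{3\times 3})$, and the pointwise operations $A\mapsto (A^{\mathsf{T}})^{-1}$ (near $\Id$) and $v\mapsto v/|v|$ (near $\nn$) are smooth. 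By the Nemytskii-type composition rule for $\mathscr{C}^{\infty}$-G\^ateaux-differentiable mappings in Fr\'echet spaces of smooth functions that is set up in Section~4 of \cite{CostabelLeLouer}, $\mathcal{N}$ is $\mathscr{C}^{\infty}$-G\^ateaux-differentiable as a mapping into $\mathscr{C}^{\infty}(\Gamma,\R^{3})$.

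For the first derivative in a direction $\xi\in\mathscr{C}^{\infty}(\Gamma,\R^{3})$, I would set $r_{s}=r+s\xi$ and differentiate both defining conditions at $s=0$:
\begin{equation*}
[\D_{\Gamma}\xi]^{\mathsf{T}}\mathcal{N}(r)+(\Id+[\D_{\Gamma}r])^{\mathsf{T}}d\mathcal{N}[r,\xi]=d\mu[r,\xi]\,\nn,\qquad \mathcal{N}(r)\cdot d\mathcal{N}[r,\xi]=0.
\end{equation*}
To bring $\nabla_{\Gamma_{r}}$ into play, I would apply the surface chain rule to $\xi=(\tau_{r}^{-1}\xi)\circ(\Id+r)$ on tangent directions; using that both $[\D_{\Gamma}\xi]$ and $[\D_{\Gamma}r]$ annihilate $\nn$, one obtains the global matrix identity
\begin{equation*}
[\D_{\Gamma}\xi]=[\D_{\Gamma_{r}}(\tau_{r}^{-1}\xi)]\,(P_{t}+[\D_{\Gamma}r]),\qquad P_{t}=\Id-\nn\nn^{\mathsf{T}}.
\end{equation*}
Transposing, substituting into the linearized equation, inverting $(\Id+[\D_{\Gamma}r])^{\mathsf{T}}$, and using the consequence $((\Id+[\D_{\Gamma}r])^{\mathsf{T}})^{-1}\nn=\mu(r)^{-1}\mathcal{N}(r)$, one reduces the equation to $d\mathcal{N}[r,\xi]=-[\tau_{r}\nabla_{\Gamma_{r}}(\tau_{r}^{-1}\xi)]\mathcal{N}(r)$ modulo a scalar multiple of $\mathcal{N}(r)$. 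The normalization constraint $\mathcal{N}(r)\cdot d\mathcal{N}[r,\xi]=0$, combined with the identity $\nn_{r}\cdot[\nabla_{\Gamma_{r}}(\tau_{r}^{-1}\xi)]\nn_{r}=0$ (which follows from the tangentiality of surface gradients, applied column by column), forces this multiple to vanish, yielding the stated formula.

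The main obstacle lies in this final algebraic reorganization: the linearization naturally produces $\nabla_{\Gamma}\xi$ on the reference surface, whereas the target formula involves the pulled-back surface gradient on $\Gamma_{r}$. Bridging the two requires the surface chain-rule identity above, systematic bookkeeping of the tangential projector $P_{t}$, and a normal-component cancellation that is not visible term by term but only emerges after the scalar $d\mu[r,\xi]$ is eliminated via the normalization constraint.
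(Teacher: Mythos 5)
Your argument is correct. Note that the paper itself offers no proof of this lemma: it is recalled from Part~I \cite[Section 4]{CostabelLeLouer}, where the differentiability of $\mathcal{N}$ is obtained from an explicit closed-form expression for the transported normal (the inverse transpose, equivalently the cofactor matrix, of the deformation Jacobian applied to $\nn$ and renormalized) which is then differentiated directly. Your route reaches the same closed form, so the differentiability part is essentially the paper's argument; where you genuinely differ is in extracting the formula for $d\mathcal{N}[r,\xi]$ from the implicit characterization $(\Id+[\D_{\Gamma}r])^{\mathsf T}\mathcal{N}(r)=\mu(r)\,\nn$, $|\mathcal{N}(r)|=1$, rather than from the quotient with the square-root denominator. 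This buys you two things: the unknown scalar $d\mu[r,\xi]$ absorbs all normal components so you never have to differentiate the normalization factor, and the appearance of the pulled-back surface gradient $\tau_{r}\nabla_{\Gamma_{r}}\tau_{r}^{-1}$ (rather than $\nabla_{\Gamma}\xi$) is forced cleanly by the chain-rule identity $[\D_{\Gamma}\xi]=\bigl[\tau_{r}\D_{\Gamma_{r}}(\tau_{r}^{-1}\xi)\bigr](P_{t}+[\D_{\Gamma}r])$, which I checked holds on all of $\R^{3}$ since both sides annihilate $\nn$. The final cancellation via $\mathcal{N}(r)\cdot d\mathcal{N}[r,\xi]=0$ together with the tangentiality of $[\nabla_{\Gamma_{r}}(\tau_{r}^{-1}\xi)]\nn_{r}$ is also sound. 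Only cosmetic points remain: you should record that $\mu(r)=\mathcal{N}(r)\cdot\nn$ stays bounded away from zero on $B^{\infty}(0,\varepsilon)$ before dividing by it, and the matrices $[\D_{\Gamma_{r}}(\tau_{r}^{-1}\xi)]$ in your displayed identities should carry the pullback $\tau_{r}$ (evaluation at $x+r(x)$), as they do in the final formula.
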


 \begin{lemma}\label{J} 
The mapping $\mathcal{J}$ from $r\in B^{\infty}(0,\varepsilon)$ to the surface Jacobian $J_{r}\in\mathscr{C}^{\infty}(\Gamma,\R)$  is $\mathscr{C}^{\infty}$-G\^ateaux differentiable and its first derivative  in the direction of $\xi\in\mathscr{C}^{\infty}(\Gamma,\R^3)$ is given by 
 $$
 d\mathcal{J}[r_{0},\xi]=J_{r_{0}}\cdot\big(\tau_{r_{0}}\Div_{\Gamma_{r_{0}}}(\tau^{-1}_{r_{0}}\xi)\big).
 $$ 
\end{lemma}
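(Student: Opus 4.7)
My strategy has three parts: an explicit algebraic formula for $J_{r}$ that makes smoothness transparent, a composition argument reducing the general case to $r_{0}=0$, and a direct computation of the base case.

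First I extend $r$ smoothly to $\tilde{r}$ in a tubular neighbourhood of $\Gamma$, set $\tilde{\Phi}_{r}=\Id+\tilde{r}$, and use Nanson's formula
\begin{equation*}
J_{r}(x)\,\mathcal{N}(r)(x)=\com\bigl(\nabla\tilde{\Phi}_{r}(x)\bigr)\,\nn(x),\qquad x\in\Gamma,
\end{equation*}
to obtain $J_{r}=\bigl|\com(\nabla\tilde{\Phi}_{r})\,\nn\bigr|$, a polynomial function of the entries of $\nabla\tilde{r}$. Since the smooth extension operator $r\mapsto\tilde{r}$ is continuous and linear between the relevant Fr\'echet spaces, $\mathcal{J}$ is automatically $\mathscr{C}^{\infty}$-G\^ateaux differentiable by the chain rule in Fr\'echet spaces; only its first derivative needs to be identified.

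Next I factor $\tilde{\Phi}_{r_{0}+t\xi}=\psi_{t}\circ\tilde{\Phi}_{r_{0}}$ with $\psi_{t}=\Id+t\zeta$ and $\zeta:=\tilde{\xi}\circ\tilde{\Phi}_{r_{0}}^{-1}$. The identity $\com(AB)=\com(A)\com(B)$ combined with Nanson's formula gives the multiplicative chain rule
\begin{equation*}
J_{r_{0}+t\xi}(x)=J_{r_{0}}(x)\cdot\bigl(J^{\psi_{t}}\circ\Phi_{r_{0}}\bigr)(x),
\end{equation*}
where $J^{\psi_{t}}$ is the surface Jacobian of the deformation $\psi_{t}$ of $\Gamma_{r_{0}}$. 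Differentiating at $t=0$ reduces the problem to the base case formula on $\Gamma_{r_{0}}$ applied to $\zeta\bigl|_{\Gamma_{r_{0}}}=\tau_{r_{0}}^{-1}\xi$, and the identity $(\Div_{\Gamma_{r_{0}}}\zeta)\circ\Phi_{r_{0}}=\tau_{r_{0}}\Div_{\Gamma_{r_{0}}}(\tau_{r_{0}}^{-1}\xi)$ yields the claimed form of $d\mathcal{J}[r_{0},\xi]$.

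For the base case on $\Gamma$, I differentiate $J_{t\xi}=|\com(\Id+t\nabla\tilde{\xi})\nn|$ at $t=0$. Using $\com(A)=\det(A)A^{-T}$ together with $\partial_{t}\det(\Id+tA)|_{t=0}=\Tr A$ and $\partial_{t}(\Id+tA)^{-T}|_{t=0}=-A^{T}$, one finds
\begin{equation*}
\partial_{t}\bigl[\com(\Id+t\nabla\tilde{\xi})\nn\bigr]\Big|_{t=0}=(\Div\tilde{\xi})\,\nn-[\nabla\tilde{\xi}]^{T}\nn.
\end{equation*}
Since $|\nn|=1$, the derivative of the Euclidean norm picks out the scalar product with $\nn$, producing $\Div\tilde{\xi}-\nn\cdot[\nabla\tilde{\xi}]\nn=\Div_{\Gamma}\xi$ by definition \eqref{e:D}, which is the desired identity at $r_{0}=0$.

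\textbf{Main obstacle.} The delicate point is the composition chain rule used in the reduction. It must be handled with care because $\zeta$ is in general not tangent to $\Gamma_{r_{0}}$; this causes no trouble, however, since \eqref{e:D} makes $\Div_{\Gamma_{r_{0}}}$ meaningful on arbitrary smooth vector fields, and $\Div_{\Gamma_{r_{0}}}\zeta$ depends only on $\zeta\bigl|_{\Gamma_{r_{0}}}=\tau_{r_{0}}^{-1}\xi$.
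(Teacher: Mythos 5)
Your proof is correct. Note that the paper under review does not actually prove Lemma \ref{J}: it only recalls it from Section 4 of Part I \cite{CostabelLeLouer}, where the same two ingredients you use appear, namely the cofactor (Nanson) representation $J_{r}\,\mathcal{N}(r)=\com\bigl(\Id+\nabla\tilde r\bigr)\nn$ and the computation of the trace term at $r=0$; your additional reduction of the general base point $r_{0}$ to $r_{0}=0$ via the factorization $\tilde\Phi_{r_{0}+t\xi}=\psi_{t}\circ\tilde\Phi_{r_{0}}$ and the multiplicativity $\com(AB)=\com(A)\com(B)$ is a clean way to obtain the stated formula on $\Gamma_{r_{0}}$, and your observation that $\Div_{\Gamma_{r_{0}}}\zeta$ is well defined for the non-tangential field $\zeta$ by \eqref{e:D} and depends only on $\zeta|_{\Gamma_{r_{0}}}$ addresses the one genuinely delicate point. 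Two small items deserve a word if you write this up in full: first, that $\com(\nabla\tilde\Phi_{r})\nn$ (equivalently $(Ft_{1})\wedge(Ft_{2})$ for a tangent frame $t_{1},t_{2}$ with $t_{1}\wedge t_{2}=\nn$) is independent of the chosen extension $\tilde r$, so that $J_{r}$ is intrinsically defined; and second, that the $\mathscr{C}^{\infty}$-G\^ateaux differentiability in the Fr\'echet space $\mathscr{C}^{\infty}(\Gamma,\R)$ requires the difference quotients to converge in all $\mathscr{C}^{k}$ seminorms, which your explicit formula does deliver (continuous linear extension, followed by a polynomial map and the smooth map $v\mapsto|v|$ away from $0$), but which should be stated rather than left to ``automatically''.
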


The differentiability properties of the tangential gradient and of the surface divergence in the framework of classical Sobolev spaces is established in \cite[section 5]{CostabelLeLouer}. 

\begin{lemma}\label{nabla}
The mapping 
$$
\begin{array}{cccc}
\mathcal{G}:&B^{\infty}(0,\varepsilon)&\rightarrow&\mathscr{L}(H^{s+1}(\Gamma),\HH^{s}(\Gamma))
\\
&r&\mapsto&\tau_{r}\nabla_{\Gamma_{r}}\tau_{r}^{-1}
\end{array}
$$ 
is $\mathscr{C}^{\infty}$-G\^ateaux differentiable and its first derivative for $\xi\in\mathscr{C}^{\infty}(\Gamma,\R^3)$ is given by
$$
d \mathcal{G}[r,\xi]u=-[\mathcal{G}(r)\xi]\mathcal{G}(r)u+\big(\mathcal{G}(r)u\cdot[\mathcal{G}(r)\xi]\mathcal{N}(r)\big)\,\mathcal{N}(r).
$$
\end{lemma}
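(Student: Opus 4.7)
The plan is to derive an explicit chain-rule formula for $\mathcal{G}(r)u$, establish the smoothness of its building blocks, and then identify the derivative with the stated surface-intrinsic expression. Let $\tilde u$ be a smooth extension of $u\in H^{s+1}(\Gamma)$ to a neighborhood of $\Gamma$ (depending continuously on $u$) and $\tilde r$ a smooth extension of $r$; set $\Phi_{r}=\Id+\tilde r$. Since $\tilde u\circ\Phi_{r}^{-1}$ extends $u\circ(\Id+r)^{-1}$ to a neighborhood of $\Gamma_{r}$, the definition of $\nabla_{\Gamma_{r}}$ together with the chain rule gives, for $x\in\Gamma$,
$$
\mathcal{G}(r)u(x)=M(r)(x)\nabla\tilde u(x)-\bigl(M(r)(x)\nabla\tilde u(x)\cdot\mathcal{N}(r)(x)\bigr)\mathcal{N}(r)(x),
$$
where $M(r)(x)=\bigl[\Id+[D\tilde r](x)\bigr]^{-T}$. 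A short verification, using that $M(r)\nn$ is parallel to $\mathcal{N}(r)$ (the transposed inverse Jacobian sends normals of $\Gamma$ to normals of $\Gamma_{r}$), shows that the right-hand side is independent of the choice of extensions of $u$ and $r$.

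For the $\mathscr{C}^\infty$-G\^ateaux differentiability, the map $r\mapsto \Id+[D\tilde r]$ is continuous and affine from $\mathscr{C}^\infty(\Gamma,\R^3)$ into $\mathscr{C}^\infty(\Gamma,\R^{3\times3})$, and matrix inversion and transposition are smooth on the open set of invertible matrices. Hence $r\mapsto M(r)$ is $\mathscr{C}^\infty$-G\^ateaux differentiable with $dM[r,\xi]=-M(r)[D\tilde\xi]^{T}M(r)$. Combining this with Lemma~\ref{N} and the continuity of the bilinear operation $(A,u)\mapsto A\nabla\tilde u-(A\nabla\tilde u\cdot\mathcal{N}(r))\mathcal{N}(r)$ from $\mathscr{C}^\infty(\Gamma,\R^{3\times3})\times H^{s+1}(\Gamma)$ into $\HH^s(\Gamma)$, the chain rule for $\mathscr{C}^\infty$-G\^ateaux maps yields the $\mathscr{C}^\infty$-G\^ateaux differentiability of $\mathcal{G}$ into $\mathscr{L}(H^{s+1}(\Gamma),\HH^s(\Gamma))$.

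Finally, the derivative formula is obtained by differentiating the displayed expression: the four resulting contributions involve $dM$ and $d\mathcal{N}$, and the key step is to recognize that under the tangential projection onto $\mathcal{N}(r)^\perp$, the ambient expression $M(r)[D\tilde\xi]^{T}M(r)\nabla\tilde u$ collapses onto the intrinsic object $[\mathcal{G}(r)\xi]\mathcal{G}(r)u$. Combined with $d\mathcal{N}[r,\xi]=-[\mathcal{G}(r)\xi]\mathcal{N}(r)$ from Lemma~\ref{N} and the orthogonality $\mathcal{G}(r)u\cdot\mathcal{N}(r)=0$, this produces the announced formula. The main obstacle is precisely this bookkeeping: one must verify that all extension-dependent normal-derivative terms cancel, leaving only the two intrinsic contributions. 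The mechanism is already visible at $r=0$, where one checks directly that
$$
-[\nabla\tilde\xi]\nabla\tilde u+\bigl([\nabla\tilde\xi]\nabla\tilde u\cdot\nn\bigr)\nn+\bigl(\nabla\tilde u\cdot[\nabla_{\Gamma}\xi]\nn\bigr)\nn+(\nabla\tilde u\cdot\nn)[\nabla_{\Gamma}\xi]\nn=-[\nabla_{\Gamma}\xi]\nabla_{\Gamma}u+\bigl(\nabla_{\Gamma}u\cdot[\nabla_{\Gamma}\xi]\nn\bigr)\nn,
$$
using the decomposition $[\nabla\tilde\xi]=[\nabla_{\Gamma}\xi]+\nn(\partial_n\tilde\xi)^{T}$; the same algebra, transported by $\Phi_{r}$, handles general $r$. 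Higher-order G\^ateaux derivatives follow by iterating the chain and product rules on the smooth constituents $M$, $\mathcal{N}$, and the multiplication maps.
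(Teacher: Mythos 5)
Your proof is correct and uses exactly the ingredients of the argument that the paper delegates to Part I (Section 5 of \cite{CostabelLeLouer}): the pullback identity $\mathcal{G}(r)u=M(r)\nabla\tilde u-\big(M(r)\nabla\tilde u\cdot\mathcal{N}(r)\big)\mathcal{N}(r)$ with $M(r)$ the inverse transposed Jacobian of $\Id+\tilde r$, the smoothness of $r\mapsto M(r)$, Lemma~\ref{N} for $d\mathcal{N}$, and the cancellation of the extension-dependent normal-derivative terms, and your verification at $r=0$ checks out. The only step worth making explicit is the passage from $r=0$ to general $r$, which follows from the factorization $\Id+r+t\xi=(\Id+t\,\tau_{r}^{-1}\xi)\circ(\Id+r)$, i.e.\ $\mathcal{G}(r+t\xi)=\tau_{r}\,\mathcal{G}^{\Gamma_{r}}(t\,\tau_{r}^{-1}\xi)\,\tau_{r}^{-1}$, so that the base-point computation applied on $\Gamma_{r}$ in the direction $\tau_{r}^{-1}\xi$ yields the stated formula; this is what your phrase ``transported by $\Phi_{r}$'' amounts to.
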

 
\begin{lemma}\label{D}
The mapping 
$$
\begin{array}{cccc}
\mathcal{D}:&B^{\infty}(0,\varepsilon)&\rightarrow&\mathscr{L}(\HH^{s+1}(\Gamma),H^s(\Gamma))
\\
&r&\mapsto&\tau_{r}\Div_{\Gamma_{r}}\tau_{r}^{-1}
\end{array}
$$ 
is $\mathscr{C}^{\infty}$-G\^ateaux differentiable  and its first derivative for $\xi\in\mathscr{C}^{\infty}(\Gamma,\R^3)$ is given by
$$
d\mathcal{D}[r,\xi]\uu=-\Tr([\mathcal{G}(r)\xi][\mathcal{G}(r)\uu])+\left([\mathcal{G}(r)\uu]\mathcal{N}(r)\cdot[\mathcal{G}(r)\xi]\mathcal{N}(r)\right).
$$
\end{lemma}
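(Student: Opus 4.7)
The strategy is to reduce the statement to Lemma \ref{nabla} by writing the surface divergence as the trace of the transposed Jacobian matrix, so that the differentiation is reduced to a componentwise application of the already-established differentiability of the tangential gradient.

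First I would exploit the identity $\Div_{\Gamma_r}\vv_r=\Tr[\D_{\Gamma_r}\vv_r]$, which follows at once from the definitions \eqref{G}--\eqref{e:D} once one observes that both sides equal $\Div\tilde{\vv}_r|_{\Gamma_r}-\nn_r\cdot\bigl([\D\tilde{\vv}_r]\nn_r\bigr)$ for any smooth extension $\tilde{\vv}_r$. Applied to $\vv_r=\tau_r^{-1}\uu$, this gives
\begin{equation*}
  \mathcal{D}(r)\uu=\tau_r\Tr\bigl[\D_{\Gamma_r}\tau_r^{-1}\uu\bigr]=\Tr\bigl(\tau_r[\D_{\Gamma_r}\tau_r^{-1}\uu]\bigr).
\end{equation*}
Since, by the notational convention introduced in Section \ref{BoundIntOp}, the $i$-th column of $[\nabla_{\Gamma_r}\tau_r^{-1}\uu]$ is $\nabla_{\Gamma_r}\tau_r^{-1}u_i$, the componentwise application of $\tau_r$ yields the matrix whose $i$-th column is $\mathcal{G}(r)u_i$; extending the bracket notation to the vector-valued mapping $\mathcal{G}(r)$, this is precisely $[\mathcal{G}(r)\uu]$. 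Transposing does not affect the trace, and therefore
\begin{equation*}
  \mathcal{D}(r)\uu=\Tr[\mathcal{G}(r)\uu].
\end{equation*}

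Next I would differentiate this identity column by column. By Lemma \ref{nabla}, each scalar component $\mathcal{G}(r)u_i$ is $\mathscr{C}^{\infty}$-G\^ateaux differentiable in $r$, with first derivative
\begin{equation*}
  d\mathcal{G}[r,\xi]u_i=-[\mathcal{G}(r)\xi]\mathcal{G}(r)u_i+\bigl(\mathcal{G}(r)u_i\cdot[\mathcal{G}(r)\xi]\mathcal{N}(r)\bigr)\mathcal{N}(r).
\end{equation*}
Since the trace is linear and continuous on matrices, the derivative of $\mathcal{D}(r)\uu$ is obtained by summing the $i$-th diagonal entries. A direct reorganization of the indices — using that $(\mathcal{G}(r)u_i)_j=[\mathcal{G}(r)\uu]_{ji}$ — turns the first contribution into $-\Tr\bigl([\mathcal{G}(r)\xi][\mathcal{G}(r)\uu]\bigr)$ and the second into $[\mathcal{G}(r)\xi]\mathcal{N}(r)\cdot[\mathcal{G}(r)\uu]\mathcal{N}(r)$, which is exactly the announced formula.

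For the regularity and mapping properties, note that since $\xi\in\mathscr{C}^{\infty}(\Gamma,\R^3)$, Lemma \ref{nabla} gives $\mathcal{G}(r)\xi\in\mathscr{C}^{\infty}$, and Lemma \ref{N} gives $\mathcal{N}(r)\in\mathscr{C}^{\infty}(\Gamma,\R^3)$. Multiplication by smooth functions is continuous on $H^s(\Gamma)$, so the formula defines a bounded map $\HH^{s+1}(\Gamma)\to H^s(\Gamma)$, which is also seen to depend continuously on $r$ by the continuity part of Lemmas \ref{N} and \ref{nabla}. The higher G\^ateaux derivatives are then treated by induction: each iteration produces a finite sum of terms that are polynomial expressions in $\mathcal{G}(r)\bullet$ and $\mathcal{N}(r)$ applied to the test directions $\xi_1,\dots,\xi_k$ and to $\uu$, and the composition/product rules for G\^ateaux derivatives in Fr\'echet spaces (as recalled in \cite{CostabelLeLouer}) close the argument.

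The only real obstacle is essentially bookkeeping: one must justify that the componentwise differentiation of a matrix-valued quantity commutes with the trace and with the pullback $\tau_r$, and verify at each step that multiplication by the smooth factors $[\mathcal{G}(r)\xi]$ and $\mathcal{N}(r)$ does not degrade the $H^s$-regularity of the terms involving $\uu$. Once the identity $\mathcal{D}(r)\uu=\Tr[\mathcal{G}(r)\uu]$ is in hand, the argument is essentially a linear-algebraic reorganization of Lemma \ref{nabla}, with no further analytic input required.
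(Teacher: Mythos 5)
Your argument is correct. Note first that the paper does not actually prove Lemma \ref{D} in this text: both Lemma \ref{nabla} and Lemma \ref{D} are quoted from Section 5 of Part I \cite{CostabelLeLouer}, so there is no in-paper proof to compare against line by line. What you do instead is derive Lemma \ref{D} from Lemma \ref{nabla} via the identity $\Div_{\Gamma}\uu=\Tr[\D_{\Gamma}\uu]=\Tr[\nabla_{\Gamma}\uu]$, which is a legitimate and self-contained alternative; I checked the identity against the definitions \eqref{G} and \eqref{e:D} (both sides reduce to $\Div\tilde\uu_{|\Gamma}-[\nabla\tilde\uu_{|\Gamma}]\nn\cdot\nn$), and the index reorganization of the two terms of $d\mathcal{G}[r,\xi]u_i$ into $-\Tr\bigl([\mathcal{G}(r)\xi][\mathcal{G}(r)\uu]\bigr)$ and $[\mathcal{G}(r)\uu]\mathcal{N}(r)\cdot[\mathcal{G}(r)\xi]\mathcal{N}(r)$ is exact. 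Your route is in fact precisely the technique the paper itself uses one step later to deduce Lemma \ref{rot1} from Lemma \ref{rot2} by writing $\mathcal{R}(r)\uu=-\Tr(\boldsymbol{\mathcal{R}}(r)\uu)$, so it is fully consistent with the paper's methodology; what it buys is that the divergence lemma no longer needs an independent computation in Part I but becomes a corollary of the gradient lemma, at the cost of having to extend the bracket notation $[\mathcal{G}(r)\uu]$ to matrices and to justify (as you do) that the componentwise differentiation commutes with the trace. The only minor points worth tightening are cosmetic: $\mathcal{G}(r)u_i$ is a vector field rather than a ``scalar component'', and for the operator-valued differentiability one should say explicitly that $\mathcal{D}(r)$ is a finite sum of compositions of $\mathcal{G}(r)$ with fixed bounded linear maps (extraction of the $i$-th component before and after), so that G\^ateaux differentiability in $\mathscr{L}(\HH^{s+1}(\Gamma),H^s(\Gamma))$ transfers from Lemma \ref{nabla} without further estimates.
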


Similar results can now be obtained for the tangential vector curl by composition of the tangential gradient with the normal vector.
\begin{lemma}\label{rot2}
The mapping 
$$
\begin{array}{cccc}
\boldsymbol{\mathcal{R}}:& B^{\infty}(0,\varepsilon)&\rightarrow&\mathscr{L}(H^{s+1}(\Gamma),\HH^s(\Gamma))
\\
&r&\mapsto&\tau_{r}\Rot_{\Gamma_{r}}\tau_{r}^{-1}
\end{array}
$$ 
is $\mathscr{C}^{\infty}$-G\^ateaux differentiable and its first derivative for $\xi\in\mathscr{C}^{\infty}(\Gamma,\R^3)$ is given by
$$
d \boldsymbol{\mathcal{R}}[r,\xi]u=\transposee{[\mathcal{G}(r)\xi]}\boldsymbol{\mathcal{R}}(r)u-\mathcal{D}(r)\xi\cdot\boldsymbol{\mathcal{R}}(r)u.
$$
\end{lemma}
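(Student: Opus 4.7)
My plan is to reduce the tangential vector curl to the tangential gradient composed with the normal vector, and then apply the Leibniz rule using the differentiability results already established in Lemmas \ref{N} and \ref{nabla}. Specifically, combining the definitions \eqref{G} and \eqref{RR} gives the pointwise identity $\Rot_\Gamma u = \nabla_\Gamma u \wedge \nn$, since the normal component of $\nabla\tilde u_{|\Gamma}$ is killed by the cross product with $\nn$. Pulling this back via $\tau_r$, we obtain
\begin{equation*}
\boldsymbol{\mathcal{R}}(r)u = \mathcal{G}(r)u \wedge \mathcal{N}(r),
\end{equation*}
which already proves $\mathscr{C}^\infty$-G\^ateaux differentiability, since it expresses $\boldsymbol{\mathcal{R}}$ as a bilinear composition of two $\mathscr{C}^\infty$-G\^ateaux differentiable maps with values in $\HH^s(\Gamma)$ and $\mathscr{C}^\infty(\Gamma,\R^3)$ respectively.

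Differentiating by the product rule and substituting the formulas from Lemmas \ref{N} and \ref{nabla} yields
\begin{equation*}
d\boldsymbol{\mathcal{R}}[r,\xi]u = -[\mathcal{G}(r)\xi]\mathcal{G}(r)u \wedge \mathcal{N}(r) + \bigl(\mathcal{G}(r)u \cdot [\mathcal{G}(r)\xi]\mathcal{N}(r)\bigr)\,\mathcal{N}(r)\wedge\mathcal{N}(r) - \mathcal{G}(r)u \wedge [\mathcal{G}(r)\xi]\mathcal{N}(r).
\end{equation*}
The middle term vanishes since $\mathcal{N}(r)\wedge\mathcal{N}(r)=0$, leaving only two terms to rearrange.

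The key step is then the algebraic identity for a $3\times 3$ matrix $B$ and vectors $a,b\in\R^3$,
\begin{equation*}
(Ba)\wedge b + a\wedge(Bb) = \Tr(B)\,(a\wedge b) - \transposee{B}(a\wedge b),
\end{equation*}
which follows by differentiating the classical formula $(Ma)\wedge(Mb) = (\det M)M^{-T}(a\wedge b)$ at $M=\Id$, noting that the cofactor expansion gives $\operatorname{cof}(\Id + tB) = \Id + t(\Tr(B)\Id - \transposee{B}) + O(t^2)$. Applying this with $B=[\mathcal{G}(r)\xi]$, $a=\mathcal{G}(r)u$, $b=\mathcal{N}(r)$ transforms the two surviving terms into $\transposee{[\mathcal{G}(r)\xi]}\boldsymbol{\mathcal{R}}(r)u - \Tr([\mathcal{G}(r)\xi])\,\boldsymbol{\mathcal{R}}(r)u$.

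Finally I would observe that $\Tr([\mathcal{G}(r)\xi])=\mathcal{D}(r)\xi$, which is just the identity $\Tr[\nabla_\Gamma\vv] = \Div_\Gamma\vv$ evaluated on $\Gamma_r$ and pulled back: writing $[\nabla_\Gamma\vv]_{ji} = \partial_j\tilde v_i - (\nabla\tilde v_i\cdot\nn)n_j$ and summing the diagonal recovers precisely $\Div\tilde\vv - ([\nabla\tilde\vv]\nn\cdot\nn)$, which is the definition \eqref{e:D}. This produces the stated formula. The only delicate point is the algebraic identity for the cross product, but I expect no real obstacle: the mapping properties transfer without loss since $\mathcal{N}(r)$ is smooth, and higher derivatives are obtained by iterating the same Leibniz argument on the two building blocks $\mathcal{G}$ and $\mathcal{N}$.
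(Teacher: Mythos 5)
Your proposal is correct and follows essentially the same route as the paper: writing $\boldsymbol{\mathcal{R}}(r)u=\mathcal{G}(r)u\wedge\mathcal{N}(r)$, differentiating by the product rule with Lemmas \ref{N} and \ref{nabla}, and concluding with the identity $(Ab)\wedge c+b\wedge Ac=\Tr(A)(b\wedge c)-\transposee{A}(b\wedge c)$ applied to $A=-[\mathcal{G}(r)\xi]$, $b=\mathcal{G}(r)u$, $c=\mathcal{N}(r)$. Your additional justifications (the cofactor expansion for the cross-product identity and the trace computation $\Tr([\mathcal{G}(r)\xi])=\mathcal{D}(r)\xi$) are correct details the paper leaves implicit.
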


\begin{proof} 
Let  $u\in H^{s+1}(\Gamma)$. By definition, we have $\boldsymbol{\mathcal{R}}(r)u=\mathcal{G}(r)u\wedge \mathcal{N}(r)$. By lemmas \ref{N} and \ref{nabla} this application is $\mathscr{C}^{\infty}$-G\^ateaux differentiable. For the derivative in the direction $\xi\in\mathscr{C}^{\infty}(\Gamma,\R^3)$ we find
$$
d\boldsymbol{\mathcal{R}}[r,\xi]u=-{[\mathcal{G}(r)\xi]}\mathcal{G}(r)u\wedge \mathcal{N}(r)-\mathcal{G}(r)u\wedge[\mathcal{G}(r)\xi]\mathcal{N}(r).
$$
For any $(3\times3)$ matrix $A$ and vectors $b$ and $c$ there holds 
$$
(Ab)\wedge c+b\wedge Ac=\Tr(A)(b\wedge c)-\transposee{A}(b\wedge c).
$$ 

We obtain the expression of the first derivative with the choice 
$A=-[\mathcal{G}(r)\xi]$, $b=\mathcal{G}(r)u$ and $c=\mathcal{N}(r)$.
\end{proof}

\begin{lemma}\label{rot1}
The mapping 
$$
\begin{array}{cccc}
\mathcal{R}:&B^{\infty}(0,\varepsilon)&\rightarrow&\mathscr{L}(\HH^{s+1}(\Gamma),H^s(\Gamma))
\\
&r&\mapsto&\tau_{r}\rot_{\Gamma_{r}}\tau_{r}^{-1}
\end{array}
$$ 
is $\mathscr{C}^{\infty}$-G\^ateaux differentiable and its first derivative for $\xi\in\mathscr{C}^{\infty}(\Gamma,\R^3)$ is given by
$$
d\mathcal{R}[r,\xi]\uu=-\sum_{i=1}^3\left(\mathcal{G}(r)\xi_{i}\cdot\boldsymbol{\mathcal{R}}(r)u_{i}\right)- \mathcal{D}(r)\xi\cdot \mathcal{R}(r)\uu
$$ where $\uu=(u_{1},u_{2},u_{3})$ and $\xi=(\xi_{1},\xi_{2},\xi_{3})$.
\end{lemma}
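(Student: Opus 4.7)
The plan is to reduce the differentiation of $\mathcal{R}$ to the differentiability of $\boldsymbol{\mathcal{R}}$ that was just proved in Lemma~\ref{rot2}, by expressing the scalar surface curl componentwise. First I would verify the pointwise identity
\begin{equation*}
\rot_{\Gamma}\uu = -\sum_{i=1}^{3}\ee_{i}\cdot\Rot_{\Gamma}u_{i},\qquad\uu=(u_{1},u_{2},u_{3})\in\HH^{s+1}(\Gamma),
\end{equation*}
which follows from $\rot_{\Gamma}\uu=\nn\cdot\Rot\tilde{\uu}$, the decomposition $\Rot\tilde{\uu}=\sum_{i}\nabla\tilde{u}_{i}\wedge\ee_{i}$, and the triple-product relation $\nn\cdot(a\wedge\ee_{i})=-\ee_{i}\cdot(a\wedge\nn)$ together with the definition $\Rot_{\Gamma}u_{i}=\nabla\tilde{u}_{i}\wedge\nn$. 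Since the identity is purely geometric and the Cartesian basis vectors $\ee_{i}$ are invariant under the pullback $\tau_{r}$, it lifts to
\begin{equation*}
\mathcal{R}(r)\uu = -\sum_{i=1}^{3}\ee_{i}\cdot\boldsymbol{\mathcal{R}}(r)u_{i}.
\end{equation*}

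The $\mathscr{C}^{\infty}$-G\^ateaux differentiability of $\mathcal{R}$ is then inherited from that of $\boldsymbol{\mathcal{R}}$ componentwise, and I would compute the first derivative by inserting the formula of Lemma~\ref{rot2} into the right-hand side. This splits $d\mathcal{R}[r,\xi]\uu$ into two sums. In the sum containing the scalar factor $\mathcal{D}(r)\xi$, that factor pulls out and the remaining $\sum_{i}\ee_{i}\cdot\boldsymbol{\mathcal{R}}(r)u_{i}$ equals $-\mathcal{R}(r)\uu$, producing the contribution $-\mathcal{D}(r)\xi\cdot\mathcal{R}(r)\uu$. In the sum involving $\transposee{[\mathcal{G}(r)\xi]}\boldsymbol{\mathcal{R}}(r)u_{i}$, I would transpose using $\ee_{i}\cdot\transposee{A}b=b\cdot A\ee_{i}$, and then observe that by the column convention adopted in Section~\ref{BoundIntOp} the $i$-th column of the matrix $[\mathcal{G}(r)\xi]$ is exactly $\mathcal{G}(r)\xi_{i}$, so that $[\mathcal{G}(r)\xi]\ee_{i}=\mathcal{G}(r)\xi_{i}$. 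This turns the first sum into $\sum_{i}\mathcal{G}(r)\xi_{i}\cdot\boldsymbol{\mathcal{R}}(r)u_{i}$ and reproduces the announced expression.

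The main obstacle, such as it is, amounts to the small amount of bookkeeping in the two linear-algebraic manipulations above: being careful with signs in the cyclic triple-product expansion, and with the row-versus-column convention used to define $[\mathcal{G}(r)\xi]$. No new analytic difficulty arises, because $\mathscr{C}^{\infty}$-G\^ateaux differentiability is transferred directly from Lemma~\ref{rot2} together with the continuity of the bilinear evaluation $(v,w)\mapsto\ee_{i}\cdot w$ and scalar multiplication in the relevant Sobolev spaces $H^{s}(\Gamma)$ and $\TT\HH^{s}(\Gamma)$.
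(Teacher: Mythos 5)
Your proof is correct and follows essentially the same route as the paper: both reduce Lemma~\ref{rot1} to Lemma~\ref{rot2} via the identity $\rot_{\Gamma}\uu=-\sum_{i}\ee_{i}\cdot\Rot_{\Gamma}u_{i}$ (which the paper states in the equivalent form $\rot_{\Gamma}\uu=-\Tr\big([\Rot_{\Gamma}\uu]\big)$) and then insert the derivative formula from Lemma~\ref{rot2}, handling the $\transposee{[\mathcal{G}(r)\xi]}$ term by the same column-convention bookkeeping. Your componentwise sum is just the explicit form of the paper's trace computation, and your verification of the preliminary geometric identity is a detail the paper omits.
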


\begin{proof} 
Let $\uu\in\HH^{s+1}(\Gamma)$. 
Notice that we have $\rot_{\Gamma}\uu=-\Tr\big([\Rot_{\Gamma}\uu]\big)$. We can therefore write 
$$
\mathcal{R}(r)\uu=-\Tr(\boldsymbol{\mathcal{R}}(r)\uu).
$$  
The $\mathscr{C}^{\infty}$-differentiability of $R$ results from the $\mathscr{C}^{\infty}$-differentiability  of $\boldsymbol{\mathcal{R}}$. The first derivative in the direction $\xi$ is 
\begin{equation*}
\begin{split}
d\boldsymbol{\mathcal{R}}[r,\xi]\uu=&-\Tr\left(d \boldsymbol{\mathcal{R}}[r,\xi]\uu\right)
\\
=&-\Tr\left(\transposee{[\mathcal{G}(r)\xi]}[\boldsymbol{\mathcal{R}}(r)\uu]\right)-\mathcal{D}(r)\xi\cdot \Tr\left(-\boldsymbol{\mathcal{R}}(r)\uu]\right)
\\
=&-\sum_{i=1}^3\left(\mathcal{G}(r)\xi_{i}\cdot\boldsymbol{\mathcal{R}}(r)\uu_{i}\right)- \mathcal{D}(r)\xi\cdot \mathcal{R}(r)\uu.
\end{split}
\end{equation*}
\end{proof}
Higher order derivatives of the tangential vector curl operator and of the surface scalar curl operator can be obtained by applying these results recursively.

\medskip

In view of the integral representations of the  operators $\Pp_{r}C_{\kappa}^r\Pp_{r}^{-1}$ and $\Pp_{r}M_{\kappa}^r\Pp_{r}^{-1}$, we have to  study the G\^ateaux differentiability of the  mappings 
 $$
 \begin{array}{lll}
 r&\mapsto&\tau_{r}\Delta_{\Gamma_{r}}^{-1}\Div_{\Gamma_{r}}\tau_{r}^{-1}
 \\
 r&\mapsto&\tau_{r}\Delta_{\Gamma_{r}}^{-1}\rot_{\Gamma_{r}}\tau_{r}^{-1}.
 \end{array}
 $$ 
 We have seen that for $r\in B^{\infty}(0,\varepsilon)$ the operator $\rot_{\Gamma_{r}}$ is linear and continuous from $\HH^{s+1}(\Gamma_{r})$ to $H^s_{*}(\Gamma_{r})$, that the operator  $\Div_{\Gamma_{r}}$ is linear and continuous from $\TT\HH^{s+1}(\Gamma_{r})$ to  $H_{*}^s(\Gamma_{r})$ and that $\Delta_{\Gamma_{r}}^{-1}$ is defined from $H^{s}_{*}(\Gamma_{r})$ to $H^{s+2}(\Gamma_{r})\slash\C$. To use the chain rules,  it is  necessary to prove that the derivatives at $r=0$ act between  the spaces $\HH^{s+1}(\Gamma)$ and $H^s_{*}(\Gamma)$ for the scalar curl operator, between the spaces $\TT\HH^{s+1}(\Gamma)$ and $H^s_{*}(\Gamma)$  for the divergence operator and between the spaces   $H^s_{*}(\Gamma)$ and $H^{s+2}(\Gamma)/\C$ for the Laplace--Beltrami operator. An important observation is 
$$
u_{r}\in H^s_{*}(\Gamma_{r})\text{ if and only if }J_{r}\,u_{r}\circ(\Id+r)\in H^s_{*}(\Gamma).
$$
Using the duality  \eqref{dualrot} on the boundary $\Gamma_{r}$ we can write for any vector density $\jj\in\HH^{s+1}(\Gamma)$ and any scalar density $\varphi\in H^{-s}(\Gamma)$ 
\begin{equation}\label{rotdual}
\begin{aligned}
\int_{\Gamma}\tau_{r}\Big(\rot_{\Gamma_{r}}(\tau_{r}^{-1}\jj)\Big)\cdot \varphi \,J_{r}ds &= \int_{\Gamma_{r}}\rot_{\Gamma_{r}}(\tau_{r}^{-1}\jj)\cdot(\tau_{r}^{-1}\varphi)\,ds
\\
&=\int_{\Gamma_{r}}(\tau_{r}^{-1}\jj)\cdot\Rot_{\Gamma_{r}}(\tau_{r}^{-1}\varphi)\,ds
\\
&=\int_{\Gamma}\jj\cdot\tau_{r}\Big(\Rot_{\Gamma_{r}}(\tau_{r}^{-1}\varphi)\Big)\,J_{r}\,ds.
\end{aligned}
\end{equation}
Taking $\varphi\in\R$ (i.e. $\varphi$ is a constant function) then the right-hand side  vanishes. This means that 
  $J_{r}\big(\tau_{r}\rot_{\Gamma_{r}}(\tau_{r}^{-1}\jj)\big)$  is of vanishing mean value. 
  
\begin{lemma}
The mapping 
$$
\begin{array}{cccc}
\mathcal{R}^{*}: &B^{\infty}(0,\varepsilon)&\rightarrow&\mathscr{L}(\HH^{s+1}(\Gamma),\HH^{s}_{*}(\Gamma))
\\
&r&\mapsto&J_{r}\,\tau_{r}\rot_{\Gamma_{r}}\tau_{r}^{-1}
\end{array}
$$ 
is $\mathscr{C}^{\infty}$-G\^ateaux differentiable and we have in any direction $\xi=(\xi_{1},\xi_{2},\xi_{3})\in\mathscr{C}^{\infty}(\Gamma,\R^3)$
$$
\left\{\begin{array}{ccl}\dfrac{\partial \mathcal{R}^{*}}{\partial r}[r,\xi]&=&-\mathcal{J}(r)\cdot\sum\limits_{i=1}^3\left(\mathcal{G}(r)\xi_{i}\cdot\boldsymbol{\mathcal{R}}(r)\uu_{i}\right),\vspace{2mm}\\
\dfrac{\partial^m \mathcal{R}^{*}}{\partial r^m}[r,\xi]&=&0,\;\text{ for all }m\ge2.\end{array}\right.
$$
\end{lemma}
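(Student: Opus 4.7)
My plan is to write $\mathcal{R}^*(r) = \mathcal{J}(r)\mathcal{R}(r)$ and apply the Leibniz rule for G\^ateaux derivatives. Lemmas \ref{J} and \ref{rot1} give $\mathscr{C}^\infty$-G\^ateaux differentiability of the two factors, so the product is $\mathscr{C}^\infty$-G\^ateaux differentiable into $\mathscr{L}(\HH^{s+1}(\Gamma), H^s(\Gamma))$; the refinement to $H^s_*(\Gamma)$ follows from the observation made just before the statement, namely that \eqref{rotdual} with a constant test function forces $\mathcal{R}^*(r)\uu$ to have vanishing mean on $\Gamma$. For the first derivative, the Leibniz rule gives $d\mathcal{R}^*[r,\xi] = d\mathcal{J}[r,\xi]\mathcal{R}(r) + \mathcal{J}(r)d\mathcal{R}[r,\xi]$; substituting $d\mathcal{J}[r,\xi] = \mathcal{J}(r)\mathcal{D}(r)\xi$ and the expression for $d\mathcal{R}[r,\xi]\uu$ from Lemma \ref{rot1}, the two contributions proportional to $\mathcal{J}(r)\mathcal{D}(r)\xi\cdot\mathcal{R}(r)\uu$ cancel and yield exactly the stated formula.

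For the vanishing of $\partial^m\mathcal{R}^*/\partial r^m$ for $m\ge 2$, the underlying reason is that $\mathcal{R}^*(r)\uu$ is in fact an affine function of $r$. I would establish this starting from the duality
\begin{equation*}
\int_\Gamma \mathcal{R}^*(r)\uu\cdot\varphi\,ds = \int_\Gamma \uu\cdot J_r\boldsymbol{\mathcal{R}}(r)\varphi\,ds,
\end{equation*}
and using the Piola identity $J_r\nn_r(x_r) = \com(\Id+DR(x))\nn(x)$ (with $R$ a smooth extension of $r$) together with the wedge-product rule $Av\wedge Aw = (\det A)A^{-T}(v\wedge w)$, to collapse the right-hand integrand to $\uu\cdot(\Id+DR)\Rot_\Gamma\varphi$. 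Since $\Rot_\Gamma\varphi$ is tangent to $\Gamma$, the matrix $DR$ acts on it only through the tangential derivatives $\nabla_\Gamma r_i$, so this expression is affine in $r$. Integration by parts via $\rot_\Gamma(u_i\nabla_\Gamma r_i) = -\nabla_\Gamma r_i\cdot\Rot_\Gamma u_i$ (a direct consequence of $\rot_\Gamma\nabla_\Gamma = 0$) then yields the closed form
\begin{equation*}
\mathcal{R}^*(r)\uu = \rot_\Gamma\uu - \sum_{i=1}^3 \nabla_\Gamma r_i\cdot\Rot_\Gamma u_i,
\end{equation*}
manifestly affine in $r$, so all G\^ateaux derivatives of order two or higher are identically zero.

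The main obstacle is this Piola step, which reaches outside the intrinsic surface-operator calculus developed in Section \ref{SurfDiffOp}. A fully internal alternative is to differentiate the first-derivative formula once more using the Leibniz rule and Lemmas \ref{N}--\ref{rot2}, and to verify the cancellation by hand. At first order in $r$ the cancellation is already transparent: the $\Div_\Gamma r$ contributions drop out immediately, and the remaining $[\nabla_\Gamma r]\nabla_\Gamma\xi_i\cdot\Rot_\Gamma u_i$ terms annihilate one another via the identity $a\cdot A^T b = Aa\cdot b$; the general statement then follows by induction on the order of differentiation.
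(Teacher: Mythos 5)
Your proof is correct, and for the differentiability statement and the first\-/derivative formula it follows the paper's own route: the Leibniz rule applied to $\mathcal{R}^*(r)=\mathcal{J}(r)\mathcal{R}(r)$ with Lemmas \ref{J} and \ref{rot1}, cancellation of the two $\mathcal{D}(r)\xi$ terms, and the mean-zero property read off from \eqref{rotdual} with a constant test function. Where you genuinely diverge is the vanishing of the higher derivatives. The paper stays inside the intrinsic surface-operator calculus: it differentiates the first-derivative formula once more at an \emph{arbitrary} base point $r$ in a direction $\eta$, observes that the $\mathcal{D}(r)\eta$ contributions coming from $d\mathcal{J}$ and from $d\boldsymbol{\mathcal{R}}$ cancel, that the normal component of $d\mathcal{G}[r,\eta]\xi_{i}$ is annihilated by the tangential field $\boldsymbol{\mathcal{R}}(r)u_{i}$, and that the two surviving terms kill each other via $Aa\cdot b=a\cdot\transposee{A}b$; hence $d^{2}\mathcal{R}^*[r;\xi,\eta]\equiv0$ identically in $r$, and all higher derivatives vanish with no induction needed. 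Your primary route --- the Piola/cofactor identity $J_{r}\,\tau_{r}\nn_{r}=\com(\Id+DR)\,\nn$ together with $Av\wedge Aw=(\det A)\,\transposee{A^{-1}}(v\wedge w)$, yielding $\mathcal{J}(r)\boldsymbol{\mathcal{R}}(r)\varphi=(\Id+DR)\Rot_{\Gamma}\varphi$ and the affine closed form $\mathcal{R}^*(r)\uu=\rot_{\Gamma}\uu-\sum_{i}\nabla_{\Gamma}r_{i}\cdot\Rot_{\Gamma}u_{i}$ --- is a genuinely different and more conceptual argument: it explains \emph{why} everything beyond first order dies (the operator is affine in $r$), and it is consistent with the stated derivative since one checks $\mathcal{J}(r)\,\mathcal{G}(r)\xi_{i}\cdot\boldsymbol{\mathcal{R}}(r)u_{i}=\nabla_{\Gamma}\xi_{i}\cdot\Rot_{\Gamma}u_{i}$ for every $r$. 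The price is stepping outside the intrinsic calculus of Section \ref{SurfDiffOp} to use an ambient extension of $r$, which the paper never needs. One caveat on your fallback sketch: verifying the cancellation only ``at first order in $r$'', i.e.\ at the base point $r=0$, does not by itself control derivatives of order $\ge 3$; the clean statement is the paper's, namely that the second derivative vanishes at \emph{every} base point, which renders the induction superfluous.
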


\begin{proof}  
Looking at the expression of the derivatives of the tangential gradient and of the tangential vector curl in lemmas \ref{nabla} and \ref{rot2} we prove iteratively that all the derivatives of $\mathcal{G}(r)\varphi$ and of $\boldsymbol{\mathcal{R}}(r)\varphi$ are composed of $\mathcal{G}(r)\varphi$ and $\boldsymbol{\mathcal{R}}(r)\varphi$, so that  for $\varphi\in\R$ the derivatives of the right-hand side of \eqref{rotdual} vanishes. We have for all  $m\in\N$ and $\uu\in\HH^{s+1}(\Gamma)$: 
$$
\frac{\partial^m}{\partial r^m}\left\{\int_{\Gamma}J_{r}\big(\tau_{r}\rot_{\Gamma_{r}}(\tau_{r}^{-1}\jj)\big) \,ds\right\}[r,\xi]=\int_{\Gamma}\frac{\partial^m}{\partial r^m}\left\{\mathcal{R}^{*}\right\}[r,\xi]\jj \,ds=0.
$$ 
It can also be obtained by directly deriving the expression of $\mathcal{R}^{*}\jj$ using the formulas obtained in the lemmas \ref{N} to \ref{rot1}.  
The first derivative of $\mathcal{R}^{*}$ is given by
$$
 \begin{array}{ll}
  d\left(\mathcal{R}^{*}\right)[r,\xi]\uu&=-\mathcal{J}(r)\cdot\sum\limits_{i=1}^3\left(\mathcal{G}(r)\xi_{i}\cdot\boldsymbol{\mathcal{R}}(r)\uu_{i}\right)\\
  &=-J_{r}.\tau_{r}\left(\sum\limits_{i=1}^3\nabla_{\Gamma_{r}}(\tau_{r}^{-1}\xi_{i})\cdot\Rot_{\Gamma_{r}}(\tau_{r}^{-1}\uu_{i})\right)
 \end{array}
$$
The right-hand side is of vanishing mean value since the space $\nabla_{\Gamma_{r}}H^s(\Gamma_{r})$ is orthogonal to $\Rot_{\Gamma_{r}}H^s(\Gamma_{r})$ for the  $\LL^2(\Gamma_{r})$ duality product. For the second order derivative we derive  $r\mapsto d\left(\mathcal{R}^{*}\right)[r,\xi]\jj$ in the direction $\eta=(\eta_{1},\eta_{2},\eta_{3})\in\mathscr{C}^{\infty}(\Gamma,\R^3)$ and we obtain

$$\begin{array}{lcl}d^2\left(\mathcal{R}^{*}\right)[r;\xi,\eta]\uu&=&\mathcal{J}(r)\cdot\sum\limits_{i=1}^3\left([\mathcal{G}(r)\eta]\mathcal{G}(r)\xi_{i}\cdot\boldsymbol{\mathcal{R}}(r)\uu_{i}\right)\\&&-\mathcal{J}(r)\cdot\sum\limits_{i=1}^3\left(\mathcal{G}(r)\xi_{i}\cdot\transposee{[\mathcal{G}(r)\eta]}\boldsymbol{\mathcal{R}}(r)\uu_{i}\right)\\&=&0.\end{array}$$
Higher order derivatives of $\mathcal{R}^{*}$ vanish.
\end{proof}

For the surface divergence, similar arguments can be applied. Using the duality  \eqref{dualgrad}, we can  write for $\jj\in\TT\HH^{s+1}(\Gamma)$: 
$$
\begin{aligned}
\displaystyle{\int_{\Gamma}\tau_{r}\Big(\Div_{\Gamma_{r}}(\pi(r)^{-1}\jj)\Big)\cdot \varphi \,J_{r}ds}
  &=\displaystyle{\int_{\Gamma_{r}}\Div_{\Gamma_{r}}(\pi(r)^{-1}\jj)\cdot(\tau_{r}^{-1}\varphi)\,ds}
\\
&=\displaystyle{-\int_{\Gamma_{r}}(\pi(r)^{-1}\jj)\cdot\nabla_{\Gamma_{r}}(\tau_{r}^{-1}\varphi)\,ds}
\\
&=\displaystyle{-\int_{\Gamma}\tau_{r}(\pi(r)^{-1}\jj)\cdot\tau_{r}\Big(\nabla_{\Gamma_{r}}(\tau_{r}^{-1}\varphi)\Big)\,J_{r}ds}.
\end{aligned}
$$
This shows that for constant $\varphi\in\R$ the right-hand side vanishes and therefore  $J_{r}\big(\tau_{r}\Div_{\Gamma_{r}}(\pi^{-1}(r)\jj)\big)$  is of vanishing mean value. We obtain the following result.
\begin{lemma}
The mapping
$$
\begin{array}{cccc}
\mathcal{D}^{*}:&B^{\infty}(0,\varepsilon)&\rightarrow&\mathscr{L}(\TT\HH^{s+1}(\Gamma),\HH^{s}_{*}(\Gamma))
\\
&r&\mapsto&J_{r}\,\tau_{r}\Div_{\Gamma_{r}}\pi^{-1}(r)
\end{array}
$$ 
 is $\mathscr{C}^{\infty}$-G\^ateaux differentiable.
\end{lemma}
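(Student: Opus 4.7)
My plan is to parallel the preceding lemma on $\mathcal{R}^*$: identify a factorization of $\mathcal{D}^*(r)$ into components each of which is $\mathscr{C}^\infty$-Gâteaux differentiable by the earlier lemmas, then apply the chain and product rules, and finally check that the range lies in the codimension-one subspace $H^s_*(\Gamma)$.

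First I would factor
$$
\mathcal{D}^*(r) \;=\; \mathcal{J}(r)\cdot\bigl(\mathcal{D}(r)\circ \Pi(r)\bigr),
$$
where $\Pi(r) := \tau_{r}\,\pi^{-1}(r)$ acts, for $\uu\in\TT\HH^{s+1}(\Gamma)$, by
$$
\Pi(r)\uu \;=\; \uu \;-\; \frac{\mathcal{N}(r)\cdot\uu}{\mathcal{N}(r)\cdot\nn}\,\nn .
$$
For $\varepsilon$ small enough, $\mathcal{N}(r)\cdot\nn$ stays close to $1$ uniformly on $\Gamma$, so the quotient is a well-defined smooth multiplier. Since $\mathcal{N}$ is $\mathscr{C}^\infty$-Gâteaux differentiable into $\mathscr{C}^\infty(\Gamma,\R^3)$ by Lemma \ref{N}, and pointwise multiplication by a $\mathscr{C}^\infty(\Gamma)$-function is continuous on $\HH^{s+1}(\Gamma)$, the map $r\mapsto\Pi(r)$ is $\mathscr{C}^\infty$-Gâteaux differentiable from $B^\infty(0,\varepsilon)$ into $\mathscr{L}(\TT\HH^{s+1}(\Gamma),\HH^{s+1}(\Gamma))$.

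Next, Lemma \ref{D} provides the $\mathscr{C}^\infty$-Gâteaux differentiability of $\mathcal{D}(r)$ into $\mathscr{L}(\HH^{s+1}(\Gamma),H^s(\Gamma))$, and Lemma \ref{J} provides the same for $\mathcal{J}(r)$ as a smooth scalar multiplier on $H^s(\Gamma)$. Applying the chain rule to the composition $\mathcal{D}(r)\circ\Pi(r)$ and then the Leibniz rule to the product with $\mathcal{J}(r)$ yields the $\mathscr{C}^\infty$-Gâteaux differentiability of $\mathcal{D}^*$ as a map into $\mathscr{L}(\TT\HH^{s+1}(\Gamma),H^s(\Gamma))$. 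The first derivative in the direction $\xi$ could be written down explicitly by combining the formulas of Lemmas \ref{N}, \ref{J} and \ref{D}, but in contrast with the case of $\mathcal{R}^*$ no systematic vanishing of higher derivatives is to be expected: the orthogonality of $\nabla_{\Gamma_{r}}H^{s}(\Gamma_r)$ and $\Rot_{\Gamma_{r}}H^{s}(\Gamma_r)$ that killed the second and higher derivatives of $\mathcal{R}^*$ has no direct analogue here.

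Finally, to confirm that the range actually lies in $H^s_*(\Gamma)$, I would invoke the duality identity displayed immediately before the lemma: inserting the constant test function $\varphi\equiv 1$ makes its right-hand side vanish, because $\tau_{r}^{-1}\varphi$ is constant and hence $\nabla_{\Gamma_{r}}(\tau_{r}^{-1}\varphi)=0$. This gives $\int_\Gamma \mathcal{D}^*(r)\jj\,ds = 0$ for every $r\in B^\infty(0,\varepsilon)$ and every $\jj$, which is an identity in $r$ preserved under differentiation, so all Gâteaux derivatives of $\mathcal{D}^*$ take values in $H^s_*(\Gamma)$ as well. The only real obstacle in this scheme is bookkeeping: one must verify that $\Pi(r)$ really lands in the full (non-tangential) space $\HH^{s+1}(\Gamma)$ so that $\mathcal{D}(r)$ can be applied to it — which is precisely why the tangential-projection pullback $\pi^{-1}(r)$, rather than $\tau_r^{-1}$ alone, appears in the definition of $\mathcal{D}^*$.
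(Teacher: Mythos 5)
Your proposal is correct and follows essentially the same route as the paper, which simply notes that ``similar arguments'' to the $\mathcal{R}^{*}$ case apply and uses the duality identity \eqref{dualgrad} with constant $\varphi$ to obtain the vanishing mean value. Your explicit factorization $\mathcal{D}^{*}(r)=\mathcal{J}(r)\cdot\bigl(\mathcal{D}(r)\circ\tau_{r}\pi^{-1}(r)\bigr)$, with the formula for $\tau_{r}\pi^{-1}(r)$ in terms of $\mathcal{N}(r)$, just makes precise the details the paper leaves implicit, and your observation that the higher derivatives need not vanish here (unlike for $\mathcal{R}^{*}$) is consistent with the paper.
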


Now it remains to analyze the inverse of the Laplace--Beltrami operator $\Delta_{\Gamma}$.
We apply the following abstract result on the G\^ateaux derivative of the inverse in a Banach algebra.
We leave its proof to the reader.
\begin{lemma}\label{d-1} 
 Let $U$ be an open subset of a Fr\'echet space $\mathcal{X}$ and let $\mathcal{Y}$ be a Banach algebra. Assume that $f : U\rightarrow \mathcal{Y}$ is G\^ateaux differentiable at $r_{0}\in U$ and that $f(r)$ is invertible in $\mathcal{Y}$ for all $r\in U$. Then $g$ is G\^ateaux differentiable  at $r_{0}$ and its first derivative in the  direction $\xi\in\mathcal{X}$ is 
\begin{equation}d f[r_{0},\xi] = -f(r_{0})^{-1} \circ d f[r_{0},\xi] \circ f(r_{0})^{-1}.
\end{equation}
 Moreover if $f$ is $\mathscr{C}^m$-G\^ateaux differentiable then $g$ is, too.
\end{lemma}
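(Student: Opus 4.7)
Writing $g(r):=f(r)^{-1}$, the argument rests on the algebraic identity
\begin{equation*}
 g(r_{0}+t\xi)-g(r_{0}) = -f(r_{0}+t\xi)^{-1}\bigl[f(r_{0}+t\xi)-f(r_{0})\bigr]f(r_{0})^{-1},
\end{equation*}
valid for any two invertible elements. Dividing by $t$ and letting $t\to 0$ will produce $dg[r_{0},\xi]=-f(r_{0})^{-1}\circ df[r_{0},\xi]\circ f(r_{0})^{-1}$, \emph{provided} we can ensure that $f(r_{0}+t\xi)^{-1}\to f(r_{0})^{-1}$ in $\mathcal{Y}$ and that multiplication in $\mathcal{Y}$ is jointly continuous. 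The second point is built into the Banach-algebra hypothesis; the first is the only analytic input I would need to justify separately.

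To establish convergence of the inverses I would invoke the standard Neumann-series argument: the set $\mathcal{Y}^{\times}$ of invertible elements is open, and the inversion map $a\mapsto a^{-1}$ is continuous on $\mathcal{Y}^{\times}$, since for $\|a^{-1}(b-a)\|$ small one can write $b=a(\mathrm{Id}+a^{-1}(b-a))$ and expand $b^{-1}$ as a convergent geometric series whose terms depend continuously on $b$. G\^ateaux differentiability of $f$ along $\xi$ implies that the scalar map $t\mapsto f(r_{0}+t\xi)$ is continuous at $t=0$, so continuity of inversion forces $f(r_{0}+t\xi)^{-1}\to f(r_{0})^{-1}$. Repeating the argument at arbitrary $r\in U$ delivers the pointwise identity
\begin{equation*}
 dg[r,\xi]=-g(r)\circ df[r,\xi]\circ g(r)
\end{equation*}
on all of $U\times\mathcal{X}$.

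For the propagation of regularity I would argue by induction on $m$. The base case $m=0$ is just continuity of $g$, which has already been obtained. For the induction step, the formula above exhibits $dg$ as the composition of the two $\mathscr{C}^{m-1}$-maps $g$ (by the inductive hypothesis) and $df$ (by hypothesis on $f$) together with the jointly continuous bilinear multiplication of $\mathcal{Y}$. The chain-rule calculus for G\^ateaux derivatives in Fr\'echet spaces recalled in the first part of this work \cite{CostabelLeLouer} then shows that $dg\in\mathscr{C}^{m-1}$, hence $g\in\mathscr{C}^{m}$.

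The only subtle point is the interplay between the weak notion of G\^ateaux differentiability used throughout the paper and the iterated chain rule required by the induction. With the conventions fixed in Part I — which guarantee enough joint continuity of directional derivatives in base point and direction — this concern is entirely absorbed by the framework, and the induction proceeds without any genuine obstacle.
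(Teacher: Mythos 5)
Your argument is correct and is precisely the standard proof that the authors intend when they write ``we leave its proof to the reader'': the resolvent identity $g(r_{0}+t\xi)-g(r_{0})=-f(r_{0}+t\xi)^{-1}\bigl[f(r_{0}+t\xi)-f(r_{0})\bigr]f(r_{0})^{-1}$, continuity of inversion via the Neumann series, and induction on $m$ using the resulting formula for $dg$. Your reading of the (slightly garbled) statement, namely $g(r):=f(r)^{-1}$ and $dg[r_{0},\xi]=-f(r_{0})^{-1}\circ df[r_{0},\xi]\circ f(r_{0})^{-1}$, is the intended one.
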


From the preceding results we deduce the $\mathscr{C}^{\infty}$-G\^ateaux differentiability of 
the mapping 
$$
\begin{array}{cccl}
\mathcal{L}^{*} :& B^{\infty}(0,\varepsilon)&\rightarrow&\mathscr{L}(H^{s+2}(\Gamma),H^{s}_{*}(\Gamma))
\\
&r&\mapsto& J_{r}\tau_{r}\Delta_{\Gamma_{r}}\tau_{r}^{-1}=-\mathcal{R}^*(r)\boldsymbol{\mathcal{R}}(r).
\end{array}
$$
Let us note that $\tau_{r}$ induces an isomorphism between the quotient spaces 
$H^s(\Gamma_{r})/\C$  and $H^s(\Gamma)/\C$.

\begin{lemma}\label{delta-1} 
The mapping 
$$
\begin{array}{ccc}
B^{\infty}(0,\varepsilon)&\rightarrow&\mathscr{L}(H^{s}_{*}(\Gamma), H^{s+2}(\Gamma)\slash\C)
\\
r&\mapsto&\big(\mathcal{L}^{*}(r)\big)^{-1}
\end{array}
$$ 
is $\mathscr{C}^{\infty}$-G\^ateaux differentiable and we have in any direction $\xi\in\mathscr{C}^{\infty}(\Gamma,\R^3)$
\begin{equation}\label{dLapBm1}
d\left\{r\mapsto\big(\mathcal{L}^{*}(r)\big)^{-1}\right\}[0,\xi]=\;-\Delta_{\Gamma}^{-1}\circ d\mathcal{L}^{*}[0,\xi]\circ\Delta_{\Gamma}^{-1}.
\end{equation}  
\end{lemma}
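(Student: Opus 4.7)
The plan is to reduce the claim to a situation where Lemma \ref{d-1} applies verbatim. The obstruction is that $\mathcal{L}^{*}(r)$ takes values in $\mathscr{L}(H^{s+2}(\Gamma)/\C,H^{s}_{*}(\Gamma))$, which is a space of operators between two distinct Hilbert spaces and hence not a Banach algebra. To fix this I would compose on the right with the fixed reference isomorphism $\Delta_{\Gamma}^{-1}:H^{s}_{*}(\Gamma)\to H^{s+2}(\Gamma)/\C$, setting
$$
\mathcal{A}(r):=\mathcal{L}^{*}(r)\circ\Delta_{\Gamma}^{-1}\in\mathscr{L}(H^{s}_{*}(\Gamma)),
$$
so that $\mathcal{A}(0)=\Id_{H^{s}_{*}(\Gamma)}$ and $\mathcal{A}$ lands in a genuine Banach algebra.

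I would then carry out two steps. First, $\mathscr{C}^{\infty}$-Gâteaux differentiability of $\mathcal{A}$: using the identity $\mathcal{L}^{*}(r)=-\mathcal{R}^{*}(r)\boldsymbol{\mathcal{R}}(r)$ established just before the statement, together with the $\mathscr{C}^{\infty}$-Gâteaux differentiability of $\mathcal{R}^{*}$ and $\boldsymbol{\mathcal{R}}$ (previous lemmas) and the bilinearity of composition in spaces of continuous linear operators, the composition $r\mapsto\mathcal{L}^{*}(r)$ is $\mathscr{C}^{\infty}$-Gâteaux differentiable, hence so is $\mathcal{A}$. Second, pointwise invertibility of $\mathcal{A}(r)$ for every $r\in B^{\infty}(0,\varepsilon)$: since $\Gamma_{r}=(\Id+r)\Gamma$ is a smooth simply connected closed surface, Lemma \ref{LapBel} applied on $\Gamma_{r}$ gives that $\Delta_{\Gamma_{r}}:H^{s+2}(\Gamma_{r})/\C\to H^{s}_{*}(\Gamma_{r})$ is an isomorphism; combined with the isomorphism $\tau_{r}$ (which preserves constants, so descends to the quotient) and multiplication by the positive smooth function $J_{r}$ (which, paired with $\tau_{r}$, preserves the mean-zero condition by the change-of-variables identity $\int_{\Gamma}J_{r}(\tau_{r}v)\,ds=\int_{\Gamma_{r}}v\,ds$), the operator $\mathcal{L}^{*}(r)$ is an isomorphism, so $\mathcal{A}(r)$ is invertible in $\mathscr{L}(H^{s}_{*}(\Gamma))$. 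Lemma \ref{d-1} then yields that $r\mapsto\mathcal{A}(r)^{-1}$ is $\mathscr{C}^{\infty}$-Gâteaux differentiable, and hence so is $r\mapsto(\mathcal{L}^{*}(r))^{-1}=\Delta_{\Gamma}^{-1}\circ\mathcal{A}(r)^{-1}$.

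For the formula \eqref{dLapBm1}, I would evaluate at $r=0$, where $\mathcal{A}(0)=\Id$; Lemma \ref{d-1} then collapses to
$$
d(\mathcal{A}^{-1})[0,\xi]=-d\mathcal{A}[0,\xi]=-d\mathcal{L}^{*}[0,\xi]\circ\Delta_{\Gamma}^{-1},
$$
and composing on the left with the constant factor $\Delta_{\Gamma}^{-1}$ gives exactly \eqref{dLapBm1}. The only delicate point in the whole argument is the bookkeeping of the mean-value and quotient structures along the factorization $\mathcal{L}^{*}(r)=-\mathcal{R}^{*}(r)\boldsymbol{\mathcal{R}}(r)$, needed to guarantee that $\mathcal{A}(r)$ genuinely lands in $\mathscr{L}(H^{s}_{*}(\Gamma))$; this is precisely the reason the preceding lemmas are formulated with the Jacobian weight $J_{r}$ forcing the image into $H^{s}_{*}(\Gamma)$, and so I expect no genuine obstacle beyond this verification.
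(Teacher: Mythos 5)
Your proof is correct and follows essentially the same route as the paper: establish the invertibility of $\mathcal{L}^{*}(r)$ from $H^{s+2}(\Gamma)/\C$ to $H^{s}_{*}(\Gamma)$ via Lemma~\ref{LapBel} applied on $\Gamma_{r}$, use the factorization $\mathcal{L}^{*}(r)=-\mathcal{R}^{*}(r)\boldsymbol{\mathcal{R}}(r)$ for differentiability, and conclude with Lemma~\ref{d-1}. Your additional step of conjugating with the fixed isomorphism $\Delta_{\Gamma}^{-1}$ so that Lemma~\ref{d-1} is invoked in the genuine Banach algebra $\mathscr{L}(H^{s}_{*}(\Gamma))$ is a welcome tightening of the paper's more direct (and slightly informal) application of that lemma to operators acting between two different spaces.
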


\begin{proof}
We have seen in section \ref{BoundIntOp} that the Laplace--Beltrami operator is invertible  from $H^{s+2}(\Gamma_{r})\slash\C$ to $H^s_{*}(\Gamma_{r})$.  As a consequence $\mathcal{L}^{*}(r)$ is invertible from $H^{s+2}(\Gamma)\slash\C$ to $H^s_{*}(\Gamma)$. We conclude by using Lemma \ref{d-1}.
 \end{proof}

Let us give another formulation of \eqref{dLapBm1}.
For any $u\in H^{s+2}(\Gamma)$ and $\varphi\in H^{-s}(\Gamma)$ we have
$$
\int_{\Gamma}\tau_{r}\Big(\Delta_{\Gamma_{r}}(\tau_{r}^{-1}u)\Big)\cdot \varphi \,J_{r}ds=-\int_{\Gamma}\Big(\mathcal{G}(r)u\cdot\mathcal{G}(r)\varphi\Big)\,J_{r}ds.
$$
It is more convenient to differentiate the right-hand side than the left hand side.  For $f\in H^s_{*}(\Gamma_{r})$, the element $\big(\mathcal{L}^{*}(r)\big)^{-1}f$ is the solution $u$ of 
$$
-\int_{\Gamma}\Big(\mathcal{G}(r)u\cdot\mathcal{G}(r)\varphi\Big)\,J_{r}ds=\int_{\Gamma}f\cdot\varphi\,ds,\qquad\text{ for all }\varphi\in H^s(\Gamma).
$$
The formula \eqref{dLapBm1} means that the first derivative at $r=0$ in the direction $\xi\in\mathscr{C}^{\infty}(\Gamma,\R^3)$ of $r\mapsto\big(\mathcal{L}^{*}(r)\big)^{-1}f$ is the solution $v$ of 
 \begin{equation}\label{LapLip}
 \int_{\Gamma}\frac{\partial}{\partial r}\Big\{\Big(\mathcal{G}(r)u_{0}\cdot\mathcal{G}(r)\varphi\Big)\,J_{r}\Big\}[0,\xi]ds=-\int_{\Gamma}\nabla_{\Gamma}v\cdot\nabla_{\Gamma}\varphi\,ds,\;\text{ for all }\varphi\in H^s(\Gamma),
 \end{equation}
 with $u_{0}=\Delta_{\Gamma}^{-1}f$.\medskip
 
Now we have all the tools to establish the differentiability properties of the electromagnetic boundary integral operators and then of the solution to the dielectric scattering problem.

 \section{Shape derivatives of the solution of the  dielectric  problem}\label{ShapeSol}
For the shape-dependent integral operators we now use the following simplified notation
$$
\begin{aligned}
\Psi_{E_{\kappa}}(r)&=\Psi_{E_{\kappa}}^r\Pp_{r}^{-1},
   &\Psi_{M_{\kappa}}(r)=\Psi_{M_{\kappa}}^r\Pp_{r}^{-1},\ \;
\\
C_{\kappa}(r)&=\Pp_{r}C^r_{{\kappa}}\Pp_{r}^{-1},
  &\ \;M_{\kappa}(r)=\Pp_{r}M^r_{{\kappa}}\Pp_{r}^{-1}.
\end{aligned}
$$
In the following we use the results of the preceding paper \cite{CostabelLeLouer} about the G\^ateaux differentiability of potentials and boundary integral operators with pseudo-homogeneous kernels.
\begin{theorem}\label{thpsi}
 The mappings  
 $$
 \begin{array}{rcl}B^{\infty}(0,\varepsilon)&\rightarrow& \mathscr{L}(\TT\HH^{-\frac{1}{2}}(\Div_{\Gamma},\Gamma), \HH(\Rot,K_{p}))\\r&\mapsto&\Psi_{E_{\kappa}}(r)\\r&\mapsto&\Psi_{M_{\kappa}}(r)\end{array}
 $$ 
 are infinitely G\^ateaux differentiable. The derivatives can be written in explicit form by differentiating the kernels of the operators $\Psi_{E_{\kappa}}^r$ and $\Psi_{M_{\kappa}}^r$, see  \cite[Theorem 4.7]{CostabelLeLouer}, and by using the formulas for the derivatives of the surface differential operators given in Section~\ref{SurfDiffOp}.
The first derivatives at $r=0$ can be extended to bounded linear operators from $\TT\HH^{\frac{1}{2}}(\Div_{\Gamma},\Gamma)$ to $\HH(\Rot,\Omega)$ and to $\HH_{loc}(\Rot,\overline{\Omega^c})$.
Given  $\jj\in\TT\HH^{\frac{1}{2}}(\Div_{\Gamma},\Gamma)$, the potentials $d\Psi_{E_{\kappa}}[0,\xi]\jj$ and $d\Psi_{M_{\kappa}}[0,\xi]\jj$ satisfy the Maxwell equations 
$$
\Rot\Rot\uu-\kappa^2\uu=0
$$
in $\Omega$ and $\Omega^c$, and the Silver-M\"uller radiation condition. 
\end{theorem}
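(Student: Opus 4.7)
The plan is to exploit the Helmholtz representations of $\Psi_{E_{\kappa}}(r)$ and $\Psi_{M_{\kappa}}(r)$ displayed in Section~\ref{Helmholtzdec}. Applied to $\jj=\nabla_{\Gamma}p+\Rot_{\Gamma}q\in\TT\HH^{-\frac{1}{2}}(\Div_{\Gamma},\Gamma)$, each operator decomposes into a finite sum of compositions of three building blocks: the scalar potentials $\Psi_{\kappa}^{r}$ and their derivatives in $x$, which have pseudo-homogeneous kernels of class $-1$; the pulled-back surface differential operators $\mathcal{G}(r)=\tau_{r}\nabla_{\Gamma_{r}}\tau_{r}^{-1}$, $\boldsymbol{\mathcal{R}}(r)=\tau_{r}\Rot_{\Gamma_{r}}\tau_{r}^{-1}$ and $\tau_{r}\Delta_{\Gamma_{r}}\tau_{r}^{-1}$ acting on the scalar densities $p$ and $q$; and the Jacobian $\mathcal{J}(r)$ implicit in the pull-back of the surface measure. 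Each factor is $\mathscr{C}^{\infty}$-G\^ateaux differentiable: the scalar potentials by Theorem~4.7 of Part~I \cite{CostabelLeLouer}, applicable here because the target set $K_{p}$ is a compact subset of $\R^{3}\setminus\Gamma$ and remains disjoint from $\Gamma_{r}$ for every $r\in B^{\infty}(0,\varepsilon)$ with $\varepsilon$ small; the surface operators by Lemmas~\ref{nabla}--\ref{rot1}. The chain and product rules for G\^ateaux derivatives in Fr\'echet spaces, recalled in Part~I, then yield the announced infinite differentiability, together with explicit formulas obtained by combining the kernel-derivative formulas of Part~I with those of Section~\ref{SurfDiffOp}.

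For the Maxwell equations and the radiation condition at $r=0$, I would differentiate the identities of Lemma~\ref{3.2} with respect to $r$. The identity $(\Rot\Rot-\kappa^{2}\Id)\Psi_{E_{\kappa}}^{r}(\Pp_{r}^{-1}\jj)(x)=0$ holds at each fixed $x\in K_{p}$ and for all $r\in B^{\infty}(0,\varepsilon)$ small enough that $x\notin\Gamma_{r}$; since $\Rot\Rot-\kappa^{2}\Id$ acts in the $x$-variable only, it commutes with G\^ateaux differentiation in $r$. Evaluating at $r=0$ gives $(\Rot\Rot-\kappa^{2}\Id)\,d\Psi_{E_{\kappa}}[0,\xi]\jj=0$ in $\Omega$ and in $\Omega^{c}$, and the same argument applies to $\Psi_{M_{\kappa}}$. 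The Silver--M\"uller condition for the derivative follows from differentiating the uniform-in-$r$ asymptotic expansion of $\Psi_{E_{\kappa}}^{r}(\Pp_{r}^{-1}\jj)(x)$ as $|x|\to\infty$, which is available because $r$ has compact support on $\Gamma$.

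The main obstacle, and the point that needs genuine care, is the extension of the first derivative at $r=0$ to a bounded operator from $\TT\HH^{\frac{1}{2}}(\Div_{\Gamma},\Gamma)$ into $\HH(\Rot,\Omega)$ and $\HH_{\loc}(\Rot,\overline{\Omega^{c}})$. Inspecting the explicit formula term by term, each term is either a scalar potential with pseudo-homogeneous kernel of class $-1$ acting on a derived density, whose mapping properties up to the boundary are given by Lemma~\ref{3.1}, or the image of such a potential under a surface differential operator of Section~\ref{SurfDiffOp}. Differentiation in $r$ either leaves the kernel class unchanged, when it falls on $G_{a}(\kappa,|x-y_{r}|)$ through $y_{r}=y+r(y)$, or costs exactly one order of tangential regularity on the density, when it falls on a $\mathcal{G}(r)$, $\boldsymbol{\mathcal{R}}(r)$ or $\tau_{r}\Delta_{\Gamma_{r}}\tau_{r}^{-1}$ factor. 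This one-derivative loss is compensated by upgrading $\jj$ from the energy space $\TT\HH^{-\frac{1}{2}}(\Div_{\Gamma},\Gamma)$ to $\TT\HH^{\frac{1}{2}}(\Div_{\Gamma},\Gamma)$. The delicate bookkeeping concerns the highest-order terms, coming from the differentiated $\tau_{r}\Delta_{\Gamma_{r}}\tau_{r}^{-1}p$ factor in the Helmholtz representation of $\Psi_{E_{\kappa}}(r)$: here Lemma~\ref{delta-1} together with the factorization $\Delta_{\Gamma}=\Div_{\Gamma}\nabla_{\Gamma}=-\rot_{\Gamma}\Rot_{\Gamma}$ must be used to reorganize the result so that only surface derivatives of factors belonging to Sobolev spaces of sufficiently high order appear, allowing the scalar-potential mapping properties of Lemma~\ref{3.1} to be applied up to $\Gamma$.
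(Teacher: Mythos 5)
Your first step coincides with the paper's: write $\Psi_{E_{\kappa}}(r)\jj$ and $\Psi_{M_{\kappa}}(r)\jj$ through the Helmholtz decomposition as compositions of the scalar potential $\Psi^{r}_{\kappa}\tau_{r}^{-1}$ (pseudo-homogeneous of class $-1$) with the pulled-back surface differential operators, and invoke the chain rule together with Part I and Section~\ref{SurfDiffOp}. Your derivation of the Maxwell equations by commuting $\Rot\Rot-\kappa^{2}\Id$ with the $r$-derivative away from $\Gamma$ is also essentially what the paper does. Where you diverge is on the extension of the first derivative to $\HH(\Rot,\Omega)$ and $\HH_{\loc}(\Rot,\overline{\Omega^{c}})$: the paper first shows only that $d\Psi_{E_{\kappa}}[0,\xi]$ and $d\Psi_{M_{\kappa}}[0,\xi]$ map $\TT\HH^{\frac12}(\Div_{\Gamma},\Gamma)$ into $\LL^{2}$ up to the boundary, and then obtains the curl for free from the intertwining relations $\Rot\Psi_{E_{\kappa}}(r)\jj=\kappa\Psi_{M_{\kappa}}(r)\jj$ and $\Rot\Psi_{M_{\kappa}}(r)\jj=\kappa\Psi_{E_{\kappa}}(r)\jj$, which after differentiation give $\Rot\, d\Psi_{E_{\kappa}}[0,\xi]\jj=\kappa\, d\Psi_{M_{\kappa}}[0,\xi]\jj$ and vice versa; this yields the $\HH(\Rot)$ membership, the Maxwell equations and the radiation condition in one stroke. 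Your term-by-term control of the curl of each summand can in principle be carried out, but it amounts to re-deriving these identities by hand and is considerably more laborious; you should at least record them, since they are also the cleanest route to the Silver--M\"uller condition, which your ``differentiate the uniform asymptotic expansion'' sentence leaves vague.

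There is also a concrete inaccuracy in your regularity bookkeeping, and it concerns precisely the point you call delicate. The whole purpose of Section~\ref{SurfDiffOp} is that the operators $\mathcal{G}(r)$, $\boldsymbol{\mathcal{R}}(r)$, $\mathcal{D}(r)$, $\mathcal{L}^{*}(r)$ \emph{preserve} their mapping orders under differentiation in $r$ (e.g.\ $d\mathcal{G}[r,\xi]$ still maps $H^{s+1}(\Gamma)$ to $\HH^{s}(\Gamma)$); no order of tangential regularity is lost there. The loss of one order that forces the hypothesis $\jj\in\TT\HH^{\frac12}(\Div_{\Gamma},\Gamma)$ comes from the opposite source, namely from differentiating the kernel of the potential $\Psi^{r}_{\kappa}\tau_{r}^{-1}$: the derivative falling on $G_{a}(\kappa,|x-y_{r}|)$ produces a kernel one class more singular, so that, as stated in the introduction, the shape derivative of the potential remains smooth on $K_{p}$ but loses one order of Sobolev regularity up to $\Gamma$. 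Your statement that differentiation of the kernel ``leaves the kernel class unchanged'' while differentiation of the surface operators ``costs exactly one order'' is therefore exactly backwards. Finally, Lemma~\ref{delta-1} (the inverse Laplace--Beltrami operator) plays no role here: $\Delta_{\Gamma_{r}}^{-1}$ does not occur in the Helmholtz representations of $\Psi^{r}_{E_{\kappa}}\Pp_{r}^{-1}$ and $\Psi^{r}_{M_{\kappa}}\Pp_{r}^{-1}$, only in those of $C_{\kappa}(r)$ and $M_{\kappa}(r)$, so the reorganization you describe in your last sentence is not needed for this theorem.
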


\begin{proof}
Let $\jj\in\TT\HH^{-\frac{1}{2}}(\Div_{\Gamma},\Gamma)$ and let $\jj=\nabla_{\Gamma}p+\Rot_{\Gamma}q$ be its Helmholtz decomposition. Recall that $\Psi_{E_{\kappa}}(r)\jj$ and $\Psi_{M_{\kappa}}(r)\jj$ can be written as: 
\begin{equation*}
\begin{split}
\Psi_{E_{\kappa}}(r)\,\jj&=\kappa \Psi^{r}_{\kappa}\tau_{r}^{-1}(\tau_{r}\mathbf{P_{r}}^{-1}\jj)-\dfrac{1}{\kappa}\nabla \Psi^{r}_{\kappa}\tau_{r}^{-1}\big(\tau_{r}\Delta_{\Gamma_{r}}(\tau_{r}^{-1}p)\big),\vspace{2mm} \\\Psi_{E_{\kappa}}(r)\,\jj&=\rot\Psi_{\kappa}^{r}\tau_{r}^{-1}(\tau_{r}\mathbf{P_{r}}^{-1}\jj).
\end{split}
\end{equation*}
By composition of differentiable mappings, we deduce that $r\mapsto\Psi_{E_{\kappa}}(r)$ and $r\mapsto~\Psi_{M_{\kappa}}(r)$ are infinitely G\^ateaux differentiable far from the boundary and that their first derivatives are continuous from $\TT\HH^{\frac{1}{2}}(\Div_{\Gamma},\Gamma)$ to $\LL^2(\Omega)\cup \LL^2_{\loc}(\overline{\Omega^c})$. Recall that we have
$$
\Rot\Psi_{E_{\kappa}}(r)\jj=\kappa\Psi_{M_{\kappa}}(r)\jj\text{ and }\Rot\Psi_{M_{\kappa}}(r)\jj=\kappa\Psi_{E_{\kappa}}(r)\jj.
$$ 
Far from the boundary  we can invert the differentiation with respect to $x$ and the derivation with respect to $r$, which gives 
$$
\Rot d\Psi_{E_{\kappa}}[0,\xi]\jj=\kappa\, d\Psi_{M_{\kappa}}[0,\xi]\jj\text{ and }\Rot d\Psi_{M_{\kappa}}[0,\xi]\jj=\kappa\, d\Psi_{E_{\kappa}}[0,\xi]\jj.
$$
It follows that $d\Psi_{E_{\kappa}}[0,\xi]\jj$ and $ d\Psi_{M_{\kappa}}[0,\xi]\jj$ are in $\HH(\Rot,\Omega)\cup\HH_{\loc}(\Rot,\overline{\Omega^c})$  and that  they satisfy the Maxwell equations and the Silver-M\"uller condition.
\end{proof}

 We recall from Section~\ref{Helmholtzdec} that with the notation of Section~\ref{SurfDiffOp} the operator $C_{\kappa}(r)$ admits the following representation
\begin{equation}
C_{\kappa}(r)\,\jj=\label{C}\mathbf{P_{r}}C^{r}_{\kappa}\mathbf{P_{r}}^{-1}\jj=\nabla_{\Gamma}P(r)+\Rot_{\Gamma}Q(r),
\end{equation} 
where
\begin{equation*}
\begin{array}{lcl}P(r)&=&-\kappa\;(\mathcal{L}^{*}(r))^{-1}\mathcal{R}^{*}(r)\left(\tau_{r}V^{r}_{\kappa}\tau_{r}^{-1}\right)\left[\mathcal{G}(r)p+\boldsymbol{\mathcal{R}}(r)q\right]\end{array}
\end{equation*}
and
\begin{equation*}
\begin{array}{lcl}Q(r)&=&-\kappa\;(\mathcal{L}^{*}(r))^{-1}\mathcal{D}^{*}(r)\pi(r)\tau_{r}^{-1}\left(\tau_{r}V^{r}_{\kappa}\tau_{r}^{-1}\right)\left[\mathcal{G}(r)p+\boldsymbol{\mathcal{R}}(r)q\right]
\vspace{3mm}\\
&&+\dfrac{1}{\kappa}\left(\tau_{r}V^{r}_{\kappa}\tau_{r}^{-1}\right)\left(\tau_{r}\Delta_{\Gamma_{r}}(\tau_{r}^{-1}p)\right).\end{array}
\end{equation*}

Let $\jj\in \TT\HH^{s}(\Div_{\Gamma},\Gamma)$ and let $\jj=\nabla_{\Gamma}\;p+\Rot_{\Gamma}\;q$ be its Helmholtz decomposition. We  want to derive 
$$
\begin{array}{ll}\Pp_{r}C^{r}_{\kappa}\Pp_{r}^{-1}\jj&=\Pp_{r}C^{r}_{\kappa}(\nabla_{\Gamma_{r}}\tau_{r}^{-1}p+\Rot_{\Gamma_{r}}\tau_{r}^{-1}q)\\&=\Pp_{r}(\nabla_{\Gamma_{r}}P_{r}+\Rot_{\Gamma_{r}}Q_{r})\\&=\nabla_{\Gamma}P(r)+\Rot_{\Gamma}Q(r).\end{array}
$$ 
We find
$$
dC_{\kappa}[0,\xi]\jj=\nabla_{\Gamma}dP[0,\xi]+\Rot_{\Gamma}dQ[0,\xi].
$$
Thus the derivative with respect to $r$  of $\mathbf{P_{r}}C^{r}_{\kappa}\mathbf{P_{r}}^{-1}\jj$ is given by the derivatives of the functions $P(r)=\tau_{r}(P_{r})$ and of $Q(r)=\tau_{r}(Q_{r})$.\newline
We also note that for an $r$-dependent vector function $f(r)$ on $\Gamma$ there holds 
$$
d \{\pi(r)\tau_{r}^{-1}f(r)\}[0,\xi]=\pi(0) df[0,\xi].
$$

By composition of infinitely differentiable mappings we obtain the following theorem.

\begin{theorem} 
The mapping: 
$$
\begin{array}{lcl}B^{\infty}(0,\varepsilon)&\rightarrow &\mathscr{L}\left(\TT\HH^{-\frac{1}{2}}(\Div_{\Gamma},\Gamma),\TT\HH^{-\frac{1}{2}}(\Div_{\Gamma},\Gamma)\right)\\r&\mapsto&\Pp_{r}C^{r}_{\kappa}\Pp_{r}^{-1}\end{array}
$$ 
is infinitely G\^ateaux differentiable.  The derivatives can be written in explicit form by differentiating the kernel of the operator $C_{\kappa}^r$, see  \cite[Corollary 4.5]{CostabelLeLouer}, and by using the formulas for the derivatives of the surface differential operators given in Section~\ref{SurfDiffOp}.
\end{theorem}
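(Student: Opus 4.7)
The plan is to differentiate termwise the explicit Helmholtz representation
$\Pp_{r}C^{r}_{\kappa}\Pp_{r}^{-1}\jj = \nabla_{\Gamma}P(r) + \Rot_{\Gamma}Q(r)$
derived in Section~\ref{Helmholtzdec}. Since $\nabla_{\Gamma}$ and $\Rot_{\Gamma}$ are $r$-independent and map $H^{3/2}(\Gamma)/\C$ and $H^{1/2}(\Gamma)/\C$ continuously into $\TT\HH^{-\frac{1}{2}}(\Div_{\Gamma},\Gamma)$, it suffices to show that the scalar mappings $r\mapsto P(r)\in H^{3/2}(\Gamma)/\C$ and $r\mapsto Q(r)\in H^{1/2}(\Gamma)/\C$ are $\mathscr{C}^{\infty}$-G\^ateaux differentiable. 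Parametrizing $\jj=\nabla_{\Gamma}p+\Rot_{\Gamma}q$ with $p\in H^{3/2}(\Gamma)/\C$ and $q\in H^{1/2}(\Gamma)/\C$ via the Helmholtz decomposition, the problem reduces to analyzing $P(r)$ and $Q(r)$ as compositions of $r$-dependent operators acting on the fixed data $(p,q)$.

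Each factor in the formulas for $P(r)$ and $Q(r)$ is known to be $\mathscr{C}^{\infty}$-G\^ateaux differentiable between suitable Sobolev spaces: the surface differential operators $\mathcal{G}(r),\boldsymbol{\mathcal{R}}(r)$ (Lemmas~\ref{nabla}, \ref{rot2}); the Jacobian-weighted scalar curl $\mathcal{R}^{*}(r)$ and divergence $\mathcal{D}^{*}(r)$, whose ranges lie in the $r$-independent mean-zero space $H^{s}_{*}(\Gamma)$; the inverse weighted Laplace--Beltrami operator $(\mathcal{L}^{*}(r))^{-1}$ (Lemma~\ref{delta-1}); the projector $\pi(r)$ and the transported normal $\mathcal{N}(r)$ and Jacobian $\mathcal{J}(r)$ (Lemmas~\ref{N}, \ref{J}); and the pulled-back scalar weakly singular integral operator $\tau_{r}V^{r}_{\kappa}\tau_{r}^{-1}$, whose $\mathscr{C}^{\infty}$-G\^ateaux differentiability follows from Part~I because its kernel is pseudo-homogeneous of class $-1$. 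Explicit formulas for the derivatives of $P(r)$ and $Q(r)$ then come out of the Leibniz and chain rules in Fr\'echet spaces recalled in Part~I.

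The verification to carry out is that the Sobolev orders line up throughout the composition. For $P(r)$, the contribution $\mathcal{G}(r)p+\boldsymbol{\mathcal{R}}(r)q$ lies in $\HH^{-1/2}(\Gamma)$ (the limiting piece being $\boldsymbol{\mathcal{R}}(r)q$); applying $\tau_{r}V^{r}_{\kappa}\tau_{r}^{-1}$ gains one derivative and gives $\HH^{1/2}(\Gamma)$; wedging with the smooth transported normal preserves this; $\mathcal{R}^{*}(r)$ lands in $H^{-1/2}_{*}(\Gamma)$; and $(\mathcal{L}^{*}(r))^{-1}$ returns $H^{3/2}(\Gamma)/\C$. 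The parallel count for $Q(r)$ goes through $\mathcal{D}^{*}(r)\pi(r)\tau_{r}^{-1}$ in place of $\mathcal{R}^{*}(r)$, with the extra term $\frac{1}{\kappa}\tau_{r}V^{r}_{\kappa}\tau_{r}^{-1}(\tau_{r}\Delta_{\Gamma_{r}}\tau_{r}^{-1}p)$ contributing $H^{1/2}(\Gamma)$ directly; the output lies in $H^{1/2}(\Gamma)/\C$. Because each surface differential operator and its derivatives lose exactly one derivative, the weakly singular operator and its derivatives gain exactly one, and $(\mathcal{L}^{*}(r))^{-1}$ preserves the inversion by \eqref{dLapBm1}, the same regularity bookkeeping carries through to all higher-order derivatives.

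The delicate point is ensuring that the mean-value constraint required to invert the Laplace--Beltrami operator persists under G\^ateaux differentiation. This is precisely the role of the starred operators $\mathcal{R}^{*},\mathcal{D}^{*},\mathcal{L}^{*}$: by absorbing the Jacobian $J_{r}$ they force their ranges to lie in the fixed space $H^{s}_{*}(\Gamma)$, and the lemmas of Section~\ref{SurfDiffOp} show that their G\^ateaux derivatives still take values there. With these compatibility checks in place, infinite G\^ateaux differentiability of $r\mapsto\Pp_{r}C^{r}_{\kappa}\Pp_{r}^{-1}$ follows by iterated application of the chain rule for $\mathscr{C}^{\infty}$-G\^ateaux differentiable maps in Fr\'echet spaces.
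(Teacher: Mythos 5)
Your proposal is correct and follows essentially the same route as the paper: differentiate the Helmholtz representation $\Pp_{r}C^{r}_{\kappa}\Pp_{r}^{-1}\jj=\nabla_{\Gamma}P(r)+\Rot_{\Gamma}Q(r)$ and conclude by composition, invoking the $\mathscr{C}^{\infty}$-G\^ateaux differentiability of the surface differential operators of Section~\ref{SurfDiffOp}, of the pulled-back weakly singular operator $\tau_{r}V^{r}_{\kappa}\tau_{r}^{-1}$ from Part I, and of $(\mathcal{L}^{*}(r))^{-1}$ via Lemma~\ref{delta-1}. Your explicit Sobolev-index bookkeeping and the remark on the mean-value constraint handled by the starred operators merely spell out what the paper compresses into ``by composition of infinitely differentiable mappings.''
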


Similarly, recall that the operator $\Pp_{r}M^{r}_{\kappa}\mathbf{P_{r}}^{-1}$ admits the following representation :
\begin{equation*}\label{M}
\mathbf{P_{r}}M^{r}_{\kappa}\mathbf{P_{r}}^{-1}\jj=\nabla_{\Gamma}P'(r)+\Rot_{\Gamma}Q'(r),
\end{equation*}
where
\begin{equation*}\label{P'}
\begin{array}{rl}P'(r)=&(\mathcal{L}^{*}(r))^{-1}(\kappa^2J_{r}\cdot\tau_{r}\nn_{r}\cdot (\tau_{r}V^r_{\kappa}\tau_{r}^{-1})\left[\mathcal{G}(r)p+\boldsymbol{\mathcal{R}}(r)q\right]\vspace{2mm}\\& +(\mathcal{L}^{*}(r))^{-1}(J_{r}\cdot\tau_{r}D_{\kappa}^{r}\tau_{r}^{-1})(\tau_{r}\Delta_{\Gamma_{r}}(\tau_{r}^{-1}p))
\end{array}
\end{equation*}
and
\begin{equation*}
\begin{array}{ll}
Q'(r)=(\mathcal{L}^{*}(r))^{-1}\mathcal{R}^{*}(r)(\tau_{r}(B_{\kappa}^{r}-D_{\kappa}^{r})\tau_{r}^{-1})\left[\mathcal{G}(r)p+\boldsymbol{\mathcal{R}}(r)q\right]
\end{array}
\end{equation*}
with
\begin{equation*}
\begin{aligned}
 \tau_{r}B_{k}^r\Pp_{r}^{-1}\jj &=\tau_{r}\left\{\displaystyle{\int_{\Gamma_{r}}\nabla G(\kappa,|\cdot-y_{r}|)\left(\nn_{r}(\,\cdot\,)\cdot(\nabla_{\Gamma_{r}}\tau_{r}^{-1} p)(y_{r})\right)ds(y_{r})}\right. 
\\
 &\quad +\left.\displaystyle{\int_{\Gamma_{r}}\nabla G(\kappa,|\cdot-y_{r}|)\left(\nn_{r}(\,\cdot\,)\cdot(\Rot_{\Gamma_{r}}\tau_{r}^{-1} q)(y_{r})\right)ds(y_{r})}\Big\}\right\}.
\end{aligned}
\end{equation*}

\begin{theorem}
The mapping:
$$
\begin{array}{lcl}B^{\infty}(0,\varepsilon)&\rightarrow& \mathscr{L}\left(\TT\HH^{-\frac{1}{2}}(\Div_{\Gamma},\Gamma),\TT\HH^{-\frac{1}{2}}(\Div_{\Gamma},\Gamma)\right)\\r&\mapsto&\Pp_{r}M_{\kappa}\Pp_{r}^{-1}\end{array}
$$ 
is infinitely G\^ateaux differentiable. The G\^ateaux derivatives have the same regularity as $M_{\kappa}$, so that they are compact operators in $\TT\HH^{-\frac{1}{2}}(\Div_{\Gamma},\Gamma)$.  The derivatives can be written in explicit form by differentiating the kernel of the operators $M_{\kappa}^r$, see  \cite[Corollary 4.5]{CostabelLeLouer}, and by using the formulas for the derivatives of the surface differential operators given in Section~\ref{SurfDiffOp}.  
\end{theorem}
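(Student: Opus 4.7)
The plan is to leverage the Helmholtz representation
$\Pp_{r}M^{r}_{\kappa}\Pp_{r}^{-1}\jj=\nabla_{\Gamma}P'(r)+\Rot_{\Gamma}Q'(r)$
already established just before the statement, and then to show that each of the ``building blocks'' appearing in $P'(r)$ and $Q'(r)$ is a $\mathscr{C}^{\infty}$-G\^ateaux differentiable mapping with values in the correct Sobolev space. Since $\nabla_{\Gamma}:H^{s+1}(\Gamma)\to\TT\HH^{s}(\Gamma)$ and $\Rot_{\Gamma}:H^{s+1}(\Gamma)\to\TT\HH^{s}(\Gamma)$ are fixed ($r$-independent) continuous operators, and any element of the form $\nabla_{\Gamma}p+\Rot_{\Gamma}q$ lies in $\TT\HH^{-\frac{1}{2}}(\Div_{\Gamma},\Gamma)$ as soon as $(p,q)\in H^{\frac{3}{2}}(\Gamma)/\C\times H^{\frac{1}{2}}(\Gamma)/\C$, it suffices to prove that
$r\mapsto P'(r)\in H^{\frac{3}{2}}(\Gamma)/\C$ and $r\mapsto Q'(r)\in H^{\frac{1}{2}}(\Gamma)/\C$
are infinitely G\^ateaux differentiable, for every given datum $\jj=\nabla_{\Gamma}p+\Rot_{\Gamma}q$.

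First, I would unroll the expressions for $P'(r)$ and $Q'(r)$ as compositions of the following six classes of mappings, each of which has already been shown to be $\mathscr{C}^{\infty}$-G\^ateaux differentiable in the appropriate functional setting: (i) the pulled-back surface differential operators $\mathcal{G}(r),\boldsymbol{\mathcal{R}}(r),\mathcal{D}^{*}(r),\mathcal{R}^{*}(r)$ from Section~\ref{SurfDiffOp}; (ii) the pulled-back inverse Laplace--Beltrami operator $(\mathcal{L}^{*}(r))^{-1}:H^{s}_{*}(\Gamma)\to H^{s+2}(\Gamma)/\C$, via Lemma~\ref{delta-1}; (iii) the auxiliary smooth mappings $\mathcal{N}(r)$ and $\mathcal{J}(r)$ from Lemmas \ref{N} and \ref{J}; (iv) the projection $\pi(r)$ onto tangent planes, whose smoothness in $r$ follows from that of $\mathcal{N}$; (v) the pulled-back scalar single layer operator $\tau_{r}V_{\kappa}^{r}\tau_{r}^{-1}$, whose $\mathscr{C}^{\infty}$-G\^ateaux differentiability from $H^{s-\frac{1}{2}}(\Gamma)$ to $H^{s+\frac{1}{2}}(\Gamma)$ is covered by our Part~I \cite{CostabelLeLouer} since its kernel is pseudo-homogeneous of class $-1$; and (vi) the kernels $\tau_{r}D_{\kappa}^{r}\tau_{r}^{-1}$, $\tau_{r}B_{\kappa}^{r}\tau_{r}^{-1}$, which, because they act on normal components (or, in the present Helmholtz setting, arise always multiplied by the smooth factor $J_{r}\tau_{r}\nn_{r}$), can be cast as integral operators with pseudo-homogeneous kernels of class $-1$, making \cite[Theorem~4.7 / Corollary~4.5]{CostabelLeLouer} applicable. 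Armed with these building blocks, the chain rule for G\^ateaux derivatives in Fr\'echet spaces (recalled in \cite{CostabelLeLouer}) immediately yields infinite G\^ateaux differentiability of $P'(r)$ and $Q'(r)$ with values in $H^{\frac{3}{2}}(\Gamma)/\C$ and $H^{\frac{1}{2}}(\Gamma)/\C$, and hence of $r\mapsto\Pp_{r}M_{\kappa}^{r}\Pp_{r}^{-1}$ as claimed.

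The compactness statement follows from the smoothing property preserved under differentiation: one notices that in the Helmholtz representation the datum $\jj\in\TT\HH^{-\frac{1}{2}}(\Div_{\Gamma},\Gamma)$ enters only through $p\in H^{\frac{3}{2}}(\Gamma)/\C$ and $q\in H^{\frac{1}{2}}(\Gamma)/\C$ (and via $\Delta_{\Gamma}p\in H^{-\frac{1}{2}}(\Gamma)$). The single layer and double layer scalar kernels appearing in $P',Q'$ each raise regularity by one unit, and $(\mathcal{L}^{*})^{-1}$ raises regularity by two units. Following the balance of regularities exactly as in the $r=0$ case, one sees that $P'(r)$ lies in $H^{\frac{5}{2}}(\Gamma)/\C$ and $Q'(r)$ in $H^{\frac{3}{2}}(\Gamma)/\C$, which upon differentiation with respect to $r$ is preserved by the smoothness of the surface operators of Section~\ref{SurfDiffOp}. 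This delivers $d^{m}M_{\kappa}[0,\xi_{1},\dots,\xi_{m}]\jj\in\TT\HH^{\frac{1}{2}}(\Div_{\Gamma},\Gamma)$, and compactness in $\TT\HH^{-\frac{1}{2}}(\Div_{\Gamma},\Gamma)$ follows from the compact embedding $\TT\HH^{\frac{1}{2}}(\Div_{\Gamma},\Gamma)\hookrightarrow\TT\HH^{-\frac{1}{2}}(\Div_{\Gamma},\Gamma)$.

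The main obstacle, as foreshadowed in Section~\ref{Helmholtzdec}, is the one we have deliberately circumvented by introducing $\Pp_{r}$ instead of $\tau_{r}$: without the Helmholtz pullback, the operator $\tau_{r}M_{\kappa}^{r}\tau_{r}^{-1}$ fails to preserve the class $-1$ structure of the double layer kernel when differentiated, because the derivative hits the factor $\nn_{r}\cdot\jj$ and ruins tangentiality. The representation of $M^{r}_{\kappa}$ through $P'(r)$ and $Q'(r)$ only involves either integrals tested against tangential vectors produced by $\mathcal{G}(r)p+\boldsymbol{\mathcal{R}}(r)q$, or scalar normal components multiplied by surface-smooth factors with a subsequent application of $(\mathcal{L}^{*}(r))^{-1}$; these structural features survive differentiation and this is precisely where the Helmholtz decomposition does its work. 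Once that structural observation is verified for each of the four terms that form $P'(r)$ and $Q'(r)$, the chain rule closes the argument.
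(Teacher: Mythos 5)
Your overall architecture --- differentiate the Helmholtz representation $\Pp_{r}M_{\kappa}^{r}\Pp_{r}^{-1}\jj=\nabla_{\Gamma}P'(r)+\Rot_{\Gamma}Q'(r)$ by the chain rule, using the building blocks of Section~\ref{SurfDiffOp}, Lemma~\ref{delta-1} and the pseudo-homogeneous kernel results of Part I --- is the same as the paper's, and your regularity bookkeeping for the compactness claim is sound. The gap is concentrated in your item (vi), which is precisely the one step to which the paper devotes its proof. The operator $D_{\kappa}^{r}$ carries the genuine double layer kernel $\nabla^{x}G_{a}(\kappa,|x-y|)\cdot\nn_{r}(x)$, of class $-1$ on a smooth surface, and is covered by \cite[Example 4.10]{CostabelLeLouer}. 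But $B_{\kappa}^{r}$ has the kernel $\nabla^{x}G_{a}(\kappa,|x-y|)\,\bigl(\nn_{r}(x)\cdot\jj_{r}(y)\bigr)$, whose first factor is pseudo-homogeneous of class $0$, not $-1$; the gain of one class at fixed $r$ comes \emph{only} from the cancellation of $\nn_{r}(x)\cdot\jj_{r}(y)$ as $y\to x$ when $\jj_{r}$ is tangential at $y$. Your parenthetical justification (that these kernels ``arise always multiplied by the smooth factor $J_{r}\tau_{r}\nn_{r}$'') is not a valid reason: multiplying a kernel by a smooth factor does not improve its class.

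More importantly, it is not automatic that this cancellation survives G\^ateaux differentiation in $r$. Differentiating the product $(\tau_{r}\nn_{r})(x)\cdot\bigl(\tau_{r}\nabla_{\Gamma_{r}}\tau_{r}^{-1}p\bigr)(y)$ produces cross terms in which the derivative of the normal at $x$ is paired with the undifferentiated tangent field at $y$ and vice versa, and each such term must separately be shown to vanish on the diagonal at the required rate. You acknowledge that the ``structural observation'' must be ``verified for each of the four terms'', but you never verify it --- and this verification is the entire content of the paper's proof. The paper closes the hole by exploiting the orthogonality of $\nn_{r}(y)$ to the tangent plane at $y$ to rewrite the factor as $\bigl((\tau_{r}\nn_{r})(x)-(\tau_{r}\nn_{r})(y)\bigr)\cdot\bigl(\tau_{r}\nabla_{\Gamma_{r}}\tau_{r}^{-1}p\bigr)(y)$; the difference of normals and all of its $r$-derivatives (computed from Lemmas~\ref{N} and~\ref{nabla}) are smooth in $(x,y)$ and vanish on the diagonal, so every $r$-derivative of the kernel of $\tau_{r}B_{\kappa}^{r}\Pp_{r}^{-1}$ remains pseudo-homogeneous of class $-1$ and the results of \cite{CostabelLeLouer} apply. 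Without this short but essential argument, your proof of both the differentiability in the stated operator norm and the compactness of the derivatives is incomplete.
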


\begin{proof} 
The differentiability of the double layer boundary integral operator is established in \cite[Example 4.10]{CostabelLeLouer}.  It remains to prove the infinite G\^ateaux differentiability of the mapping
$$
\begin{array}{lcl}B_{\delta}&\rightarrow& \mathscr{L}\left(\TT\HH^{-\frac{1}{2}}(\Div_{\Gamma},\Gamma),\HH^{\frac{1}{2}}(\Gamma)\right)\\r&\mapsto&\tau_{r}B^r_{\kappa}\Pp_{r}^{-1}.\end{array}
$$
The function $(x,y-x)\mapsto \nabla G(\kappa,|x-y|)$ is pseudo-homogeneous of class 0. We then have to prove that for any fixed $(x,y)\in(\Gamma\times\Gamma)^*$ and any function $p\in H^{\frac{3}{2}}(\Gamma)$ the G\^ateaux derivatives of
$$
r\mapsto (\tau_{r}\nn_{r})(x)\cdot\left(\tau_{r}\nabla_{\Gamma_{r}}\tau_{r}^{-1}p\right)(y)
$$ behave as $|x-y|^2$ when $x-y$ tends to zero. To do so, either we write 
$$
(\tau_{r}\nn_{r})(x)\cdot\left(\tau_{r}\nabla_{\Gamma_{r}}\tau_{r}^{-1}p\right)(y)=\left((\tau_{r}\nn_{r})(x)-(\tau_{r}\nn_{r})(y)\right)\cdot\left(\tau_{r}\nabla_{\Gamma_{r}}\tau_{r}^{-1}p\right)(y)
$$
or we use Lemmas \ref{N} and \ref{nabla} and straighforward computations.
\end{proof}

\begin{theorem} \label{shapederiv} 
Assume that $\EE^{inc}\in\HH^1_{\loc}(\Rot,\R^3)$ and that the mappings
$$
\begin{array}{rcl}B^{\infty}(0,\varepsilon)&\rightarrow&\TT\HH^{-\frac{1}{2}}(\Div_{\Gamma},\Gamma)\\
 r&\mapsto&\Pp_{r}\left(\nn_{r}\wedge\EE^{inc}_{|\Gamma_{r}}\right)\\
 r&\mapsto& \Pp_{r}\left(\nn_{r}\wedge\left(\Rot\EE^{inc}\right)_{|{\Gamma_{r}}}\right)\end{array}
$$ 
are G\^ateaux differentiable at $r=0$. Then the mapping from $r\in B^{\infty}(0,\varepsilon)$ to the solution $\mathscr{E}(r)=\EE(\Omega_{r})\in\HH(\Rot,\Omega_{r})\cup\HH_{\loc}(\Rot,\overline{\Omega^c})$ of the scattering problem by the obstacle $\Omega_{r}$ is G\^ateaux differentiable at $r=0$.
\end{theorem}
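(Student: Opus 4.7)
The strategy is to pull everything back to the fixed reference space $\TT\HH^{-\frac12}(\Div_{\Gamma},\Gamma)$ via the Helmholtz-decomposition isomorphism $\Pp_{r}$, so that the solution map becomes a composition of Gâteaux-differentiable mappings whose derivatives have already been obtained in the theorems preceding this one. Set $\widetilde{\jj}_{r}=\Pp_{r}\jj_{r}$; the integral equation \eqref{SS} for $\jj_{r}$ is then equivalent to
$$
  \widetilde{\SS}(r)\,\widetilde{\jj}_{r} \;=\; \Pp_{r}\Bigl[-\rho\bigl(-\tfrac12\Id+M_{\kappa_{i}}^{r}\bigr)\gamma_{D}^{r}\EE^{inc}+C_{\kappa_{i}}^{r}\gamma_{N_{\kappa_{e}}}^{r}\EE^{inc}\Bigr],
$$
where $\widetilde{\SS}(r)=\Pp_{r}\SS^{r}\Pp_{r}^{-1}$ is, by the three preceding theorems on $\Pp_{r}M_{\kappa}^{r}\Pp_{r}^{-1}$ and $\Pp_{r}C_{\kappa}^{r}\Pp_{r}^{-1}$, infinitely Gâteaux differentiable as a mapping from $B^{\infty}(0,\varepsilon)$ into $\mathscr{L}(\TT\HH^{-\frac12}(\Div_{\Gamma},\Gamma))$.

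The first step is then to show that $r\mapsto\widetilde{\SS}(r)^{-1}$ is Gâteaux differentiable at $r=0$. By Theorem~\ref{thT}, $\widetilde{\SS}(0)=\SS$ is an isomorphism of $\TT\HH^{-\frac12}(\Div_{\Gamma},\Gamma)$; continuity of $r\mapsto\widetilde{\SS}(r)$, which is a consequence of its Gâteaux differentiability, yields invertibility in a possibly smaller ball $B^{\infty}(0,\varepsilon')$, and Lemma~\ref{d-1} then gives the Gâteaux differentiability of the inverse. The right-hand side of the integral equation is differentiable at $r=0$ by hypothesis combined with the infinite differentiability of $\Pp_{r}M_{\kappa_{i}}^{r}\Pp_{r}^{-1}$ and $\Pp_{r}C_{\kappa_{i}}^{r}\Pp_{r}^{-1}$, once we rewrite
$$
  \Pp_{r}\bigl(-\tfrac12\Id+M_{\kappa_{i}}^{r}\bigr)\gamma_{D}^{r}\EE^{inc} = \bigl(-\tfrac12\Id+\Pp_{r}M_{\kappa_{i}}^{r}\Pp_{r}^{-1}\bigr)\Pp_{r}\gamma_{D}^{r}\EE^{inc},
$$
and similarly for the $C_{\kappa_{i}}^{r}$ term. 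The composition rule in Fréchet spaces therefore shows that $r\mapsto\widetilde{\jj}_{r}=\widetilde{\SS}(r)^{-1}[\text{R.H.S.}]$ is Gâteaux differentiable from $B^{\infty}(0,\varepsilon')$ into $\TT\HH^{-\frac12}(\Div_{\Gamma},\Gamma)$.

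For the scattered field on any compact subset $K_{p}$ of $\R^{3}\setminus\overline{\Omega}$, the representation \eqref{u2c} becomes
$$
  \mathscr{E}^{s}(r) = \bigl(-\Psi_{E_{\kappa_{e}}}(r)-i\eta\,\Psi_{M_{\kappa_{e}}}(r)\bigl[\Pp_{r}C_{0}^{*,r}\Pp_{r}^{-1}\bigr]\bigr)\widetilde{\jj}_{r}\quad\text{in }K_{p},
$$
and Theorem~\ref{thpsi} together with the previously established differentiability of $\Pp_{r}C_{0}^{*,r}\Pp_{r}^{-1}$ (which follows from that of $\Pp_{r}C_{\kappa}^{r}\Pp_{r}^{-1}$ by setting $\kappa=0$) yields the desired Gâteaux differentiability of $r\mapsto\mathscr{E}^{s}(r)\in\HH(\Rot,K_{p})$. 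The interior field is treated analogously by differentiating \eqref{u12} after rewriting the trace operators through $\Pp_{r}$. Finally, to pass from $\HH(\Rot,K_{p})$ to $\HH(\Rot,\Omega_{r})\cup\HH_{\loc}(\Rot,\overline{\Omega_{r}^{c}})$, one uses the extendability of the derivatives of the potentials to $\TT\HH^{\frac12}(\Div_{\Gamma},\Gamma)\to\HH(\Rot,\Omega)\cup\HH_{\loc}(\Rot,\overline{\Omega^{c}})$ stated in Theorem~\ref{thpsi}; the hypothesis $\EE^{inc}\in\HH^{1}_{\loc}(\Rot,\R^{3})$ provides the extra half-order of regularity of the data needed to place $\widetilde{\jj}_{0}$ and its derivative in $\TT\HH^{\frac12}(\Div_{\Gamma},\Gamma)$.

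The main obstacle is precisely this tracking of regularity across the Helmholtz factorization: the derivatives of $\Psi_{E_{\kappa}}(r)$ and $\Psi_{M_{\kappa}}(r)$ lose one order of boundary Sobolev regularity up to the interface, so one must show that the density $\widetilde{\jj}_{r}$ actually lives in the better space $\TT\HH^{\frac12}(\Div_{\Gamma},\Gamma)$ at $r=0$, together with its Gâteaux derivative. This is where the smoothness assumption on the incident field and the elliptic regularity of $\SS^{-1}$ on scales of Sobolev spaces of tangential fields enter decisively; everything else is a mechanical application of the chain rule, the product rule, and Lemma~\ref{d-1}.
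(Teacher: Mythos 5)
Your plan follows the paper's proof essentially verbatim: differentiate the pulled-back integral equation and the representation formulas \eqref{u2c}, \eqref{u12} via the chain and product rules together with Lemma~\ref{d-1} for $\SS(r)^{-1}$, and use the extra regularity $\jj\in\TT\HH^{\frac12}(\Div_{\Gamma},\Gamma)$ (coming from $\EE^{inc}\in\HH^1_{\loc}(\Rot,\R^3)$) to absorb the regularity loss of the differentiated potentials near the boundary. One small correction to your closing remark: only the density $\jj$ itself, not its G\^ateaux derivative, needs to lie in $\TT\HH^{\frac12}(\Div_{\Gamma},\Gamma)$ --- the product rule places $d\jj[0,\xi]$ only under the \emph{undifferentiated} potentials $\Psi_{E_{\kappa_e}}$, $\Psi_{M_{\kappa_e}}$, which are bounded on the energy space $\TT\HH^{-\frac12}(\Div_{\Gamma},\Gamma)$; indeed the hypotheses on the data maps only provide differentiability into $\TT\HH^{-\frac12}(\Div_{\Gamma},\Gamma)$, so the extra half order for the derivative of the density is neither available nor required.
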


\begin{proof} 
We use the integral equation method described in Theorem~\ref{thT}. 
Let $\jj$ be the solution of the integral equation \eqref{SS}. 
By composition of infinitely differentiable mappings we see that 
$$
\begin{array}{lcl}B^{\infty}(0,\varepsilon)&\rightarrow &\mathscr{L}\left(\TT\HH^{-\frac{1}{2}}(\Div_{\Gamma},\Gamma),\TT\HH^{-\frac{1}{2}}(\Div_{\Gamma},\Gamma)\right)\\r&\mapsto&\SS(r)=\Pp_{r}\SS^r\Pp_{r}^{-1}\end{array}
$$
is $\mathscr{C}^{\infty}$-G\^ateaux differentiable. Then with \eqref{u2c} and \eqref{SS} we get for the exterior field $\mathscr{E}^s$
\begin{equation*}
 \begin{split}
 d \mathscr{E}^s[0,\xi]=\;&\left(-d\Psi_{E_{\kappa_{e}}}[0,\xi]-i\eta d\Psi_{M_{\kappa_{e}}}[0,\xi]C^{*}_{0}-i\eta\Psi_{M_{\kappa_{e}}}d C^{*}_{0}[0,\xi]\right)\jj
 \\
 &+(-\Psi_{E_{\kappa_{e}}}-i\eta\Psi_{M_{\kappa_{e}}}C^*_{0})\SS^{-1}\big(-d \SS[0,\xi]\,\jj\big)
 \\
 +(-&\Psi_{E_{\kappa_{e}}}\!\!\!-i\eta\Psi_{M_{\kappa_{e}}}C^*_{0})\SS^{-1}\left(-\rho d M_{\kappa_{i}}[0,\xi]\gamma_{D}\EE^{inc}- d C_{\kappa_{i}}[0,\xi]\gamma_{N_{\kappa_{e}}}\EE^{inc}\right)
 \\
 &\!\!\!+(-\Psi_{E_{\kappa_{e}}}-i\eta\Psi_{M_{\kappa_{e}}}C^*_{0})\SS^{-1}\left(-\rho\Big(\tfrac{1}{2}+M_{\kappa_{i}}\right)d\left\{ \Pp_{r}\gamma_{D}^{r}\EE^{inc}\right\}[0,\xi]\Big)
 \\
 &+(-\Psi_{E_{\kappa_{e}}}-i\eta\Psi_{M_{\kappa_{e}}}C^*_{0})\SS^{-1}\left(-C_{\kappa_{i}}d\left\{ \Pp_{r}\gamma_{N_{\kappa_{e}}}^{r}\EE^{inc}\right\}[0,\xi]\right).
 \end{split}
\end{equation*}
We know that $\jj\in\TT\HH^{\frac{1}{2}}(\Div_{\Gamma},\Gamma)$, so that the first terms on the right-hand side are in $\HH_{\loc}(\Rot,\overline{\Omega^c})$, and the hypotheses guarantee that the last two terms are in $\HH_{\loc}(\Rot,\overline{\Omega^c})$.
For the interior field we write 
\begin{equation*}
\begin{split}
 d \mathscr{E}^{i}[0,\xi]=\;&-\frac{1}{\rho}d \Psi_{E_{\kappa_{i}}}[0,\xi]\gamma_{N_{\kappa_{e}}}^c\left(\EE^{s}+\EE^{inc}\right)-d\Psi_{M_{\kappa_{i}}}[0,\xi]\gamma_{D}^c\left(\EE^{s}+\EE^{inc}\right)
 \\
  &-\frac{1}{\rho}\Psi_{E_{\kappa_{i}}}d\hspace{-.5mm}\left\{\Pp_{r}\gamma_{N_{\kappa_{e}}}^{c,r}\left(\mathscr{E}^s(r)+\EE^{inc}\right)\right\}[0,\xi]\\
  &\qquad\qquad-\Psi_{M_{\kappa_{i}}}d\hspace{-.5mm}\left\{\Pp_{r}\gamma_{D}^{c,r}\left(\mathscr{E}^s(r)+\EE^{inc}\right)\right\}\hspace{-1mm}[0,\xi].
 \end{split}
\end{equation*}
The hypotheses guarantee that $\gamma_{N_{\kappa_{e}}}^c\left(\EE^{s}+\EE^{inc}\right)$ and $\gamma_{D}^c\left(\EE^{s}+\EE^{inc}\right)$ are in $\TT\HH^{\frac{1}{2}}(\Div_{\Gamma},\Gamma)$, which implies that the first two terms are in $\HH(\Rot,\Omega)$, and  that the last two terms are in $\HH(\Rot,\Omega)$.
\end{proof}

\begin{theorem}
 The mapping sending $r\in B^{\infty}(0,\varepsilon)$ to  the far field pattern $\EE^{\infty}(\Omega_{r})\in\TT\mathscr{C}^{\infty}(S^2)$ of the solution to the scattering problem by the obstacle $\Omega_{r}$ is $\mathscr{C}^{\infty}$-G\^ateaux differentiable.
\end{theorem}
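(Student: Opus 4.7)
The plan is to realize $\mathscr{E}^{\infty}(r)$ as a composition of $\mathscr{C}^{\infty}$-G\^ateaux differentiable mappings whose target space is the \emph{fixed} space $\TT\mathscr{C}^{\infty}(S^2)$. Following the strategy of Section~\ref{Helmholtzdec}, I would insert $\Pp_{r}^{-1}\Pp_{r}=\Id$ between the factors in the far-field representation and write
\begin{equation*}
\mathscr{E}^{\infty}(r) = \bigl(-\Psi^{\infty,r}_{E_{\kappa_{e}}}\Pp_{r}^{-1} - i\eta\,\Psi^{\infty,r}_{M_{\kappa_{e}}}\Pp_{r}^{-1}\,\Pp_{r}C_{0}^{*,r}\Pp_{r}^{-1}\bigr)\,\Pp_{r}\jj_{r}.
\end{equation*}
This reduces the claim to the $\mathscr{C}^{\infty}$-G\^ateaux differentiability of three ingredients: the far-field pullback operators $r\mapsto\Psi^{\infty,r}_{E_{\kappa_{e}}}\Pp_{r}^{-1}$ and $r\mapsto\Psi^{\infty,r}_{M_{\kappa_{e}}}\Pp_{r}^{-1}$; the boundary operator $r\mapsto\Pp_{r}C_{0}^{*,r}\Pp_{r}^{-1}$; and the density $r\mapsto\Pp_{r}\jj_{r}$.

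For the density, I would argue exactly as in the proof of Theorem~\ref{shapederiv}: the operator $\SS(r)=\Pp_{r}\SS^{r}\Pp_{r}^{-1}$ is $\mathscr{C}^{\infty}$-G\^ateaux differentiable on $\TT\HH^{-\frac{1}{2}}(\Div_{\Gamma},\Gamma)$ and invertible at $r=0$, so by Lemma~\ref{d-1} its inverse is $\mathscr{C}^{\infty}$-G\^ateaux differentiable in a neighbourhood of $0$. Since $r\mapsto\Pp_{r}(-\rho(-\tfrac12\Id+M^r_{\kappa_{i}})\gamma_D^r\EE^{inc}+C^r_{\kappa_{i}}\gamma^r_{N_{\kappa_{e}}}\EE^{inc})$ is also $\mathscr{C}^{\infty}$-G\^ateaux differentiable (by the theorems on $\Pp_{r}C^{r}_{\kappa}\Pp_{r}^{-1}$ and $\Pp_{r}M^{r}_{\kappa}\Pp_{r}^{-1}$ proved above, and because the right-hand side here is smooth in $r$), the product $\Pp_{r}\jj_{r}$ is $\mathscr{C}^{\infty}$-G\^ateaux differentiable with values in $\TT\HH^{-\frac{1}{2}}(\Div_{\Gamma},\Gamma)$. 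The factor $\Pp_{r}C_{0}^{*,r}\Pp_{r}^{-1}$ is handled by the argument already used for $\Pp_{r}C_{\kappa}^{r}\Pp_{r}^{-1}$.

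The crucial and distinguishing step is the analysis of the far-field factors. After Helmholtz decomposition and pullback, the relevant kernel appearing in $\Psi^{\infty,r}_{E_{\kappa_{e}}}\Pp_{r}^{-1}$ and $\Psi^{\infty,r}_{M_{\kappa_{e}}}\Pp_{r}^{-1}$ has the form
\begin{equation*}
(\hat x,y,r)\mapsto e^{-i\kappa_{e}\hat x\cdot(y+r(y))},\qquad \hat x\in S^{2},\ y\in\Gamma,
\end{equation*}
which is jointly analytic in $\hat x,y$ and smooth in $r$: in contrast with the potentials $\Psi_{E_{\kappa}}^{r}$ and $\Psi_{M_{\kappa}}^{r}$, there is no singularity, because $\hat x$ lies on the sphere at infinity while the deformation has compact support. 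Differentiation under the integral sign is therefore legal at any order, and combined with the $\mathscr{C}^{\infty}$-differentiability of the surface operators $\mathcal{G}$, $\boldsymbol{\mathcal{R}}$, $\mathcal{J}$, $\mathcal{N}$ established in Section~\ref{SurfDiffOp}, it gives $\mathscr{C}^{\infty}$-G\^ateaux differentiability of the two far-field pullbacks as mappings into $\mathscr{L}(\TT\HH^{-\frac{1}{2}}(\Div_{\Gamma},\Gamma),\TT\mathscr{C}^{\infty}(S^2))$. The chain rule applied to the decomposition above then yields the theorem.

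The main obstacle I expect is not the act of differentiation — the kernel being smooth does most of the work — but the bookkeeping needed to verify that the target space $\TT\mathscr{C}^{\infty}(S^2)$ is genuinely preserved at every order, i.e.\ that all iterated G\^ateaux derivatives still produce tangential $\mathscr{C}^{\infty}$ vector fields on $S^{2}$. Tangentiality is secured by the outer bracket $\hat x\wedge(\cdot)\wedge\hat x$ (resp.\ $\hat x\wedge(\cdot)$) in \eqref{FF}, which survives each differentiation, and $\mathscr{C}^{\infty}$-regularity in $\hat x$ follows because each $r$-derivative produces only polynomial factors in $\hat x$. This reflects the final remark of the introduction: for the far field, the Lagrangian and Eulerian derivatives coincide, and no loss of regularity occurs, in contrast with Theorem~\ref{shapederiv}.
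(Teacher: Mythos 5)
Your proposal is correct and follows essentially the same route as the paper: observe that the far-field kernel $e^{-i\kappa\hat{x}\cdot(y+r(y))}$ is smooth and jointly $\mathscr{C}^{\infty}$-G\^ateaux differentiable in $r$, differentiate under the integral sign to get smooth differentiability of the boundary-to-far-field operators without loss of regularity, and conclude via the chain rule using the integral representation and the previously established differentiability of the density. The only cosmetic difference is that the paper pulls back the far-field operators with the plain change of variables $\tau_{r}^{-1}$ rather than $\Pp_{r}^{-1}$, which is immaterial here precisely because the kernel is smooth.
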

\begin{proof}
The mapping $B^{\infty}(0,\varepsilon)\ni r\mapsto\left\{(\hat{x},y)\mapsto e^{i\kappa\hat{x}\cdot(y+r(y))}\right\}\in \mathscr{C}^{\infty}(S^2\times\Gamma)$ is $\mathscr{C}^{\infty}$-G\^ateaux differentiable and  the derivatives  define smooth kernels. By the linearity of the integral we deduce that the boundary--to--far--field operators 
$$
\begin{array}{lcl}
B_{\delta}&\rightarrow&\mathscr{L}(\TT\HH^s(\Div_{\Gamma},\Gamma),\mathscr{C}^{\infty}(S^2))
\\
r&\mapsto&\Psi^{\infty}_{E_{\kappa}}(r)=\Psi^{\infty,r}_{E_{\kappa}}\tau_{r}^{-1}
\\
r&\mapsto&\Psi^{\infty}_{M_{\kappa}}(r)=\Psi_{M_{\kappa}}^{\infty,r}\tau_{r}^{-1}
\end{array}
$$ 
are $\mathscr{C}^{\infty}$-G\^ateaux differentiable.  For  $\jj\in~\TT\HH^s(\Div_{\Gamma},\Gamma)$ we have:
$$
d\Psi_{E_{\kappa}}^{\infty}[0,\xi]\jj(\hat{x})=i\kappa\hat{x}\wedge\left(\int_{\Gamma}e^{-i\kappa\hat{x}\cdot y}\big(\Div_{\Gamma}\xi(y)-i\kappa\hat{x}\cdot\xi(y)\big)\jj(y)ds(y)\right)\wedge\hat{x},
$$
and
$$
d\Psi_{M_{\kappa}}^{\infty}[0,\xi]\jj(\hat{x})=\kappa\hat{x}\wedge\left(\int_{\Gamma}e^{-i\kappa\hat{x}\cdot y}\big(\Div_{\Gamma}\xi(y)-i\kappa\hat{x}\cdot\xi(y)\big)\jj(y)ds(y)\right).
$$
We conclude by using the integral representation of $\EE^{\infty}(\Omega_{r})$ and previous theorems.
 \end{proof}
\subsection{Characterization of the first derivative}

The following theorem gives a  caracterization of the first G\^ateaux derivative  of $r\mapsto\mathscr{E}(r)$ at $r=0$.

\begin{theorem}
\label{T:firstder}
Under the hypotheses of Theorem {\rm \ref{shapederiv}}, the first derivative of the solution $\mathscr{E}(r)$ of the dielectric scattering problem at $r=0$ in the direction $\xi\in\mathscr{C}^{\infty}(\Gamma,\R^3)$  solves the following transmission problem :
\begin{equation}
\left\{\begin{aligned}
\Rot\Rot d \mathscr{E}^{i}[0,\xi]-\kappa_{i}^2 d\mathscr{E}^{i}[0,\xi]=0
\\
\Rot\Rot d \mathscr{E}^{s}[0,\xi]-\kappa_{e}^2 d\mathscr{E}^{s}[0,\xi]=0
\end{aligned}\right.
\end{equation} 
with the interface conditions 
\begin{equation}
\label{E:firstder}
\left\{\begin{aligned}
\nn\wedge d\mathscr{E}^{i}[0,\xi]-\nn\wedge d\mathscr{E}^{s}[0,\xi]=g_{D}
\\
\mu_{i}^{-1}\nn\wedge\Rot d\mathscr{E}^{i}[0,\xi]-\mu_{e}^{-1}\nn\wedge\Rot d\mathscr{E}^{s}[0,\xi]=g_{N},
\end{aligned}\right.
\end{equation}
where with the solution $(\EE^{i},\EE^{s})$ of the scattering problem, 
\begin{equation*}
\begin{split}
g_{D}=&-\left(\xi\cdot\nn\right)\Big(\nn\wedge\Rot\EE^{i}-\nn\wedge\Rot(\EE^s+\EE^{inc})\Big)\wedge\nn
\\
&+\Rot_{\Gamma}\Big((\xi\cdot\nn)\big(\nn\cdot\EE^{i}-\nn\cdot(\EE^s+\EE^{inc})\big)\Big),
\end{split}
\end{equation*}
 and 
\begin{equation*}
\begin{split}
g_{N}=&-\left(\xi\cdot\nn\right)\left(\dfrac{\kappa_{i}^2}{\mu_{i}}\nn\wedge\EE^{i}-\dfrac{\kappa_{e}^2}{\mu_{e}}\nn\wedge(\EE^s+\EE^{inc})\right)\wedge\nn
\\
&+\Rot_{\Gamma}\Big((\xi\cdot\nn)\;\left(\mu_{i}^{-1}\rot_{\Gamma}\EE^{i}-\mu_{e}^{-1}\rot_{\Gamma}(\EE^s+\EE^{inc})\right)\Big),
\end{split}
\end{equation*} 
and $d\mathscr{E}^{s}[0,\xi]$ satisfies the Silver-M\"uller radiation condition.
\end{theorem}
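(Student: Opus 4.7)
The plan is to prove the three assertions separately: the homogeneous Maxwell equations in $\Omega$ and $\Omega^c$ via the integral representation and Theorem \ref{thpsi}, the radiation condition for $d\mathscr{E}^s[0,\xi]$ likewise, and the transmission conditions \eqref{E:firstder} by direct differentiation of the identities $\nn_r\wedge\mathscr{E}^i(r)=\nn_r\wedge(\mathscr{E}^s(r)+\EE^{inc})$ and $\mu_i^{-1}\nn_r\wedge\Rot\mathscr{E}^i(r)=\mu_e^{-1}\nn_r\wedge\Rot(\mathscr{E}^s(r)+\EE^{inc})$ on the moving boundary $\Gamma_r$, as suggested in the introduction.

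For the first part, I would use the integral representations \eqref{u2c} and \eqref{u12} for $\mathscr{E}^s(r)$ and $\mathscr{E}^i(r)$. Theorem \ref{thpsi} shows that for a fixed density $\jj\in\TT\HH^{\frac{1}{2}}(\Div_{\Gamma},\Gamma)$, the derivatives $d\Psi_{E_\kappa}[0,\xi]\jj$ and $d\Psi_{M_\kappa}[0,\xi]\jj$ are in $\HH(\Rot,\Omega)\cup\HH_{\loc}(\Rot,\overline{\Omega^c})$ and satisfy the homogeneous Maxwell equations $\Rot\Rot\uu-\kappa^2\uu=0$ away from $\Gamma$, together with the Silver--M\"uller condition. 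Because all the densities appearing in the integral representation are themselves G\^ateaux-differentiable with values in the right spaces (by Theorem \ref{shapederiv} and the hypothesis on $\EE^{inc}$), differentiating term-by-term yields that both $d\mathscr{E}^i[0,\xi]$ and $d\mathscr{E}^s[0,\xi]$ satisfy the homogeneous Maxwell equations with the correct wave numbers $\kappa_i$, $\kappa_e$ in their respective domains, and that $d\mathscr{E}^s[0,\xi]$ satisfies the radiation condition.

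For the transmission conditions, I would pull the identities back to the fixed boundary $\Gamma$ via $\tau_r$ and differentiate at $r=0$. The key computational identity is the material--Eulerian relation: for a field $\VV(r)$ defined on a neighborhood of $\Gamma_r$ such that $\VV(r)\to\EE$ as $r\to 0$,
$$
\partial_r\!\left[\tau_r\bigl(\VV(r)_{|\Gamma_r}\bigr)\right]_{r=0} \;=\; d\VV[0,\xi]_{|\Gamma}+(\xi\cdot\nabla)\EE_{|\Gamma},
$$
with the traces taken from the appropriate side. Applied to the Dirichlet transmission condition, and using Lemma \ref{N} for $d(\tau_r\nn_r)[0,\xi]$, one obtains on $\Gamma$
$$
\nn\wedge\bigl(d\mathscr{E}^i[0,\xi]-d\mathscr{E}^s[0,\xi]\bigr) \;=\; d(\tau_r\nn_r)[0,\xi]\wedge\bigl(\EE^i-\EE^s-\EE^{inc}\bigr)+\nn\wedge\bigl(\xi\cdot\nabla\bigr)\bigl(\EE^s+\EE^{inc}-\EE^i\bigr).
$$
Then $\EE^i-\EE^s-\EE^{inc}$ on $\Gamma$ is purely normal, and the convective term is decomposed via $\xi=(\xi\cdot\nn)\nn+\xi_T$; the tangential piece $(\xi_T\cdot\nabla_\Gamma)$ applied to the tangential trace cancels in the jump, leaving only a $(\xi\cdot\nn)\partial_\nn$ contribution and a surface-derivative piece in $(\xi\cdot\nn)\nn\cdot(\EE^i-\EE^s-\EE^{inc})$. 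Expressing $\partial_\nn\EE$ on $\Gamma$ in terms of $\nn\wedge\Rot\EE$ and $\nabla_\Gamma(\EE\cdot\nn)$ via the standard identity $\partial_\nn\EE=\Rot\EE\wedge\nn+\nabla_\Gamma(\EE\cdot\nn)+(\text{tangential})$ and collecting terms produces precisely $g_D$. The Neumann condition is treated in the same way; the $(\xi\cdot\nn)\partial_\nn\Rot\EE$ contribution is rewritten using $\Rot\Rot\EE=\kappa^2\EE$ in each domain to produce the coefficients $\kappa_i^2/\mu_i$ and $\kappa_e^2/\mu_e$ in $g_N$, and the surface--curl term arises from the jump of $\mu^{-1}\rot_\Gamma\EE$. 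The main obstacle will be the careful bookkeeping of these tangential/normal vector identities on $\Gamma$ and the verification that all contributions indeed reassemble into the compact form of $g_D$ and $g_N$ announced in the statement; the regularity hypothesis $\EE^{inc}\in\HH^1_{\loc}(\Rot,\R^3)$ in Theorem \ref{shapederiv} is exactly what is needed so that all the normal derivative and $(\xi\cdot\nabla)$ traces appearing in these manipulations are well defined in $\TT\HH^{-\frac{1}{2}}(\Div_\Gamma,\Gamma)$.
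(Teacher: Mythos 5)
Your proposal follows essentially the same route as the paper: the Maxwell equations and the radiation condition come from Theorem \ref{thpsi} together with the integral representations, and the transmission conditions come from differentiating the pulled-back boundary identity via the material-derivative formula, Lemma \ref{N} for $d(\tau_r\nn_r)[0,\xi]$, the vanishing of the tangential jump, and the surface-curl expansion, with the Neumann condition obtained by repeating the computation for the magnetic field $\tfrac{1}{i\omega\mu}\Rot\EE$ and using $\Rot\Rot\EE=\kappa^2\EE$. Only note a sign slip in your displayed identity: the product rule gives $0=d(\tau_r\nn_r)[0,\xi]\wedge\bigl(\EE^{i}-\EE^{s}-\EE^{inc}\bigr)+\nn\wedge\bigl(d\mathscr{E}^{i}[0,\xi]-d\mathscr{E}^{s}[0,\xi]\bigr)+\nn\wedge\bigl(\xi\cdot\nabla\bigr)\bigl(\EE^{i}-\EE^{s}-\EE^{inc}\bigr)$, so after rearranging, the term $d(\tau_r\nn_r)[0,\xi]\wedge\bigl(\EE^{i}-\EE^{s}-\EE^{inc}\bigr)$ on your right-hand side should carry a minus sign.
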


\begin{proof}
 We have shown in the previous paragraph that the potential operators and their G\^ateaux derivatives satisfy the Maxwell equations and the Silver-M\"uller radiation condition. 
 It remains to compute the boundary conditions. They can be obtained from the integral representation, but this is rather tedious. A simpler way consists in deriving for a fixed $x\in\Gamma$ the expression 
\begin{equation}\label{bvder}
\nn_{r}(x+r(x))\wedge\left(\mathscr{E}^i(r)(x+r(x))-\mathscr{E}^s(r)(x+r(x))-\EE^{inc}(x+r(x))\right)=0.
\end{equation} 
This gives
$$
\begin{aligned}
0=\;&d\mathcal{N}[0,\xi](x)\wedge\left(\EE^{i}(x)-\EE^s(x)-\EE^{inc}(x)\right)
\\
&+\nn(x)\wedge\left(d\mathscr{E}^i[0,\xi](x)-d\mathscr{E}^s[0,\xi](x)\right)
\\
&+\nn\wedge\left(\xi(x)\cdot\nabla\left(\EE^{i}-\EE^s-\EE^{inc}\right)\right).
\end{aligned}
$$
We now use the explicit form of the shape derivatives of the normal vector given in Lemma~\ref{N} : 
$d\mathcal{N}[0,\xi]=-\left[\nabla_{\Gamma}\xi\right]\nn$, and the formula
$
\nabla u=\nabla_{\Gamma}u+\left(\frac{\partial u}{\partial \nn}\right)\nn.
$
We obtain
\begin{multline*}
\nn(x)\wedge\left(d\mathscr{E}^i[0,\xi](x)-d\mathscr{E}^s[0,\xi](x)\right)=\\
\left[\nabla_{\Gamma}\xi\right]\nn\wedge\left(\EE^{i}(x)-\EE^s(x)-\EE^{inc}(x)\right)
\\
-\nn\wedge\left(\xi(x)\cdot\nabla_{\Gamma}\left(\EE^{i}(x)-\EE^s(x)-\EE^{inc}(x)\right)\right)
\\
-(\xi\cdot\nn)\nn\wedge\dfrac{\partial}{\partial\nn}\left(\EE^{i}(x)-\EE^s(x)-\EE^{inc}(x)\right).
\end{multline*}
 Since the tangential component of $\EE^{i}-\EE^s-\EE^{inc}$ vanishes, we have
\begin{multline*}
\left(\xi(x)\cdot\nabla_{\Gamma}\left(\EE^{i}(x)-\EE^s(x)-\EE^{inc}(x)\right)\right)\\
=\left(\nn\cdot\left(\EE^{i}(x)-\EE^s(x)-\EE^{inc}(x)\right)\right)\left(\left[\transposee{\nabla_{\Gamma}\nn}\right]\xi\right)
\end{multline*}
and 
\begin{multline*}
\left(\left[\nabla_{\Gamma}\xi\right]\nn\right)\wedge\left(\EE^{i}(x)-\EE^s(x)-\EE^{inc}(x)\right)\\
=\left(\left[\nabla_{\Gamma}\xi\right]\nn\right)\wedge\nn\left(\EE^{i}(x)-\EE^s(x)-\EE^{inc}(x)\right)\cdot\nn.
\end{multline*}
Since we are on a regular surface, we have $\nabla_{\Gamma}\nn=\transposee{\nabla_{\Gamma}\nn}$ and   
$$
 \left(\left[\nabla_{\Gamma}\xi\right]\nn\right)\wedge\nn-\nn\wedge\left(\left[\transposee{\nabla_{\Gamma}\nn}\right]\xi\right)=\Rot_{\Gamma}\left(\xi\cdot\nn\right).
$$
Using the expansion (see \cite[p.\ 75]{Nedelec})
$$
\Rot\uu=\left(\rot_{\Gamma}\uu\right)\nn+\Rot_{\Gamma}\left(\uu_{\Gamma}\cdot\nn\right)-\left([\nabla_{\Gamma}\nn]\uu\right)\wedge\nn-\left(\frac{\partial\uu}{\partial\nn}\right)\wedge\nn
$$ 
we obtain that
$$
\begin{aligned}
-\nn\wedge\left(\gamma_{n}\EE^{i}-\gamma_{n}^c\big(\EE^s+\EE^{inc}\big)\right)&=-\nn\wedge\left(\gamma\Rot\EE^{i}-\gamma^c\Rot\big(\EE^s+\EE^{inc}\big)\right)\wedge\nn
\\
&\qquad+\Rot_{\Gamma}\left(\nn\cdot\left(\gamma\EE^{i}-\gamma^c\big(\EE^s+\EE^{inc}\big)\right)\right).
\end{aligned}
$$
Here we used that the curvature operator $[\nabla_{\Gamma}\nn]$ acts on the tangential component of vector fields, so that 
$$
[\nabla_{\Gamma}\nn]\left(\gamma\EE^{i}-\gamma^c\big(\EE^s+\EE^{inc}\big)\right)=0.
$$
Thus we have
\begin{equation*}
\begin{split}
g_{D}=&-\left(\xi\cdot\nn\right)\nn\wedge\left(\gamma\Rot\EE^{i}-\gamma^c\Rot\big(\EE^s+\EE^{inc}\big)\right)\wedge\nn
\\
&+\Rot_{\Gamma}\left((\xi\cdot\nn)\big(\nn\cdot\gamma\EE^{i}-\nn\cdot\gamma^c(\EE^s+\EE^{inc})\big)\right).
\end{split}
\end{equation*}
To obtain the second transmission condition, we use similar computations with the electric field $\EE$ replaced by the magnetic field $\dfrac{1}{i\omega\mu}\Rot\EE$.
\end{proof}

\section{Perspectives: Non-smooth boundaries}

We have presented a complete differentiability analysis of the electromagnetic integral operators with respect to smooth deformations of a smooth boundary  in the framework of Sobolev spaces. Using the boundary integral equation approach we have established that the far-field pattern of the  dielectric scattering problem is infinitely differentiable with respect to the deformations and we gave a characterization of the first derivative as the far-field pattern of a new transmission problem.

In the case of a non-smooth boundary --- a polyhedral or more generally a Lipschitz boundary --- the formulas determining the first derivative given in Theorem~\ref{T:firstder} are problematic. The normal vector field $\nn$ will have discontinuities, and the factor $\xi\cdot\nn$ and vector product with $\nn$ that appear in the right-hand side of \eqref{E:firstder} may not be well defined in the energy trace spaces. It is, however, known for the acoustic case that the far field is infinitely shape differentiable for non-smooth boundaries, too, see \cite{Hohage} for a proof via the implicit function theorem. 

Our procedure of using a boundary integral representation gives an alternative way of characterizing the shape derivatives of the solution of the dielectric scattering problem and of its far field. We do not require the computation of the  boundary traces of the solution and taking tangential derivatives and multiplication by possibly discontinuous factors. Instead we determine the shape derivatives of the boundary integral operators. While the study of G\^ateaux differentiability of boundary integral operators, for the case of smooth deformations of a Lipschitz domain, is still an open problem that will require further work, our approach via Helmholtz decompositions seems to be a promising starting point for tackling this question. Let us briefly indicate why we think this is so.

We consider the case where the boundary $\Gamma$ is merely Lipschitz, but the deformation is defined by a vector field $\xi$ that is smooth (at least $\mathscr{C}^1$) in a neighborhood of $\Gamma$. Note that the reduction to purely normal displacements that is often used for studying shape optimization problems for smooth boundaries does not make sense here, as soon as there are corners present. In this situation, many of the ingredients of our toolbox are still available. Here are some of them.

First, the change of variables mapping $\tau_{r}$ still defines an isomorphism between $H^{\frac{1}{2}}(\Gamma_{r})$ and $H^{\frac{1}{2}}(\Gamma)$. By duality, we see that the mapping
$u_{r}\mapsto J_{r}\tau_{r}u_{r}$
defines an isomorphism between $H^{-\frac{1}{2}}(\Gamma_{r})$ and $H^{-\frac{1}{2}}(\Gamma)$. When we want to transport the energy space $\TT\HH^{-\frac{1}{2}}(\Div_{\Gamma_{r}},\Gamma_{r})$, we can still use Helmholtz decomposition. Namely, the following result is known, see \cite{BuffaCiarlet,BuffaCiarlet2,BuffaCostabelSchwab,BuffaCostabelSheen,BuffaHiptmairPetersdorffSchwab}.

\begin{lemma} 
\label{HelmLip}
Assume that  $\Gamma$ is a simply connected closed Lipschitz surface.  The Hilbert space $\TT\HH_{\|}^{-\frac{1}{2}}(\Div_{\Gamma},\Gamma)$ admits the following Helmholtz decomposition:
$$ 
\TT\HH_{\|}^{-\frac{1}{2}}(\Div_{\Gamma},\Gamma)= 
 \nabla_{\Gamma}\mathcal{H}(\Gamma)\oplus 
 {\Rot}_{\Gamma}\,H^{\frac{1}{2}}(\Gamma).
$$
 where
$$
\mathcal{H}(\Gamma)=\{u\in H^1(\Gamma)\;:\;\Delta_{\Gamma}u\in H^{-\frac{1}{2}}(\Gamma)\}.
$$
\end{lemma}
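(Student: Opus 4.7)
I would mirror the smooth-case proof: extract a scalar potential from $\Div_{\Gamma}\jj$ via the Laplace--Beltrami operator, then recover a vector potential for the remaining divergence-free part by exploiting the simple connectedness of $\Gamma$. First, containment of the two summands in $\TT\HH^{-\frac{1}{2}}_{\|}(\Div_{\Gamma},\Gamma)$ is essentially built into the definitions: for $u\in\mathcal{H}(\Gamma)\subset H^{1}(\Gamma)$ one has $\nabla_{\Gamma}u\in\TT\LL^{2}(\Gamma)\hookrightarrow\TT\HH^{-\frac{1}{2}}_{\|}(\Gamma)$ and $\Div_{\Gamma}\nabla_{\Gamma}u=\Delta_{\Gamma}u\in H^{-\frac{1}{2}}(\Gamma)$ by the very definition of $\mathcal{H}(\Gamma)$; for $v\in H^{\frac{1}{2}}(\Gamma)$, $\Rot_{\Gamma}v\in\TT\HH^{-\frac{1}{2}}_{\|}(\Gamma)$ is defined by duality with $\rot_{\Gamma}$, and the identity $\Div_{\Gamma}\Rot_{\Gamma}v=0$ shows it lies in the target space. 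Both maps are continuous.

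\textbf{Directness and surjectivity.} Suppose $\nabla_{\Gamma}u=\Rot_{\Gamma}v$ with $u\in\mathcal{H}(\Gamma)/\C$ and $v\in H^{\frac{1}{2}}(\Gamma)/\C$. Applying $\Div_{\Gamma}$ gives $\Delta_{\Gamma}u=0$; since on a bounded Lipschitz surface $\Delta_{\Gamma}:H^{1}(\Gamma)/\C\to H^{-1}_{*}(\Gamma)$ is an isomorphism via Lax--Milgram and the Poincar\'e inequality, $u$ must be constant, and a symmetric computation using $\rot_{\Gamma}\Rot_{\Gamma}=-\Delta_{\Gamma}$ forces $v$ constant as well, so the intersection of the two summands is trivial. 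For surjectivity, given $\jj$ the data $\Div_{\Gamma}\jj\in H^{-\frac{1}{2}}(\Gamma)$ pairs to zero with constants, so by Lax--Milgram in $H^{1}(\Gamma)/\C$ the variational problem
\begin{equation*}
\int_{\Gamma}\nabla_{\Gamma}u\cdot\nabla_{\Gamma}\varphi\,ds=-\langle\Div_{\Gamma}\jj,\varphi\rangle\qquad\forall\,\varphi\in H^{1}(\Gamma)
\end{equation*}
has a unique solution $u$ satisfying $\Delta_{\Gamma}u=\Div_{\Gamma}\jj\in H^{-\frac{1}{2}}(\Gamma)$, hence $u\in\mathcal{H}(\Gamma)$. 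Setting $\jj_{0}:=\jj-\nabla_{\Gamma}u$, I would have $\jj_{0}\in\TT\HH^{-\frac{1}{2}}_{\|}(\Gamma)$ with $\Div_{\Gamma}\jj_{0}=0$, and by \eqref{eqd3} the rotated field $\nn\wedge\jj_{0}$ is ``closed'' in the sense that $\rot_{\Gamma}(\nn\wedge\jj_{0})=\Div_{\Gamma}\jj_{0}=0$. It then remains to produce $v\in H^{\frac{1}{2}}(\Gamma)$ with $\Rot_{\Gamma}v=\jj_{0}$.

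\textbf{Main obstacle.} The delicate point is this last step: a Poincar\'e lemma for the pair $(\nabla_{\Gamma},\rot_{\Gamma})$ at the low regularity $\TT\HH^{-\frac{1}{2}}_{\|}(\Gamma)$ on a merely Lipschitz simply connected closed surface. On a smooth $\Gamma$ one would simply solve $-\Delta_{\Gamma}v=\rot_{\Gamma}\jj_{0}$ with the required elliptic shift from $H^{-\frac{3}{2}}$ to $H^{\frac{1}{2}}$, but this shift is unavailable under Lipschitz regularity. I would circumvent the difficulty by lifting $\jj_{0}$ to a divergence-free vector field $\JJ\in\HH(\Div\,0;U)$ in a tubular neighborhood $U$ of $\Gamma$ (inverting a tangential trace), applying the classical three-dimensional vector-potential theorem on the simply connected Lipschitz solid $U$ to write $\JJ=\Rot\boldsymbol{\Phi}$ with $\boldsymbol{\Phi}\in\HH^{1}(U)$, and then setting $v:=(\boldsymbol{\Phi}\cdot\nn)|_{\Gamma}\in H^{\frac{1}{2}}(\Gamma)$; a Stokes-type identity on $\Gamma$ then identifies $\Rot_{\Gamma}v$ with $\jj_{0}$. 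Boundedness of each link in this lifting--potential--trace chain, together with the closed graph theorem, yields the topological direct sum claimed in the lemma.
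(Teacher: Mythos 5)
First, note that the paper does not prove this lemma at all: it is quoted as a known result with references to Buffa--Ciarlet, Buffa--Costabel--Schwab, Buffa--Costabel--Sheen and Buffa--Hiptmair--von~Petersdorff--Schwab, so there is no in-paper proof to compare against. Your overall architecture (peel off the gradient part by solving $\Delta_{\Gamma}u=\Div_{\Gamma}\jj$ via Lax--Milgram, then show that the divergence-free remainder is a surface curl) is indeed the standard skeleton, and you correctly identify the surface Poincar\'e lemma at regularity $-\tfrac12$ as the crux. However, your resolution of that crux contains genuine errors. (i) The topology is wrong: writing a divergence-free field as a curl requires the \emph{second} de~Rham cohomology of the domain to vanish (equivalently, zero flux through every boundary component), not simple connectedness. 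A tubular neighborhood $U$ of a closed surface $\Gamma\simeq S^{2}$ is a shell with two boundary components and $H^{2}(U;\R)\neq0$, so the vector-potential theorem does not apply on $U$ without verifying a flux condition that your construction does not control (the tangential trace $\jj_{0}$ says nothing about the normal component of the lift). (ii) The lifting step conflates the trace theories of $\HH(\Div)$ and $\HH(\Rot)$: a field in $\HH(\Div\,0;U)$ at this regularity has only a normal trace, whereas prescribing the tangential trace $\jj_{0}$ requires the surjectivity of $\gamma_{D}$ (or $\pi_{\tau}$) from $\HH(\Rot)$ onto $\TT\HH_{\|}^{-\frac{1}{2}}(\Div_{\Gamma},\Gamma)$ on a Lipschitz domain --- which is itself one of the deep theorems of the cited references and essentially of the same depth as the decomposition you are trying to prove.

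(iii) Even granting $\JJ=\Rot\boldsymbol{\Phi}$ with $\boldsymbol{\Phi}\in\HH^{1}(U)$, the identification $\Rot_{\Gamma}(\boldsymbol{\Phi}\cdot\nn)=\jj_{0}$ is false: by the expansion quoted in Section~5 of this paper, the tangential part of $\Rot\boldsymbol{\Phi}$ on $\Gamma$ equals $\Rot_{\Gamma}(\boldsymbol{\Phi}\cdot\nn)-\left([\nabla_{\Gamma}\nn]\boldsymbol{\Phi}\right)\wedge\nn-\left(\partial\boldsymbol{\Phi}/\partial\nn\right)\wedge\nn$, so curvature and normal-derivative terms survive and $v:=\boldsymbol{\Phi}\cdot\nn|_{\Gamma}$ is not a potential for $\jj_{0}$. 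Moreover, on a merely Lipschitz surface $\nn$ is only in $\LL^{\infty}(\Gamma)$, so the product $\boldsymbol{\Phi}\cdot\nn$ need not lie in $H^{\frac{1}{2}}(\Gamma)$ even though $\boldsymbol{\Phi}|_{\Gamma}\in\HH^{\frac{1}{2}}(\Gamma)$ --- exactly the obstruction the surrounding section warns about for products with $\nn$. A related, smaller gap is your uniqueness argument: from $\Rot_{\Gamma}v=0$ with $v\in H^{\frac{1}{2}}(\Gamma)$ you invoke invertibility of $\Delta_{\Gamma}$ below the energy space $H^{1}(\Gamma)$, which is not available on Lipschitz surfaces; one should argue directly that $\nabla_{\Gamma}v=0$ forces $v$ constant. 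More globally, your sketch treats $\TT\HH_{\|}^{-\frac{1}{2}}(\Gamma)$ as if it were the naive space of tangential fields with $H^{-\frac{1}{2}}$ components, whereas the whole content of the cited proofs is the calculus of the non-classical trace spaces $\HH_{\|}^{\pm\frac12}$, $\HH_{\perp}^{\pm\frac12}$ and the mapping properties of $\nabla_{\Gamma}$, $\Rot_{\Gamma}$, $\Div_{\Gamma}$, $\rot_{\Gamma}$ between them; without that input the continuity of $\Rot_{\Gamma}:H^{\frac12}(\Gamma)\to\TT\HH_{\|}^{-\frac12}(\Gamma)$ and the closedness of the two summands are not established.
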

The notation $\TT\HH_{\|}^{-\frac{1}{2}}(\Div_{\Gamma},\Gamma)$ recalls the fact that special care has to be taken for the definition of the energy space. 

A natural idea for the transport of the energy trace space is then, instead of \eqref{Prsmooth}, to define
\begin{equation*}
 \begin{array}{rrcl} 
  \Pp_{r}:&\TT\HH_{\|}^{-\frac{1}{2}}(\Div_{\Gamma_{r}},\Gamma_{r})&\longrightarrow& \TT\HH_{\|}^{-\frac{1}{2}}(\Div_{\Gamma},\Gamma)\\
  &\nabla_{\Gamma_{r}}\;p_{r}+\Rot_{\Gamma_{r}}\;q_{r}&\mapsto&\nabla_{\Gamma}\;\Delta_{\Gamma}^{-1}\left(J_{r}\big(\tau_{r}\Delta_{\Gamma_{r}}p_{r}\big)\right)+\Rot_{\Gamma}(\tau_{r}q_{r}).
 \end{array}
\end{equation*}
This is justified by the sequence of isomorphisms
$$
  \begin{array}{ccccccc}
   \mathcal{H}(\Gamma_{r})/\C&\displaystyle{
\stackrel{\Delta_{\Gamma_{\hspace{-.5mm}r}}\;}{\longrightarrow}}&H^{-\frac{1}{2}}_{*}(\Gamma_{r})&
{\longrightarrow}&H^{-\frac{1}{2}}_{*}(\Gamma)&\stackrel{\Delta_{\Gamma}^{-1}}{\longrightarrow}&
\mathcal{H}(\Gamma)/\C\\
   p_{r}&\mapsto&\Delta_{\Gamma_{r}}p_{r}&\mapsto&J_{r}(\tau_{r}\Delta_{\Gamma_{r}}p_{r})&\mapsto&\Delta_{\Gamma}^{-1}\big(J_{r}(\tau_{r}\Delta_{\Gamma_{r}}p_{r})\big).
  \end{array}
$$
The inverse of the transformation $\Pp_{r}$ is given by
\begin{equation*}
  \begin{array}{rrcl} 
  \Pp_{r}^{-1}:&\TT\HH_{\|}^{-\frac{1}{2}}(\Div_{\Gamma},\Gamma)&\longrightarrow& \TT\HH_{\|}^{-\frac{1}{2}}(\Div_{\Gamma_{r}},\Gamma_{r})\\
  &\nabla_{\Gamma}\;p+\Rot_{\Gamma}\;q&\mapsto&\nabla_{\Gamma_{r}}\;\tau_{r}^{-1}\big(\mathcal{L}^{*}(r)\big)^{-1}\Delta_{\Gamma}p+\Rot_{\Gamma_{r}}(\tau_{r}^{-1}q).
  \end{array}
\end{equation*}
In this situation it seems  to be more convenient  to rewrite the operators $\Pp_{r}C^{r}_{\kappa}\Pp_{r}^{-1}$ and $\Pp_{r}M^{r}_{\kappa}\Pp_{r}^{-1}$ as operators acting on the  the scalar fields 
$p^{*}=\Delta_{\Gamma}p\in H^{-\frac{1}{2}}_{*}(\Gamma)$ and 
$q\in H^{\frac{1}{2}}(\Gamma_{r})/\C$ instead of $p$ and $q$. For example, the operator $\Pp_{r}C^{r}_{\kappa}\Pp_{r}^{-1}$ is defined for 
$\jj=\nabla_{\Gamma}\Delta_{\Gamma}^{-1}p^{*}+\Rot_{\Gamma}q\in \TT\HH_{\|}^{-\frac{1}{2}}(\Div_{\Gamma},\Gamma)$ by
$$
\Pp_{r}C^{r}_{\kappa}\Pp_{r}^{-1}=\nabla_{\Gamma}\Delta_{\Gamma}^{-1}P^{*}(r)+\Rot_{\Gamma}Q(r),
$$ 
with
\begin{equation*}
\begin{array}{lcl}P(r)&=&-\kappa\;\mathcal{R}^{*}(r)\left(\tau_{r}V^{r}_{\kappa}\tau_{r}^{-1}\right)\left[\mathcal{G}(r)\big(\mathcal{L}^{*}(r)\big)^{-1}p^{*}+\boldsymbol{\mathcal{R}}(r)q\right]\end{array}
\end{equation*}
and
\begin{equation*}
\begin{array}{lcl}Q(r)&=&-\kappa\;(\mathcal{L}^{*}(r))^{-1}\mathcal{D}^{*}(r)\pi(r)\left(\tau_{r}V^{r}_{\kappa}\tau_{r}^{-1}\right)\left[\mathcal{G}(r)\big(\mathcal{L}^{*}(r)\big)^{-1}p^{*}+\boldsymbol{\mathcal{R}}(r)q\right]
\vspace{3mm}\\
&&+\dfrac{1}{\kappa}\left(\tau_{r}V^{r}_{\kappa}\tau_{r}^{-1}\right)\left(J_{r}^{-1}p^{*}\right).
\end{array}
\end{equation*}
Here we have used the same notation for the surface differential operators as introduced in Section~\ref{SurfDiffOp}. These formulas together with similar ones for the operator $\Pp_{r}M_{\kappa}\Pp_{r}^{-1}$ can now be the starting point for generalization of the analysis of shape differentiability of the Maxwell boundary integral operators to Lipschitz domains. We expect that the results for the differentiability of the surface differential operators and then also of the boundary integral operators will be similar to what we have obtained for the case of smooth domains. This is, however, far from trivial and will require further work.

%

\begin{thebibliography}{10}

\bibitem{BuffaCiarlet}
{\sc A.~Buffa and P.~Ciarlet, Jr.}, {\em On traces for functional spaces
  related to {M}axwell's equations. {I}. {A}n integration by parts formula in
  {L}ipschitz polyhedra}, Math. Methods Appl. Sci., 24 (2001), pp.~9--30.

\bibitem{BuffaCiarlet2}
\leavevmode\vrule height 2pt depth -1.6pt width 23pt, {\em On traces for
  functional spaces related to {M}axwell's equations. {II}. {H}odge
  decompositions on the boundary of {L}ipschitz polyhedra and applications},
  Math. Methods Appl. Sci., 24 (2001), pp.~31--48.

\bibitem{BuffaCostabelSchwab}
{\sc A.~Buffa, M.~Costabel, and C.~Schwab}, {\em Boundary element methods for
  {M}axwell's equations on non-smooth domains}, Numer. Math., 92 (2002),
  pp.~679--710.

\bibitem{BuffaCostabelSheen}
{\sc A.~Buffa, M.~Costabel, and D.~Sheen}, {\em On traces for {${\bf H}({\bf
  curl},\Omega)$} in {L}ipschitz domains}, J. Math. Anal. Appl., 276 (2002),
  pp.~845--867.

\bibitem{BuffaHiptmairPetersdorffSchwab}
{\sc A.~Buffa, R.~Hiptmair, T.~von Petersdorff, and C.~Schwab}, {\em Boundary
  element methods for {M}axwell transmission problems in {L}ipschitz domains},
  Numer. Math., 95 (2003), pp.~459--485.

\bibitem{ColtonKress}
{\sc D.~Colton and R.~Kress}, {\em Inverse acoustic and electromagnetic
  scattering theory}, vol.~93 of Applied Mathematical Sciences,
  Springer-Verlag, Berlin, second~ed., 1998.

\bibitem{MartC}
{\sc M.~Costabel}, {\em Boundary integral operators on {L}ipschitz domains:
  elementary results}, SIAM J. Math. Anal., 19 (1988), pp.~613--626.

\bibitem{CostabelLeLouer2}
{\sc M.~Costabel and F.~Le~Lou{\"e}r}, {\em On the {K}leinman-{M}artin integral
  equation method for the electromagnetic scattering problem by a dielectric
  body}, SIAM J. Appl. Math, 71 (2011), pp.~635--656.

\bibitem{CostabelLeLouer}
\leavevmode\vrule height 2pt depth -1.6pt width 23pt, {\em Shape derivatives of
  boundary integral operators in electromagnetic scattering. {P}art {I}: Shape
  differentiability of pseudo-homogeneous boundary integral operators},
  (2011).

\bibitem{delaBourdonnaye}
{\sc A.~de~La~Bourdonnaye}, {\em D\'ecomposition de {$H\sp {-1/2}\sb {\rm
  div}(\Gamma)$} et nature de l'op\'erateur de {S}teklov-{P}oincar\'e du
  probl\`eme ext\'erieur de l'\'electromagn\'etisme}, C. R. Acad. Sci. Paris
  S\'er. I Math., 316 (1993), pp.~369--372.

\bibitem{DelfourZolesio}
{\sc M.~C. Delfour and J.-P. Zol{\'e}sio}, {\em Shapes and geometries}, vol.~4
  of Advances in Design and Control, Society for Industrial and Applied
  Mathematics (SIAM), Philadelphia, PA, 2001.
\newblock Analysis, differential calculus, and optimization.

\bibitem{DelfourZolesio2}
{\sc M.~C. Delfour and J.-P. Zol{\'e}sio}, {\em Tangential calculus and shape
  derivatives}, vol.~216 of Lecture Notes in Pure and Appl. Math., Dekker, New
  York, 2001.

\bibitem{Hadamard}
{\sc J.~Hadamard}, {\em Sur quelques questions du calcul des variations}, Ann.
  Sci. \'Ecole Norm. Sup. (3), 24 (1907), pp.~203--231.

\bibitem{HadarKress}
{\sc H.~Haddar and R.~Kress}, {\em On the {F}r\'echet derivative for obstacle
  scattering with an impedance boundary condition}, SIAM J. Appl. Math., 65
  (2004), pp.~194--208 (electronic).

\bibitem{PierreHenrot}
{\sc A.~Henrot and M.~Pierre}, {\em Variation et optimisation de formes},
  vol.~48 of Math\'ematiques \& Applications (Berlin) [Mathematics \&
  Applications], Springer, Berlin, 2005.
\newblock Une analyse g{\'e}om{\'e}trique. [A geometric analysis].

\bibitem{Hettlich}
{\sc F.~Hettlich}, {\em Fr\'echet derivatives in inverse obstacle scattering},
  Inverse Problems, 11 (1995), pp.~371--382.

\bibitem{HettlichErra}
\leavevmode\vrule height 2pt depth -1.6pt width 23pt, {\em Erratum: ``{F}rechet
  derivatives in inverse obstacle scattering'' [{I}nverse {P}roblems {\bf 11}
  (1995), no. 2, 371--382; {MR}1324650 (95k:35217)]}, Inverse Problems, 14
  (1998), pp.~209--210.

\bibitem{HettlichRundell}
{\sc F.~Hettlich and W.~Rundell}, {\em A second degree method for nonlinear
  inverse problems}, SIAM J. Numer. Anal., 37 (2000), pp.~587--620
  (electronic).

\bibitem{Hohage}
{\sc T.~Hohage}, {\em Iterative Methods in Inverse Obstacle Scattering:
  Regularization Theory of Linear and Nonlinear Exponentially Ill-Posed
  Problems}, {PhD} in {N}umerical analysis, University of Linz, 1999.

\bibitem{HsiaoWendland}
{\sc G.~C. Hsiao and W.~L. Wendland}, {\em Boundary integral equations},
  vol.~164 of Applied Mathematical Sciences, Springer-Verlag, Berlin, 2008.

\bibitem{Kirsch}
{\sc A.~Kirsch}, {\em The domain derivative and two applications in inverse
  scattering theory}, Inverse Problems, 9 (1993), pp.~81--96.

\bibitem{Kress}
{\sc R.~Kress}, {\em Electromagnetic waves scattering : Scattering by
  obstacles}, Scattering,  (2001), pp.~191--210.
\newblock Pike, E. R. and Sabatier, P. C., eds., Academic Press, London.

\bibitem{KressPaivarinta}
{\sc R.~Kress and L.~P{\"a}iv{\"a}rinta}, {\em On the far field in obstacle
  scattering}, SIAM J. Appl. Math., 59 (1999), pp.~1413--1426 (electronic).

\bibitem{FLL}
{\sc F.~Le~Lou{\"e}r}, {\em Optimisation de formes d'antennes lentilles
  int{\'e}gr{\'e}es aux ondes millim{\'e}trique}, {PhD} in {N}umerical
  {A}nalysis, Universit{\'e} de Rennes 1, 2009.
\newblock \newline \texttt{http://tel.archives-ouvertes.fr/tel-00421863/fr/}.

\bibitem{LeugeringetalAMOptim11}
{\sc G.~Leugering, A.~A. Novotny, G.~P. Menzala, and J.~Soko{\l}owski}, {\em On
  shape optimization for an evolution coupled system}, Appl. Math. Optim. 64,
  (2011), pp.~441--466.

\bibitem{MartinOla}
{\sc P.~A. Martin and P.~Ola}, {\em Boundary integral equations for the
  scattering of electromagnetic waves by a homogeneous dielectric obstacle},
  Proc. Roy. Soc. Edinburgh Sect. A, 123 (1993), pp.~185--208.

\bibitem{Monk}
{\sc P.~Monk}, {\em Finite element methods for {M}axwell's equations},
  Numerical Mathematics and Scientific Computation, Oxford University Press,
  New York, 2003.

\bibitem{Nedelec}
{\sc J.-C. N{\'e}d{\'e}lec}, {\em Acoustic and electromagnetic equations},
  vol.~144 of Applied Mathematical Sciences, Springer-Verlag, New York, 2001.
\newblock Integral representations for harmonic problems.

\bibitem{Potthast2}
{\sc R.~Potthast}, {\em Fr\'echet differentiability of boundary integral
  operators in inverse acoustic scattering}, Inverse Problems, 10 (1994),
  pp.~431--447.

\bibitem{Potthast3}
\leavevmode\vrule height 2pt depth -1.6pt width 23pt, {\em Domain derivatives
  in electromagnetic scattering}, Math. Methods Appl. Sci., 19 (1996),
  pp.~1157--1175.

\bibitem{Potthast1}
\leavevmode\vrule height 2pt depth -1.6pt width 23pt, {\em Fr\'echet
  differentiability of the solution to the acoustic {N}eumann scattering
  problem with respect to the domain}, J. Inverse Ill-Posed Probl., 4 (1996),
  pp.~67--84.

\bibitem{Potthast4}
\leavevmode\vrule height 2pt depth -1.6pt width 23pt, {\em
  Fr{\'e}chet-Differenzierbarkeit von Randintegraloperatoren und
  Randwertproblemen zur Helmholtzgleichung und den zeitharmonischen
  Maxwellgleichungen}, {PhD} in {N}umerical {A}nalysis, G{\"o}ttingen, 1996.

\bibitem{Schwartz}
{\sc J.~T. Schwartz}, {\em Nonlinear functional analysis}, Gordon and Breach
  Science Publishers, New York, 1969.
\newblock Notes by H. Fattorini, R. Nirenberg and H. Porta, with an additional
  chapter by Hermann Karcher, Notes on Mathematics and its Applications.

\bibitem{Zolesio}
{\sc J.~Soko{\l}owski and J.-P. Zol{\'e}sio}, {\em Introduction to shape
  optimization}, vol.~16 of Springer Series in Computational Mathematics,
  Springer-Verlag, Berlin, 1992.
\newblock Shape sensitivity analysis.

\end{thebibliography}

\end{document}